\documentclass[a4paper, reqno, 10pt]{amsart}
	\usepackage{hyperref}
	\usepackage[usenames,dvipsnames]{color}
	\usepackage{amsthm,amsfonts,amssymb,amsmath,amsxtra, array}
	\usepackage[utf8]{inputenc}
	\usepackage{comment}
	\usepackage{enumitem}
	\usepackage[all]{xy}
	\usepackage{xr-hyper}
	\usepackage{verbatim}
	\usepackage{tikz-cd, epic}
	\usepackage[margin=1.25in]{geometry}
	\usepackage{mathrsfs}
	\usepackage{bbm}
	\usepackage{float}
	\usepackage{bigints}
	
	%
	\newtheorem{theorem}{Theorem}
	\newtheorem{proposition}[theorem]{Proposition}
	\newtheorem{lemma}[theorem]{Lemma}
	
	\newtheorem{corollary}[theorem]{Corollary}
	
	\newtheorem*{theorema}{Theorem A}
	\newtheorem*{theoremb}{Theorem B}
    \newtheorem*{corthma}{Corollary to Theorem A}
	\theoremstyle{definition}

	\newtheorem{remark}[theorem]{Remark}

	\newtheorem*{corollary to thm}{Corollary to Theorem 0.5}

	\numberwithin{equation}{section}
	\numberwithin{theorem}{section}
	
	\setcounter{tocdepth}{1}

	\title[Integrality of locally algebraic representations of $\mathrm{GL}_{2}(D)$]{On the integrality of locally algebraic representations of $\mathrm{GL}_{2}(D)$}
	\author{Santosh Nadimpalli}
	\address{Department of Mathematics and Statistics,
		Indian Institute of Technology Kanpur, Kanpur - 208016, India}
	\email{nsantosh@iitk.ac.in}
	\author{Mihir Sheth}
	\address{Department of Mathematics, Indian Institute of Science, Bangalore - 560012, India}
	\email{mihirsheth@iisc.ac.in}
	\date{}
	\subjclass[2020]{11F70, 22E50}
	
	\begin{document}
	
	\begin{abstract}
	
          \noindent Emerton's theory of Jacquet modules for locally analytic representations provides necessary conditions for the existence of integral structures in locally analytic representations. These conditions are also expected to be sufficient for the integrality of generic irreducible locally algebraic representations. In this article, we prove the sufficiency of Emerton's conditions for some tamely ramified locally algebraic representations of $\mathrm{GL}_{2}(D)$ where $D$ is a $p$-adic division algebra.
	\end{abstract}
	
	\maketitle
	
	\section{Introduction}
	
	Let $p$ be a prime number, $F$ be a finite extension of
        $\mathbb{Q}_{p}$ with residue field $\mathbb{F}_{q}$
        and uniformizer $\varpi_{F}$, and let $E$ be a large enough
        finite extension of $F$. Let $G$ be the group of rational points of a connected reductive group over $F$ and $\pi=\pi_{sm}\otimes\pi_{alg}$
        be an irreducible locally algebraic representation of $G$
        over $E$. The question of the existence of a
        $G$-invariant lattice or a $G$-integral structure
        in $\pi$ is of fundamental interest to the $p$-adic Langlands program.
	
	Emerton gives necessary conditions for the existence of integral structures in terms of the \emph{exponents} of Jacquet modules of locally algebraic representations. Let $P=M_{P}N_{P}\subseteq G$ be a parabolic subgroup with the modulus character $\delta_{P}$ and let $J_{P}$ denote Emerton's locally analytic Jacquet module functor. If $\pi$ admits a $G$-integral structure, then for every parabolic $P$ of $G$ and $\chi$ such that $\mathrm{Hom}_{Z(M_{P})}(\chi,J_{P}(\pi))\neq 0$,
	\begin{equation}\label{emerton_condition}
	    \text{$(\delta_{P}^{-1}\chi)(z)$ is integral in $E$},
	\end{equation} for all $z\in Z(M_{P})^{+}$ where $Z(M_{P})^{+}$ is the contracting monoid in the center $Z(M_{P})$ of the Levi factor $M_{P}$ \cite[Lemma 4.4.2]{eme06}. The characters $\chi$ of $Z(M_{P})$ occurring in $J_{P}(\pi)$ are called exponents. It is expected that the condition \eqref{emerton_condition} is also sufficient for the existence of an integral structure in $\pi$ when $\pi_{sm}$ is \emph{generic}. For $G=\mathrm{GL}_{n}(F)$, this is equivalent to Breuil-Schneider conjecture (see Hu \cite{hu09}). Note that for $P=G$, the condition \eqref{emerton_condition} says that the central character of $\pi$ is integral. When $\pi$ has integral central character and $\pi_{sm}$ is essentially square-integrable, the Jacquet modules $J_{P}(\pi)$ for proper parabolic $P$ always satisfy \eqref{emerton_condition}. In this situation, Sorensen showed using global methods that the integrality of the central character is sufficient for $\pi$ to have an integral structure \cite{sor13}. On the other hand, when $\pi_{sm}$ is a principal series representation, the Jacquet modules are no longer simple and one requires further conditions on $\pi$ whose sufficiency is not easy to prove. There are only partial results available, even for $\mathrm{GL}_{2}(F)$, when $\pi_{sm}$ is an unramified principal series and the weights of $\pi_{alg}$ are small \cite{di13, ass21} or when $\pi=\pi_{sm}$ is a tamely ramified principal series \cite{vig08}. For general split reductive groups and $F=\mathbb{Q}_{p}$, Gro\ss e-Kl\"{o}nne has constructed integral structures in unramified smooth principal series representations under some technical hypothesis \cite{gk14}.   
	
	In this article, we consider the non-quasi-split group $G=\mathrm{GL}_{2}(D)$ where $D$ is a central $F$-division algebra of dimension $d^{2}$ and show that \eqref{emerton_condition} is sufficient for the existence of integral structures in some tamely ramified irreducible locally algebraic representations of $G$. Let us spell out \eqref{emerton_condition} for representations of $G=\mathrm{GL}_{2}(D)$ admitting integral structures. Let $B=TN$ be the minimal parabolic subgroup of $G$ consisting of upper triangular matrices. One has $\delta_{B}(z)=q^{-d^{2}}$ for $z=\left(\begin{smallmatrix} \varpi_{F} & 0 \\ 0 & 1\end{smallmatrix}\right)\in Z(T)^{+}$. Denote by $\pi(\underline{\lambda})$ the irreducible algebraic representation of $\mathrm{GL}_{2d}(\overline{F})$ with highest weight $\underline{\lambda}=(\lambda_{1},\ldots,\lambda_{2d})$ and by $\chi(\underline{\lambda})$ the character $(t_{1},\ldots,t_{2d})\mapsto t_{1}^{\lambda_{1}}\ldots t_{2d}^{\lambda_{2d}}$ of its diagonal torus.
	
	If $\pi=\mathrm{Ind}_{B}^{G}(\tau_{1}\otimes\tau_{2})\otimes\pi(\underline{\lambda})$ is a locally algebraic principal series representation, then \[J_{B}(\pi)\cong(\mathrm{Ind}_{B}^{G}(\tau_{1}\otimes\tau_{2}))_{N}\otimes\pi(\underline{\lambda})^{N}\] and \[(\mathrm{Ind}_{B}^{G}(\tau_{1}\otimes\tau_{2}))_{N}^{ss}\cong(\tau_{1}\otimes\tau_{2})\oplus((\tau_{2}\otimes\tau_{1})\otimes\delta_{B}).\] Denoting the central characters of representations $?$ by $\omega_{?}$, the exponents $\chi$ of $J_{B}(\pi)$ are \begin{equation*}
	\text{$(\omega_{\tau_{1}}\otimes\omega_{\tau_{2}})\chi(\underline{\lambda})$ and $(\omega_{\tau_{2}}\otimes\omega_{\tau_{1}})\delta_{B}\chi(\underline{\lambda})$.}\end{equation*}
	Thus, if $\pi$ has an integral structure, \eqref{emerton_condition} says that \begin{align}\label{emerton_condition_ps}
	    &\text{$(\omega_{\tau_{1}}\omega_{\tau_{2}})(\varpi_{F})\varpi_{F}^{\sum_{i=1}^{2d}\lambda_{i}}$is an integral unit (the integrality of $\omega_{\pi}$), and} \nonumber \\& \hspace{8mm}\text{$q^{d^{2}}\omega_{\tau_{1}}(\varpi_{F})\varpi_{F}^{\sum_{i=1}^{d}\lambda_{i}}$ and $\omega_{\tau_{2}}(\varpi_{F})\varpi_{F}^{\sum_{i=1}^{d}\lambda_{i}}$ are integral in $E$}.
	\end{align}
	Our first main result shows that the conditions \eqref{emerton_condition_ps} are sufficient for the existence of an integral structure in $\pi$ if the smooth principal series  $\mathrm{Ind}_{B}^{G}(\tau_{1}\otimes\tau_{2})$ is tamely ramified and $\pi(\underline{\lambda})$ is trivial, i.e., $\underline{\lambda}=\underline{0}$:
	\begin{theorema}[Theorem \ref{main_thm_for_prin_ser}]\label{theorema}
		The integrality conditions \eqref{emerton_condition_ps} of Emerton are sufficient for the existence of an integral structure in a smooth tamely ramified principal series representation $\mathrm{Ind}_{B}^{G}(\tau_{1}\otimes\tau_{2})$ of $G$. 
	\end{theorema}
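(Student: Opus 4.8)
The plan is to reduce the existence of a $G$-integral structure to a finite computation with a module over an Iwahori–Hecke algebra, exploiting that tamely ramified principal series are depth-zero representations.

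\smallskip
\emph{Reductions.} Since $\tau_1,\tau_2$ are tamely ramified, they are automatically unitary on $\mathcal{O}_D^\times$, so that the only possible failure of integrality is concentrated in the values $\tau_i(\varpi_D)$. Twisting $\pi$ by $\chi\circ\mathrm{Nrd}$ for an unramified character $\chi$ of $F^\times$ with $\chi(\varpi_F)\in\mathcal{O}_E^\times$ affects neither the existence of a $G$-integral structure nor the validity of \eqref{emerton_condition_ps}, and after such a twist the hypothesis \eqref{emerton_condition_ps} becomes
\[
v\bigl(\tau_1(\varpi_D)\bigr)+v\bigl(\tau_2(\varpi_D)\bigr)=0,\qquad 0\le v\bigl(\tau_2(\varpi_D)\bigr)\le d\cdot v(q),
\]
where $v$ is the valuation on $E$ normalised by $v(\varpi_F)=1$; in particular \eqref{emerton_condition_ps} forces $v(\tau_2(\varpi_D))\ge 0$. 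If $v(\tau_2(\varpi_D))=0$ the representation is induced from unitary characters and has an obvious $G$-invariant lattice, so we may assume $v(\tau_2(\varpi_D))>0$.

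\smallskip
\emph{Reformulation.} The group $G=\mathrm{GL}_2(D)$ acts on its Bruhat–Tits tree, a $(q^d+1)$-regular tree whose boundary is $B\backslash G\cong\mathbb{P}^1(D)$, and via the Iwasawa decomposition $G=BK_0$ with $K_0=\mathrm{GL}_2(\mathcal{O}_D)$ one has $\pi|_{K_0}\cong\mathrm{Ind}_{B\cap K_0}^{K_0}(\tau_1\otimes\tau_2)$. The obvious $K_0$-stable lattice (sections taking $\mathcal{O}_E$-values on $K_0$) is \emph{not} $G$-stable: its $G$-span is unbounded, the obstruction coming from the translations contracting $\mathbb{P}^1(D)$ toward the $B$-fixed end of the tree, along which the non-integral character $\tau_1$ acts. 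Since $G$ is generated by $K_0$ and $t=\mathrm{diag}(\varpi_D,1)$, what is needed is a $K_0$-stable lattice $L$ with $tL=L$; to make this tractable we pass to the finite-dimensional space $\mathcal{M}=\pi^{I(1)}$ of invariants under a pro-$p$ Iwahori subgroup $I(1)\subset K_0$. Being depth zero, $\pi$ is generated over $E[G]$ by $\mathcal{M}$, and $\mathcal{M}$ is a module over the pro-$p$ Iwahori–Hecke algebra $\mathcal{H}_E$ of $G$. The step I expect to require the most care is the claim that $\pi$ admits a $G$-stable $\mathcal{O}_E$-lattice if and only if $\mathcal{M}$ admits an $\mathcal{H}_{\mathcal{O}_E}$-stable $\mathcal{O}_E$-lattice $M$: given such an $M$, the submodule $\mathcal{O}_E[G]\cdot M\subset\pi$ is the desired lattice, the verification resting on the exactness of $E[G]\otimes_{\mathcal{H}_E}(-)$ on depth-zero standard modules, on the identification $E[G]\otimes_{\mathcal{H}_E}\mathcal{M}\xrightarrow{\ \sim\ }\pi$, and on the computation $(\mathcal{O}_E[G]\cdot M)^{I(1)}=M$; for the non-quasi-split group $\mathrm{GL}_2(D)$ this is the analogue of the mechanism known for $\mathrm{GL}_2(\mathbb{Q}_p)$.

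\smallskip
\emph{Computation.} The module $\mathcal{M}$ has $E$-dimension two, with a basis indexed by the two Bruhat cells, and the Hecke operators attached to the double cosets $I(1)\,t^{\pm1}\,I(1)$ act on it with explicit eigenvalues which are $\tau_1(\varpi_D)^{\pm1}$ and $\tau_2(\varpi_D)^{\pm1}$ rescaled by the powers of $q^d$ dictated by $\delta_B$. A direct valuation analysis shows that a lattice in $\mathcal{M}$ stable under these operators — equivalently, an $\mathcal{H}_{\mathcal{O}_E}$-stable lattice — exists precisely when $v(\tau_1(\varpi_D))+v(\tau_2(\varpi_D))=0$ and $0\le v(\tau_2(\varpi_D))\le d\cdot v(q)$, that is, precisely when \eqref{emerton_condition_ps} holds, the inequality $v(\tau_2(\varpi_D))\ge 0$ corresponding to integrality of $\omega_{\tau_2}(\varpi_F)$ and the inequality $v(\tau_2(\varpi_D))\le d\cdot v(q)$ to integrality of $q^{d^2}\omega_{\tau_1}(\varpi_F)$. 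Together with the equivalence of the previous paragraph this gives the theorem. One further point to handle along the way is the reducible case, in which $\pi$ has two Jordan–Hölder constituents and $\mathcal{M}$ decomposes accordingly, so that a lattice compatible with both constituents must be produced; the valuation computation above is, however, insensitive to the decomposition.
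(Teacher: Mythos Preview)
Your proposal has a concrete error that collapses the argument: you claim $\mathcal{M}=\pi^{I(1)}$ has $E$-dimension two. For $G=\mathrm{GL}_2(D)$ with $D$ of index $d>1$, the tamely ramified irreducible representations $\tau_i$ of $D^\times$ need not be characters; they have the form $\mathrm{Ind}_{D(1)D_{d'}^\times}^{D^\times}\theta$ of dimension $d'$ for any divisor $d'\mid d$. Writing $W_i$ for the space of $\tau_i$, one has
\[
\pi^{I(1)}\;\cong\;(W_1\otimes W_2)\oplus(W_1\otimes W_2),
\]
indexed by the two Bruhat cells, so $\dim_E\mathcal{M}=2\dim_E(\tau_1)\dim_E(\tau_2)$. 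In particular $\tau_i(\varpi_D)$ is a linear operator, not a scalar, and phrases like ``$v(\tau_2(\varpi_D))$'' or ``eigenvalues $\tau_1(\varpi_D)^{\pm1}$'' are not well-defined as stated. Your computation therefore only covers the case where both $\tau_i$ are characters, which is essentially the $\mathrm{GL}_2(F)$ situation already handled by Vign\'eras; the new content for $\mathrm{GL}_2(D)$ lies precisely in controlling the higher-dimensional $W_1\otimes W_2$.

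Even after correcting the dimension, the step you flag as ``requiring the most care'' is in fact the entire proof. The implication from an $\mathcal{H}_{\mathcal{O}_E}$-stable lattice $M\subset\mathcal{M}$ to a $G$-stable lattice in $\pi$ amounts to showing $(\mathcal{O}_E[G]\cdot M)^{I(1)}=M$, i.e.\ that the zig-zag sequence $L_1^{(i)}$ of Corollary~\ref{zigzag_criterion} stabilises at the first step. The paper does not obtain this from any general exactness or categorical statement; it is proved by an explicit computation. One chooses a $T_0$-lattice $\mathcal{L}\subset W_1\otimes W_2$ satisfying $(\mathrm{Id}\otimes\tau_2(\varpi_D))\mathcal{L}\subset\mathcal{L}$ and $q^d(\tau_1(\varpi_D)\otimes\mathrm{Id})\mathcal{L}\subset\mathcal{L}$ (this is where the hypotheses enter), sets $L_1=f^1_{\mathcal{L}}\oplus f^s_{\mathcal{L}+(\tau_1(\varpi_D)\otimes\mathrm{Id})\mathcal{L}}$, and then verifies $(\mathfrak{K}_0\cdot L_1)^{I(1)}=L_1$ by diagonalising the operators $\xi_\lambda=\tau_1(\lambda)\otimes\tau_2(\lambda^{-1})$ on an $\mathcal{O}$-basis of $\mathcal{L}+(\tau_1(\varpi_D)\otimes\mathrm{Id})\mathcal{L}$ and applying a Fourier-transform argument on $\mathbb{F}_{q^d}$ to control the coefficients. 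Your ``direct valuation analysis'' would need to reproduce this, and there is no shortcut: the exactness of $E[G]\otimes_{\mathcal{H}_E}(-)$ you invoke does not by itself bound $\mathcal{O}_E[G]\cdot M$.
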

	
	We remark that the principal series $\mathrm{Ind}_{B}^{G}(\tau_{1}\otimes\tau_{2})$ is not required to be irreducible in Theorem A. However, the conditions \eqref{emerton_condition_ps} are no longer sufficient for a reducible principal series tensored with a non-trivial algebraic representation (see erratum to \cite{vig08}).
	
	A principal series of the form $\mathrm{Ind}_{B}^{G}((\tau\otimes\tau)\otimes\delta_{B}^{\frac{d-a}{2d}})$ is reducible with unique irreducible quotient $\mathrm{St}(\tau)$ and unique irreducible submodule $\mathrm{Sp}(\tau)$. Here $a$ is the length of the segment that determines the Jacquet-Langlands lift of the irreducible $D^{\times}$-representation $\tau$. If $\pi=\mathrm{St}(\tau)\otimes\pi(\underline{\lambda})$ (resp. $\mathrm{Sp}(\tau)\otimes\pi(\underline{\lambda})$), then \[J_{B}(\pi)\cong((\tau\otimes\tau)\otimes\delta_{B}^{\frac{d+a}{2d}})\otimes\pi(\underline{\lambda})^{N}\hspace{3mm}(\text{resp.}\hspace{2mm}((\tau\otimes\tau)\otimes\delta_{B}^{\frac{d-a}{2d}})\otimes\pi(\underline{\lambda})^{N}).\] The exponent $\chi$ in $J_{B}(\pi)$ is $(\omega_{\tau}\otimes\omega_{\tau})\delta_{B}^{\frac{d+a}{2d}}\chi(\underline{\lambda})$ (resp. $(\omega_{\tau}\otimes\omega_{\tau})\delta_{B}^{\frac{d-a}{2d}}\chi(\underline{\lambda})$). Hence, if $\pi$ has an integral structure, then \eqref{emerton_condition} says that \begin{align}\label{emerton_condition_st_sp}
	    &\text{$\omega_{\tau}^{2}(\varpi_{F})\varpi_{F}^{\sum_{i=1}^{2d}\lambda_{i}}$ is an integral unit and $q^{\frac{d(d-a)}{2}}\omega_{\tau}(\varpi_{F})\varpi_{F}^{\sum_{i=1}^{d}\lambda_{i}}$}\nonumber \\& \text{(resp. $q^{\frac{d(d+a)}{2}}\omega_{\tau}(\varpi_{F})\varpi_{F}^{\sum_{i=1}^{d}\lambda_{i}}$) is integral in $E$.}
	\end{align}
	The first part of \eqref{emerton_condition_st_sp}, i.e., the integrality of $\omega_{\pi}$ implies that $\mathrm{val}_{E}(\omega_{\tau}(\varpi_{F}))=\frac{-1}{2}\sum_{i=1}^{2d}\lambda_{i}$. Thus $\mathrm{val}_{E}(\omega_{\tau}(\varpi_{F}))+\sum_{i=1}^{d}\lambda_{i}=\frac{1}{2}(\sum_{i=1}^{d}\lambda_{i}-\sum_{i=d+1}^{2d}\lambda_{i})\geq 0$. As $d-a\geq 0$, we see that the second part of \eqref{emerton_condition_st_sp} is redundant as it is implied by the first part. Hence, in this case, the integrality of $\omega_{\pi}$ is conjecturally sufficient for the existence of an integral structure in $\pi$. The sufficiency follows from Theorem \ref{main_thm_for_prin_ser} when  $\mathrm{St}(\tau)$ (resp. $\mathrm{Sp}(\tau)$) is tamely ramified and $\pi(\underline{\lambda})$ is trivial:
	
	\begin{corthma}[Theorem \ref{main_thm_for_st_and_sp}]
 A tamely ramified Steinberg representation $\mathrm{St}(\tau)$ with integral central character admits an integral structure. A tamely ramified Speh representation $\mathrm{Sp}(\tau)$ with integral central character admits an integral structure.
	\end{corthma}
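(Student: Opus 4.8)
The plan is to realize $\mathrm{Sp}(\tau)$ and $\mathrm{St}(\tau)$ as the two Jordan--H\"older constituents of a single principal series and to apply Theorem \ref{main_thm_for_prin_ser}. Put $P(\tau):=\mathrm{Ind}_{B}^{G}((\tau\otimes\tau)\otimes\delta_{B}^{\frac{d-a}{2d}})$, the reducible principal series having $\mathrm{Sp}(\tau)$ as its unique irreducible submodule and $\mathrm{St}(\tau)$ as its unique irreducible quotient. First I would observe that $\delta_{B}^{\frac{d-a}{2d}}$ is an unramified character of $T$: it factors through the valuations of the reduced norms on the two factors of $T=D^{\times}\times D^{\times}$. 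Twisting by an unramified character does not change the conductor, so $P(\tau)$ is a tamely ramified principal series --- reducible or not --- as soon as $\tau$ is tamely ramified, and Theorem \ref{main_thm_for_prin_ser} becomes applicable to $P(\tau)$ once the hypothesis \eqref{emerton_condition_ps} is checked for it.

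To verify \eqref{emerton_condition_ps} for $P(\tau)$ (with $\underline{\lambda}=\underline{0}$), I would use the description of $(P(\tau))_{N}^{ss}$ recalled above: the two $Z(T)$-exponents of $J_{B}(P(\tau))$ are $(\omega_{\tau}\otimes\omega_{\tau})\delta_{B}^{\frac{d-a}{2d}}$ and $(\omega_{\tau}\otimes\omega_{\tau})\delta_{B}^{\frac{d+a}{2d}}$, i.e. exactly the exponents of $J_{B}(\mathrm{Sp}(\tau))$ and of $J_{B}(\mathrm{St}(\tau))$ listed before the statement. Hence \eqref{emerton_condition_ps} for $P(\tau)$ amounts to the conjunction of the two conditions \eqref{emerton_condition_st_sp} for $\mathrm{Sp}(\tau)$ and for $\mathrm{St}(\tau)$, namely that $\omega_{\tau}^{2}(\varpi_{F})$ is an integral unit and that $q^{\frac{d(d+a)}{2}}\omega_{\tau}(\varpi_{F})$ and $q^{\frac{d(d-a)}{2}}\omega_{\tau}(\varpi_{F})$ are integral. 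Now $\mathrm{St}(\tau)$ and $\mathrm{Sp}(\tau)$, being constituents of $P(\tau)$, share its central character, which is $\omega_{\tau}^{2}$ on $Z(G)$; the hypothesis that it is integral therefore forces $\mathrm{val}_{E}(\omega_{\tau}(\varpi_{F}))=0$, and since $0\le a\le d$ the two $q$-power conditions then hold automatically. Thus Theorem \ref{main_thm_for_prin_ser} furnishes a $G$-invariant integral structure $L\subseteq P(\tau)$.

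It remains to pass $L$ to the two constituents. For the submodule $\mathrm{Sp}(\tau)$ this is immediate: $L\cap\mathrm{Sp}(\tau)$ is $G$-stable, generates $\mathrm{Sp}(\tau)$ over $E$ (every vector enters $L$ after scaling, because $L$ generates $P(\tau)$), and contains no $E$-line since $L$ does not, so it is an integral structure in $\mathrm{Sp}(\tau)$. For the quotient $\mathrm{St}(\tau)$ one takes the image of $L$ under $P(\tau)\twoheadrightarrow\mathrm{St}(\tau)$; that the image of a $G$-invariant lattice under a surjection of finite-length admissible representations is again a lattice is a standard fact, which one may if desired read off from the submodule case applied to smooth duals, $\mathrm{St}(\tau)^{\vee}$ being an irreducible submodule of $P(\tau)^{\vee}$. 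The whole content of the statement is carried by Theorem \ref{main_thm_for_prin_ser}; the rest is the bookkeeping of the second paragraph --- where the inequality $0\le a\le d$ is precisely what makes \eqref{emerton_condition_ps} a consequence of the integrality of the central character --- together with the standard stability of integral structures under subrepresentations and quotients. The only step that is not purely formal is the passage to the quotient $\mathrm{St}(\tau)$, and even that is routine, so I do not expect a genuine obstacle here.
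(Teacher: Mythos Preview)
Your proposal is correct and follows essentially the same route as the paper. The paper phrases the argument via the normalized induction $\Pi=\tau|\cdot|^{-a/2d}\times\tau|\cdot|^{a/2d}$ and invokes Theorem~\ref{crit_norm_para_ind}, which is just Theorem~\ref{main_thm_for_prin_ser} rewritten for normalized induction; your $P(\tau)$ is the same representation, and your verification of \eqref{emerton_condition_ps} is the same computation. For $\mathrm{St}(\tau)$ the paper passes to the dual exact sequence so that $\mathrm{St}(\tau)$ appears as a subrepresentation, which is exactly the duality reduction you sketch for the quotient case.
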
 

    Our second main result shows the sufficiency of the integrality of $\omega_{\pi}$ for some examples of locally algebraic representations $\pi=\mathrm{St}(\tau)\otimes\pi(\underline{\lambda})$ (resp. $\mathrm{Sp}(\tau)\otimes\pi(\underline{\lambda})$) with non-trivial $\pi(\underline{\lambda}
	)$:
	
	\begin{theoremb}[Theorem \ref{main_thm__for_la_st_and_sp} and Theorem \ref{la_sp_int}]
		\begin{enumerate}
			\item[]
			\item Let $\tau$ be a smooth absolutely irreducible tamely ramified representation of $D^{\times}$ of dimension $\leq 2$. Then the locally algebraic Steinberg representation $\pi=\mathrm{St}(\tau)\otimes\pi(\underline{\lambda})$ of $G$ with integral central character admits an integral structure.
			\item Let $D$ be the quaternionic division algebra and $\tau$ be a smooth absolutely irreducible tamely ramified representation of $D^{\times}$ of dimension $2$. Then the locally algebraic Speh representation $\mathrm{Sp}(\tau)\otimes(\mathrm{Sym}^{1}E^{4}\otimes\mathrm{det}^{-\frac{1}{4}})$ of $G$ admits an integral structure.
		\end{enumerate}
	\end{theoremb}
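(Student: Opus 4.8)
The plan is to treat both parts by the same mechanism: produce a candidate $G$-stable $\mathcal{O}_E$-lattice by combining a $G$-invariant lattice in a \emph{smooth} representation with a lattice in the algebraic one, and then prove that the resulting $\mathcal{O}_E[G]$-module is bounded. First one normalises the central character: since $E$ is large enough, choose an unramified character $\eta\colon G\to E^{\times}$ so that the smooth tamely ramified representation $\mathrm{St}(\tau)\otimes\eta$ (resp.\ $\mathrm{Sp}(\tau)\otimes\eta$) has central character valued in $\mathcal{O}_E^{\times}$ — this is possible precisely because the first part of \eqref{emerton_condition_st_sp} forces $\mathrm{val}_{E}(\omega_{\tau}(\varpi_{F}))=-\tfrac{1}{2}\sum_{i=1}^{2d}\lambda_{i}$ as recorded after \eqref{emerton_condition_st_sp} — and write $\pi=(\mathrm{St}(\tau)\otimes\eta)\otimes W$ with $W=\pi(\underline{\lambda})\otimes\eta^{-1}$, a locally algebraic representation with $W|_{K}=\pi(\underline{\lambda})|_{K}$ for $K=\mathrm{GL}_{2}(\mathcal{O}_D)$ (in the Speh case of Theorem~B no twist is needed, as $\sum_{i}\lambda_{i}=0$). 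By the Corollary to Theorem~A, $\mathrm{St}(\tau)\otimes\eta$ admits a $G$-invariant $\mathcal{O}_E$-lattice $\Theta$; fix also a $K$-invariant lattice $L\subseteq W$. As $\omega_{\pi}$ is an integral unit, $Z(G)$ acts on $M_{0}:=\Theta\otimes_{\mathcal{O}_E}L$ by a scalar in $\mathcal{O}_E^{\times}$, so $M_{0}$ is $Z(G)K$-stable; set
\[
M:=\sum_{g\in Z(G)K\backslash G}g\cdot M_{0}\ \subseteq\ \pi ,
\]
the smallest $\mathcal{O}_E[G]$-submodule of $\pi$ containing $M_{0}$. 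Then $M$ is an integral structure in $\pi$ if and only if it is bounded, equivalently the gauge $v\mapsto\inf\{\,|a|_E : v\in aM\,\}$ is a norm; proving this is the crux.

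The boundedness is checked in the geometric model. Using the Corollary to Theorem~A one may take $\Theta$ to be an explicit lattice of $\tau$-twisted harmonic cochains on the Bruhat--Tits tree $\mathcal{T}$ of $G$ (one-dimensional, of valency $q^{d}+1$); a vector of $\pi$ then becomes a $\tau$-twisted cochain with values in $W$, on which $g\in G$ acts by combining the combinatorial action on $\mathcal{T}$ with the algebraic action on the coefficients. The point is to bound, uniformly in $g$, the coefficient dilation that $g$ induces on the edge at combinatorial distance $n$ from the base edge: this dilation is governed by the pairing of the weight $\underline{\lambda}$ with the part of $g$ translating distance $n$ along an apartment. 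By the Cartan and Iwasawa decompositions one reduces to the action of the diagonal torus $T$ along a single apartment, where the dilation at distance $n$ is exactly $(\delta_{B}^{-1}\chi)(z_{n})$ for suitable $z_{n}\in Z(T)^{+}$ and $\chi$ the unique exponent of $J_{B}(\pi)$ recorded after \eqref{emerton_condition_st_sp}; the integrality of $\omega_{\pi}$ makes every $(\delta_{B}^{-1}\chi)(z)$, $z\in Z(T)^{+}$, integral, so Emerton's condition holds automatically. This is the conceptual reason the sum $M$ should remain bounded, but — as the erratum to \cite{vig08} shows — Emerton's condition by itself does \emph{not} force boundedness once the algebraic part is non-trivial, so the hard part is to upgrade this heuristic to an effective estimate that also controls the smooth $\tau$-twist uniformly along $\mathcal{T}$. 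This is exactly where the hypothesis $\dim\tau\le 2$ enters: a tamely ramified $\tau$ of dimension $\le 2$ is either $\chi\circ\mathrm{Nrd}_{D/F}$ for a tamely ramified character $\chi$, so that the $\tau$-twisted cochain complex is the untwisted Steinberg one up to a single scalar and the estimate reduces to that of the ordinary (locally algebraic) Steinberg representation, or it is a depth-zero representation induced from an explicit character of the units of the unramified quadratic subextension of $D$, for which one can write down a fundamental domain on $\mathcal{T}$ and the relevant intertwiners and make the estimate by hand.

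The Speh assertion is proved along the same lines but does not follow formally from the Steinberg one — the Aubert-type relation between $\mathrm{Sp}(\tau)$ and $\mathrm{St}(\tau)$ is not known to preserve the existence of integral structures — and it is carried out only in the single case where the computation is tractable: $D$ the quaternion algebra ($d=2$, tree of valency $q^{2}+1$), $\tau$ of dimension $2$ and depth zero, and $\pi(\underline{\lambda})=\mathrm{Sym}^{1}E^{4}\otimes\mathrm{det}^{-1/4}$, so that the coefficient system is just the (fractionally twisted) four-dimensional standard representation. In that situation one writes $M_{0}$ out explicitly on the edges in the first two combinatorial shells around the base vertex, verifies directly that all $G$-translates stay inside a fixed dilate of $M_{0}$ on these shells, and propagates the bound to the rest of $\mathcal{T}$ using the contracting action of $T$; the smallness of the weight is precisely what keeps this finite computation manageable. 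In both parts the main obstacle is this boundedness of $M$: the automatic validity of Emerton's condition supplies the necessary numerical input but not the estimate itself, and producing an explicit $G$-invariant gauge on the cochain model is what currently restricts Theorem~B to $\dim\tau\le 2$ and, for the Speh case, to the one small weight above.
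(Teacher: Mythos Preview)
Your proposal outlines a strategy but never executes its central step, and the mechanism you invoke for the restriction $\dim\tau\le 2$ is not the one that actually makes the argument work.

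Concretely: you reduce everything to proving that $M=\sum_{g}g\cdot M_{0}$ is bounded, acknowledge that Emerton's condition alone does not force this, and then assert that for $\dim\tau\le 2$ one ``can write down a fundamental domain \ldots\ and make the estimate by hand.'' No such estimate is provided, nor is it clear what the estimate would be. Even in the one-dimensional case your reduction is to ``the ordinary (locally algebraic) Steinberg representation,'' whose integrality is itself the content of \cite{hu20} and is not reproved here. So the proposal is a plan, not a proof: the difficult boundedness is stated as a goal and then declared achievable without any of the work.

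The paper's route is quite different and explains transparently why $\dim\tau\le 2$ is needed. It works with the diagram $V_{1}=\mathrm{St}(\tau)^{I(1)}\otimes\pi_{alg}\hookrightarrow V_{0}=\mathrm{St}(\tau)^{K(1)}\otimes\pi_{alg}$ and Vign\'eras' zig-zag criterion (Corollary~\ref{zigzag_criterion}). The key fact (Lemma~\ref{I(1)-invariants of St and Sp}(ii)) is that $\mathrm{St}(\tau)^{I(1)}$ is an \emph{irreducible} $\mathfrak{K}_{1}$-representation precisely when $\dim\tau\le 2$; hence $V_{1}$ is irreducible and, by Lemma~\ref{finite_homothey_classes_lemma}, has only finitely many homothety classes of $\mathfrak{K}_{1}$-lattices. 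If the zig-zag sequence failed to stabilize, one would obtain a mod-$\varpi$ diagram with $H_{0}=0$, and then the dimension inequality of Lemma~\ref{h0diagrams} forces $\dim_{E}V_{0}\le\tfrac{q^{d}+1}{2}\dim_{E}V_{1}$; a direct count using Lemmas~\ref{I(1)-invariants of St and Sp} and~\ref{K(1)-invariants of St and Sp} contradicts this. For the Speh case the paper does not attempt any general estimate either: it carries out the zig-zag explicitly with concrete bases of $\mathrm{Sp}(\tau)^{I(1)}$, $\mathrm{Sp}(\tau)^{K(1)}$ and $\mathrm{Sym}^{1}\mathcal{O}^{4}$, and checks that $L_{1}^{(2)}=L_{1}^{(1)}$. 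Your harmonic-cochain/boundedness framework is a reasonable heuristic, but to turn it into a proof you would need precisely the kind of uniform control that the paper obtains through the irreducibility-plus-dimension argument; absent that, the proposal has a genuine gap.
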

	
	Though part (1) of Theorem B is implied by \cite[Theorem B]{sor13}, our proofs of both Theorem A and Theorem B are purely local, unlike the ones in \textit{loc. cit.} We follow methods of Vign\'{e}ras \cite{vig08} and Hu \cite{hu20} based on the theory of coefficient systems (or \emph{diagrams}) on the Bruhat-Tits tree of $G$. The main idea is to use the realization of a locally algebraic representation $\pi$ as the $0$-th homology group of its fixed-point system. The question of finding an integral structure in $\pi$ then amounts to the question of finding a system of lattices in the corresponding fixed-point system of finite-dimensional vector spaces. The analysis gets more involved when $\pi_{sm}$ admits invariants under smaller compact open subgroups and when $\pi_{alg}$ is non-trivial. For example, the proof of integrality in part (2) of Theorem B involves some tedious computations with lattices and the integral structure is obtained at the second iteration of the ``zig-zag" sequence of lattices. For part (1) of Theorem B, a generalization of Hu's argument allows us to treat $\pi$ with \emph{any} non-trivial $\pi_{alg}$ (without explicitly working with $\pi_{alg}$) under a strong assumption on $\mathrm{dim}_{E}(\tau)$ which is necessary (cf. Lemma \ref{I(1)-invariants of St and Sp} (ii)).
	
	The integrality of the locally algebraic Speh representation in part (2) of Theorem B is an interesting phenomenon and is related to the \emph{genericity} of the Speh representation. Note that a Speh representation $\mathrm{Sp}(\tau)$ is finite-dimensional (and thus one-dimensional) if and only if $\tau$ is a character. An infinite-dimensional $\mathrm{Sp}(\tau)$, as the one in part (2) of Theorem B, admits a \emph{non-degenrate Whittaker model} and thus is generic (cf. \cite[Corollary 8.3 and Remark 8.4 (2)]{guy2023representations} or \cite[Theorem 3.1]{pr00}). The tensor product of a one-dimensional Speh representation and a non-trivial irreducible algebraic representation is never integral. However, it is expected that locally algebraic generic Speh representations with integral central characters are integral. The genericity of the smooth part is also important in the formulation of the Breuil-Schneider conjecture which predicts the existence of integral structures in locally algebraic representations
of $\mathrm{GL}_{n}(F)$ \cite[p. 16]{bs07}. As any \emph{infinite-dimensional} smooth irreducible representation of $\mathrm{GL}_{2}(D)$ is generic, our investigations suggest that Emerton's integrality conditions \eqref{emerton_condition} are sufficient for the integrality of any \emph{infinite-dimensional} irreducible locally algebraic representation of $\mathrm{GL}_{2}(D)$. We hope to address this uniformly in future work.
	%
	\\
	
	\noindent {\it Organization}: In \S \ref{section2}, we discuss Vign\'eras' integrality criterion for the representations of $\mathrm{GL}_{2}(D)$. In \S \ref{section3}, we use this criterion to show that Emerton's integrality conditions are sufficient for the integrality of smooth tamely ramified principal series representations. The \S \ref{section4} talks about the integrality of locally algebraic representations of $\mathrm{GL}_{2}(D)$ whose smooth part is either a Steinberg representation or a Speh representation. We show the sufficiency of Emerton's conditions for some locally algebraic Steinberg representations. Then, finally, we illustrate the connection between the integrality of locally algebraic representations and the \emph{genericity} of their smooth part with an example of an infinite-dimensional integral locally algebraic Speh representation. 
	\\
	\\
	\noindent {\it Acknowledgments.} We thank the anonymous referee for several useful comments on an earlier version of this paper. During this work, the first author was supported by the DST-INSPIRE Research Grant of the Govt of India and the second author was supported by the Raman Postdoctoral Fellowship of the Indian Institute of Science, Bangalore. Both authors acknowledge the support that they received. 
	\\
	\\
	{\bf Notation and convention}: 
	
	Let $F$ be a non-archimedean local field of characteristic $0$ with residue field $\mathbb{F}_{q}$ of characteristic $p$. Let $D$ be the central $F$-division algebra of index $d$. Let $\mathcal{O}_{F}\subseteq F$ and $\mathcal{O}_{D}\subseteq D$ denote the respective valuation rings. Fix uniformizers $\varpi_{F}\in\mathcal{O}_{F}$ and $\varpi_{D}\in\mathcal{O}_{D}$. For a divisor $d'$ of $d$, let $F_{d'}$ denote the unramified extension of $F$ of degree $d'$ viewed as a subfield of $D$. Let $|\cdot|_{F}$ and $|\cdot|$ denote the normalized non-archimedean absolute values on $F$ and $D$ respectively such that $|\varpi_{F}|_{F}=q^{-1}$ and $|\varpi_{D}|=q^{-d}$. We have $|\cdot|:=|\cdot|^{d}_{F}\circ\mathrm{Nrd}$ where $\mathrm{Nrd}:D\rightarrow F$ is the reduced norm. Note that $|\varpi_{F}|=q^{-d^{2}}$.
	
	Let $G$ be the group $\mathrm{GL}_{2}(D)$ of units in the matrix algebra $\mathrm{M}_{2}(D)$, $K=\mathrm{GL}_{2}(\mathcal{O}_{D})$ and $I\subseteq K$ denote the standard Iwahori subgroup. We view $D^{\times}$ as a subgroup of $G$ embedded diagonally in it. Let $\mathfrak{K}_{0}, \mathfrak{K}_{1}\subseteq G$ be the subgroups stabilizing respectively the standard vertex and the standard edge of the Bruhat-Tits tree of $G$. Note that $\mathfrak{K}_{0}=K\varpi_{D}^{\mathbb{Z}}$, and $\mathfrak{K}_{1}$ is generated by $I$ and the matrix $t=\left(\begin{smallmatrix} 0 & 1 \\ \varpi_{D} & 0\end{smallmatrix}\right)$. The groups $K$ and $I$ admit filtrations by pro-$p$-subgroups $K(n)$ and $I(n)$ respectively for $n\geq 1$ where $K(n)=\left(\begin{smallmatrix} 1+\varpi_{D}^{n}\mathcal{O}_{D} & \varpi_{D}^{n}\mathcal{O}_{D} \\ \varpi_{D}^{n}\mathcal{O}_{D} & 1+\varpi_{D}^{n}\mathcal{O}_{D}\end{smallmatrix}\right)$ and $I(n)=\left(\begin{smallmatrix} 1+\varpi_{D}^{n}\mathcal{O}_{D} & \varpi_{D}^{n-1}\mathcal{O}_{D} \\ \varpi_{D}^{n}\mathcal{O}_{D} & 1+\varpi_{D}^{n}\mathcal{O}_{D}\end{smallmatrix}\right)$. The subgroup $I(1)\subseteq I$ is the standard pro-$p$-Iwahori subgroup. The groups $\mathfrak{K}_{0}$ and $\mathfrak{K}_{1}$ are the normalizers of $K(n)$ and $I(n)$ in $G$ respectively for all $n\geq 1$. Let $B\subseteq G$ be the subgroup of upper triangular matrices (the standard minimal parabolic subgroup), $N\subseteq B$ be the subgroup of upper triangular unipotent matrices (the unipotent radical of $B$), and $T\subseteq B$ be the subgroup of diagonal matrices (the Levi quotient of $B$). The modulus character $\delta_{B}$ of $T$ is $|\cdot|\otimes|\cdot|^{-1}$. We let $Z$ denote the center of $G$ which is isomorphic to $F^{\times}$.
	
	We fix a large enough finite extension $E$ of $F$. The field $E$ depends on the representation at hand and we will explain how large it should be at various places in the article when required. Its valuation ring is denoted by $\mathcal{O}$ and the residue field is denoted by $k=\mathcal{O}/\varpi\mathcal{O}$ where $\varpi\in\mathcal{O}$ is a uniformizer. The rings $R= E, \mathcal{O}, k$ will serve as the coefficient rings for representations of $G$. The representations will be either denoted by $(\pi, V)$ or just by $\pi$ or $V$ depending on the situation. Let $H\subseteq G$ be a subgroup. We write $RH$ for the group algebra of $H$ over $R$ and use the phrases ``$RH$-modules" and ``representations of $H$ over $R$" interchangeably. If $V$ is an $RH$-module, then, for a subset $S\subseteq V$, we denote by $H\cdot S$ the $RH$-submodule of $V$ generated by $S$. 
	
	We fix an isomorphism between $\mathbb{C}$ and the algebraic closure $\overline{E}$ of $E$. For a smooth representation $\pi$ over $E$, we write $\pi_{\mathbb{C}}=\pi\otimes_{E}\mathbb{C}$ for its scalar extension via the embedding $E\hookrightarrow\mathbb{C}$ induced by the fixed isomorphism. We also call $\pi$ an $E$-model of $\pi_{\mathbb{C}}$. By \cite[Section 3.13]{ceg+}, all the results about $\pi_{\mathbb{C}}$ are valid for $\pi$ (over a large enough $E$).
	If a representation $\pi$ admits a central character, it is denoted by $\omega_{\pi}$.
	
	A smooth representation of $G$ (resp. of $D^{\times}$) will be called tamely ramified if it has a non-zero vector fixed by the subgroup $K(1)$ (resp. by $D(1)=1+\varpi_{D}\mathcal{O}_{D}$). For a divisor $d'$ of $d$, let $D_{d'}$ be the centralizer of $F_{d'}$ in $D$ which is a central $F_{d'}$-division algebra of index $d/d'$. Let $\theta:F_{d'}^{\times}\rightarrow E^{\times}$ be a character which is trivial on the subgroup of integral units congruent to $1$ modulo the maximal ideal and all whose Galois conjugates are distinct. Here one requires $E$ to contain all $d$ roots of unity. Composing it with the reduced norm $\mathrm{Nrd}:D_{d'}^{\times}\rightarrow F_{d'}^{\times}$ and extending it to $D(1)D^{\times}_{d'}$ by declaring it to be trivial on $D(1)$, we get a character, denoted by the same letter, $\theta:D(1)D^{\times}_{d'}\rightarrow E^{\times}$. The representation $\mathrm{Ind}_{D(1)D^{\times}_{d'}}^{D^{\times}}\theta$ is absolutely irreducible and tamely ramified. In fact, all smooth tamely ramified absolutely irreducible representations of $D^{\times}$ over $E$ are obtained in this fashion \cite{sz05}. 
	
	\section{Coefficient systems and Vign\'{e}ras' integrality criterion}\label{section2}
	
	We begin by recalling some definitions. Let $H\subseteq G$ be an open subgroup. A locally algebraic representation of $H$ over $E$ is a representation of the form $\pi=\pi_{sm}\otimes\pi_{alg}$, where $\pi_{sm}$ is a smooth representation of $H$ over $E$ and $\pi_{alg}$ is the restriction to $H$ of a finite-dimensional rational representation of $G$ over $E$. If $\pi_{sm}$ has a name ``X", then $\pi$ will be called by the name ``locally algebraic X". The representation $\pi$ is irreducible if and only if $\pi_{sm}$ and $\pi_{alg}$ are irreducible \cite[Appendix, Theorem 1]{st01}. An $H$-integral structure is an $H$-stable free $\mathcal{O}$-submodule $\pi^{\circ}\subseteq\pi$ which spans $\pi$ over $E$. The integral structure is also called an $H$-lattice since it is a lattice stable under the action of $H$. If an $H$-integral structure exists, we say that $\pi$ is $H$-integral, or just integral if the group is clear from the context. We are interested in the integrality of irreducible locally algebraic representations of $G$. 
	
	A diagram \[\mathcal{D}_{1}\xrightarrow{r}\mathcal{D}_{0}\] is a datum consisting of continuous (smooth when $R=k$) $R\mathfrak{K}_{i}$-modules $\mathcal{D}_{i}$ and a map $r$ equivariant for the action of $\mathfrak{K}_{0}\cap\mathfrak{K}_{1}=I\varpi_{D}^{\mathbb{Z}}$. Such a diagram gives rise to a $G$-equivariant coefficient system on the Bruhat-Tits tree of $G$. Conversely, the restriction of a $G$-equivariant coefficient system to the subtree consisting of the standard edge and the standard vertex is a diagram. Associated to a diagram $\mathcal{D}$ (or to a coefficient system), one has oriented chain homology groups $H_{i}(\mathcal{D})$, $i=0,1$, which are continuous $RG$-modules defined by the following exact sequence \[0\xrightarrow{\hspace{3mm}} H_{1}(\mathcal{D})\xrightarrow{\hspace{3mm}}\text{c-Ind}_{\mathfrak{K}_{1}}^{G}\mathcal{D}_{1}^{\varepsilon}\xrightarrow{\hspace{1mm}\partial\hspace{1mm}}\text{c-Ind}_{\mathfrak{K}_{0}}^{G}\mathcal{D}_{0}\xrightarrow{\hspace{3mm}} H_{0}(\mathcal{D})\xrightarrow{\hspace{3mm}} 0.\] Here, the $\mathfrak{K}_{1}$-action on $\mathcal{D}_{1}^{\varepsilon}$ is twisted by the non-trivial character $\varepsilon$ on $\mathfrak{K}_{1}/(\mathfrak{K}_{0}\cap\mathfrak{K}_{1})\cong\mathbb{Z}/2\mathbb{Z}$, and the $G$-equivariant boundary map $\partial$ is given by $\partial([1,v])=[1,r(v)]-[t,r(t^{-1}v)]$ for $v\in\mathcal{D}_{1}^{\varepsilon}$, where $[g,v]\in\text{c-Ind}_{\mathfrak{K}_{1}}^{G}\mathcal{D}_{1}^{\varepsilon}$ (resp. $[g,v]\in\text{c-Ind}_{\mathfrak{K}_{0}}^{G}\mathcal{D}_{0}$) denotes the unique function supported on $\mathfrak{K}_{1}g^{-1}$ (resp. $\mathfrak{K}_{0}g^{-1}$) taking value $v\in\mathcal{D}_{1}^{\varepsilon}$ (resp. $v\in\mathcal{D}_{0}$) on $g^{-1}$.
	
	Let $\pi=\pi_{sm}\otimes\pi_{alg}$ be a locally algebraic representation of $G$ over $E$. Assume that $\pi_{sm}$ is generated by its subspace $\pi_{sm}^{I(n)}$ of $I(n)$-invariants for some positive integer $n$ and $\mathrm{dim}_{E}(\pi_{sm}^{I(n)})<\infty$. Let $V_{1}=\pi_{sm}^{I(n)}\otimes\pi_{alg}$ and $V_{0}=\pi_{sm}^{K(n)}\otimes\pi_{alg}$ and consider the diagram 
	\[\mathcal{D}(\pi)=V_{1}\hookrightarrow V_{0}\] of $E\mathfrak{K}_{i}$-modules $V_{i}$. It follows from \cite[Theorem II.3.1]{ss97} and \cite[Proposition 0.4]{vig08} (with exactly the same proof for $\mathrm{GL}_{2}(D)$) that the representation $H_{0}(\mathcal{D}(\pi))$ of $G$ is isomorphic to $\pi$. 
	
	\begin{theorem}[Vign\'{e}ras]\label{vigneras_criterion}
	$\pi$ is integral if and only if $V_{0}$ contains a $\mathfrak{K}_{0}$-lattice $M_{0}$ such that $M_{1}=M_{0}\cap V_{1}$ is a $\mathfrak{K}_{1}$-lattice of $V_{1}$, i.e. $M_{1}$ is stable under the action of $t$. In this situation, $H_{0}({M_{1}\hookrightarrow M_{0}})$ is a $G$-integral structure of $\pi$.
	\end{theorem}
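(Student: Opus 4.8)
The plan is to derive both implications from the defining four-term exact sequence of a diagram, using flat base change along $\mathcal{O}\to E$ and $\mathcal{O}\to k$ together with two elementary facts about coefficient systems on the tree that I would record first. \textbf{Fact 1:} $\text{c-Ind}_{\mathfrak{K}_i}^{G}(-)$ is exact and commutes with $-\otimes_{\mathcal{O}}E$ and with $-\otimes_{\mathcal{O}}k$, so for an $\mathcal{O}$-diagram $\mathcal{D}$ one has $H_0(\mathcal{D})\otimes_{\mathcal{O}}E\cong H_0(\mathcal{D}\otimes_{\mathcal{O}}E)$ by right exactness, and, tensoring the four-term sequence of $\mathcal{D}$ with $k$, one identifies $\mathrm{Tor}_1^{\mathcal{O}}(H_0(\mathcal{D}),k)$ with $H_1(\mathcal{D}\otimes_{\mathcal{O}}k)$ whenever $H_1(\mathcal{D})=0$. \textbf{Fact 2:} $H_1(\mathcal{D})=0$ for any diagram $\mathcal{D}=(\mathcal{D}_1\xrightarrow{r}\mathcal{D}_0)$ for which $r$, and hence every transition map of the associated coefficient system, is injective; this I would prove by reindexing, since each vertex $v\neq\sigma_0$ of the tree has a unique edge $e_v$ pointing towards the standard vertex $\sigma_0$, which rewrites $\text{c-Ind}_{\mathfrak{K}_1}^{G}\mathcal{D}_1^{\varepsilon}$ as a sum of stalks indexed by the $v\neq\sigma_0$ and makes the boundary map lower triangular for the distance-to-$\sigma_0$ filtration, with the injective transition maps as diagonal blocks. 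The same triangular picture shows, for an $\mathcal{O}$-diagram whose injective transition maps have saturated image, that $H_0$ is the $\mathcal{O}$-module direct sum of $\mathcal{D}_0$ (at $\sigma_0$) and the finite free cokernels of the transition maps (at the other vertices), hence a free $\mathcal{O}$-module.

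For ($\Leftarrow$), suppose $M_0\subseteq V_0$ is as in the statement. First I would note that $M_1:=M_0\cap V_1$ is automatically a $\mathfrak{K}_1$-lattice: it is $I$-stable, because $M_0$ is ($I\subseteq\mathfrak{K}_0$) and $V_1$ is ($I$ normalizes $I(n)$), it is $t$-stable by hypothesis, and $I$ and $t$ generate $\mathfrak{K}_1$. Hence $\mathcal{D}^{\circ}:=(M_1\hookrightarrow M_0)$ is a diagram of $\mathcal{O}\mathfrak{K}_i$-lattices with $\mathcal{D}^{\circ}\otimes_{\mathcal{O}}E\cong\mathcal{D}(\pi)$, so Fact 1 gives $H_0(\mathcal{D}^{\circ})\otimes_{\mathcal{O}}E\cong H_0(\mathcal{D}(\pi))\cong\pi$. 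The hypothesis $M_1=M_0\cap V_1$ says $M_1$ is saturated in $M_0$, whence $M_1/\varpi\hookrightarrow M_0/\varpi$ is injective; together with the analogous saturation of the $t$-twisted transition map (a consequence of $t$-stability), every transition map of $\mathcal{D}^{\circ}\otimes_{\mathcal{O}}k$ is injective, so $H_1(\mathcal{D}^{\circ}\otimes_{\mathcal{O}}k)=0$ by Fact 2. Since also $H_1(\mathcal{D}^{\circ})=0$, the four-term sequence of $\mathcal{D}^{\circ}$ becomes $0\to\text{c-Ind}_{\mathfrak{K}_1}^{G}M_1^{\varepsilon}\to\text{c-Ind}_{\mathfrak{K}_0}^{G}M_0\to H_0(\mathcal{D}^{\circ})\to 0$ with $\mathcal{O}$-flat first two terms, and tensoring with $k$ and invoking Fact 1 yields $\mathrm{Tor}_1^{\mathcal{O}}(H_0(\mathcal{D}^{\circ}),k)\cong H_1(\mathcal{D}^{\circ}\otimes_{\mathcal{O}}k)=0$; thus $H_0(\mathcal{D}^{\circ})$ is $\mathcal{O}$-torsion-free, and in fact $\mathcal{O}$-free by the triangular description of Fact 2 (each local cokernel is isomorphic to the finite free module $M_0/M_1$, since $M_1$ is saturated in $M_0$). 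Consequently the canonical map $H_0(\mathcal{D}^{\circ})\to H_0(\mathcal{D}^{\circ})\otimes_{\mathcal{O}}E\cong\pi$ — which is $G$-equivariant, being induced by the inclusion of diagrams $\mathcal{D}^{\circ}\hookrightarrow\mathcal{D}(\pi)$ — is injective with $E$-span all of $\pi$, so it realizes $H_0(M_1\hookrightarrow M_0)$ as a $G$-integral structure of $\pi$.

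For ($\Rightarrow$), I would take a $G$-integral structure $\Pi^{\circ}\subseteq\pi$ and set $M_i:=\Pi^{\circ}\cap V_i$. As a submodule of the free $\mathcal{O}$-module $\Pi^{\circ}$, each $M_i$ is $\mathcal{O}$-free, and it spans $V_i$ over $E$: for $v\in V_i$ some $\varpi^m v$ lies in $\Pi^{\circ}$ (as $\Pi^{\circ}[1/\varpi]=\pi$) and still lies in $V_i$, so $\varpi^m v\in M_i$. Being free and $E$-spanning the finite-dimensional $V_i$, $M_i$ is a lattice in $V_i$. It is $\mathfrak{K}_i$-stable because $\Pi^{\circ}$ is $G$-stable and $V_i$ is $\mathfrak{K}_i$-stable ($\mathfrak{K}_0$ normalizes $K(n)$, $\mathfrak{K}_1$ normalizes $I(n)$); in particular $M_1$ is $t$-stable. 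Finally $M_0\cap V_1=\Pi^{\circ}\cap V_0\cap V_1=\Pi^{\circ}\cap V_1=M_1$, using $V_1\subseteq V_0$. This gives precisely the data asked for in the statement.

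I expect the main obstacle to be the $\mathcal{O}$-freeness (equivalently, torsion-freeness) of $H_0(\mathcal{D}^{\circ})$ in direction ($\Leftarrow$): this is exactly where the precise saturation requirement $M_1=M_0\cap V_1$ enters, and where one must exploit the triangular structure of the chain complex on the tree. For a $t$-stable $\mathfrak{K}_1$-lattice with $M_1\subsetneq M_0\cap V_1$, the reduced transition maps may fail to be injective, $H_1(\mathcal{D}^{\circ}\otimes_{\mathcal{O}}k)$ may be nonzero, and then $H_0(\mathcal{D}^{\circ})$ acquires $\mathcal{O}$-torsion and fails to embed in $\pi$; so the condition in the criterion is sharp. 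The remaining steps are routine bookkeeping with the four-term sequence, flat base change, and the tree combinatorics.
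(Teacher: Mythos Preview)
Your proposal is correct. The paper does not give its own proof of this theorem; it simply cites Vign\'eras' original paper \cite{vig08} (Corollary 0.2 and Proposition 0.4). Your argument is a faithful reconstruction of what lies behind that citation: the triangular filtration of the chain complex by distance to the standard vertex is exactly the mechanism used in Vign\'eras and in Schneider--Stuhler to show $H_1$ vanishes when the transition maps are injective, and your Tor-computation reducing freeness of $H_0(\mathcal{D}^\circ)$ to the vanishing of $H_1(\mathcal{D}^\circ\otimes_{\mathcal{O}}k)$ is the standard way to package it. Two minor remarks: first, your deduction that $H_0(\mathcal{D}^\circ)$ is \emph{free} (not just torsion-free) requires the observation that the filtration by $F_n(H_0)$ splits at each stage because the graded pieces $M_0/M_1$ are finite free and hence projective, so $H_0\cong\bigoplus_n\mathrm{gr}_n(H_0)$ as $\mathcal{O}$-modules---you gesture at this with the ``triangular description'' but it is worth making explicit, since torsion-freeness alone does not give freeness for non-finitely-generated modules over a DVR. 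Second, your check that the $t$-twisted transition map is also saturated (because $t^{-1}M_1=M_1$ and $t^{-1}V_1=V_1$) is exactly the point where $t$-stability of $M_1$ enters, and you handle it correctly.
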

	
	\begin{proof}
	See \cite[Corollary 0.2 and Proposition 0.4]{vig08}.
	\end{proof}
	Suppose $V_{1}$ contains a $\mathfrak{K}_{1}$-lattice $L_{1}$. Starting from $L_{1}$, define inductively an increasing ``zig-zag" sequence of $\mathfrak{K}_{1}$-lattices of $V_{1}$ as follows: 
	\begin{align*}
	    & L_{1}^{(0)}:=L_{1},\\ & L_{1}^{(i+1)}:=\left(\left(\mathfrak{K}_{0}\cdot L_{1}^{(i)}\right)\cap V_{1}\right)+t\left(\left(\mathfrak{K}_{0}\cdot L_{1}^{(i)}\right)\cap V_{1}\right) \hspace{2mm} \text{for $i\geq 0$.}
	\end{align*}
	
	\begin{corollary}\label{zigzag_criterion}
	$\pi$ is integral if and only if $V_{1}$ contains a $\mathfrak{K}_{1}$-lattice $L_{1}$ such that the increasing sequence $(L_{1}^{(i)})_{i}$ of $\mathfrak{K}_{1}$-lattices of $V_{1}$ becomes stationary.
	\end{corollary}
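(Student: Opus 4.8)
The plan is to obtain Corollary \ref{zigzag_criterion} as a purely formal consequence of Vign\'eras' criterion (Theorem \ref{vigneras_criterion}), using only the recursive shape of the zig-zag sequence to pass back and forth between $\mathfrak{K}_0$-lattices of $V_0$ and $\mathfrak{K}_1$-lattices of $V_1$. Throughout I take for granted, as recorded when the sequence was introduced, that every $L_1^{(i)}$ is a $\mathfrak{K}_1$-lattice of $V_1$ and that $(L_1^{(i)})_i$ is increasing.

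For the direction ``$\pi$ integral $\Rightarrow$ stationary zig-zag'', I would start from a $\mathfrak{K}_0$-lattice $M_0\subseteq V_0$ such that $M_0\cap V_1$ is a $\mathfrak{K}_1$-lattice, provided by Theorem \ref{vigneras_criterion}, and run the zig-zag from $L_1:=M_0\cap V_1$. The key claim is $L_1^{(i)}\subseteq L_1$ for all $i$, proved by induction: if $L_1^{(i)}\subseteq L_1\subseteq M_0$, then $\mathfrak{K}_0\cdot L_1^{(i)}\subseteq M_0$ since $M_0$ is $\mathfrak{K}_0$-stable, so $(\mathfrak{K}_0\cdot L_1^{(i)})\cap V_1\subseteq M_0\cap V_1=L_1$; applying $t\in\mathfrak{K}_1$ and using that $L_1$ is $\mathfrak{K}_1$-stable gives $t\big((\mathfrak{K}_0\cdot L_1^{(i)})\cap V_1\big)\subseteq L_1$ too, whence $L_1^{(i+1)}\subseteq L_1$. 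Since the sequence is increasing and bounded above by $L_1=L_1^{(0)}$, it is constant, hence stationary.

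For the converse, suppose the zig-zag attached to some $\mathfrak{K}_1$-lattice $L_1$ becomes stationary; as each term of the recursion is determined by its predecessor, there is an index $i_0$ with $L_1^{(i)}=L:=L_1^{(i_0)}$ for all $i\geq i_0$. I would then set $M_0:=\mathfrak{K}_0\cdot L\subseteq V_0$. First one checks that $M_0$ is a $\mathfrak{K}_0$-lattice of $V_0$: it is finitely generated over $\mathcal{O}$ because $L$ is stable under $\varpi_D=t^2\in\mathfrak{K}_1$, so that $\mathfrak{K}_0\cdot L=K\cdot L$ with $K$ compact acting on the finite-dimensional space $V_0$; and it spans $V_0$ over $E$ because $V_0$ is generated over $\mathfrak{K}_0$ by $V_1$, equivalently $\pi_{sm}^{K(n)}$ is generated over $K$ by $\pi_{sm}^{I(n)}$ for the representations at hand. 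Next, $M_0\cap V_1=L$: the inclusion ``$\supseteq$'' is immediate, while ``$\subseteq$'' reads $M_0\cap V_1=(\mathfrak{K}_0\cdot L_1^{(i_0)})\cap V_1\subseteq L_1^{(i_0+1)}=L$, straight from the definition of the zig-zag. Hence $M_1:=M_0\cap V_1=L$ is a $\mathfrak{K}_1$-lattice of $V_1$, and Theorem \ref{vigneras_criterion} produces the integral structure $H_0(M_1\hookrightarrow M_0)$ of $\pi$.

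Both directions amount to little more than bookkeeping once Theorem \ref{vigneras_criterion} is available, so I do not anticipate a genuine obstacle. The single point deserving care is confirming that $M_0=\mathfrak{K}_0\cdot L$ really spans $V_0$, i.e. that $V_0$ is generated as a $\mathfrak{K}_0$-representation by $V_1$; this is where one must use the explicit shape $V_0=\pi_{sm}^{K(n)}\otimes\pi_{alg}$ together with the standing hypothesis that $\pi_{sm}$ is generated by $\pi_{sm}^{I(n)}$. Everything else is a direct manipulation of the recursion defining $L_1^{(i+1)}$.
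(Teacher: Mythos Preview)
Your proof is correct and follows essentially the same route as the paper's: take $L_1=M_1=M_0\cap V_1$ from Theorem \ref{vigneras_criterion} to get a stationary zig-zag in one direction, and set $M_0=\mathfrak{K}_0\cdot L_1^{(i_0)}$ once the sequence stabilizes to recover Vign\'eras' criterion in the other. You spell out more detail than the paper does (the induction bounding $L_1^{(i)}\subseteq L_1$, and the verification that $\mathfrak{K}_0\cdot L$ really spans $V_0$), but the argument is the same.
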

	\begin{proof}
	If $\pi$ is integral, then the $\mathfrak{K}_{1}$-lattice $M_{1}$ as in Theorem \ref{vigneras_criterion} satisfies $M_{1}^{(0)}=M_{1}^{(1)}$. Conversely, if $V_{1}$ contains a $\mathfrak{K}_{1}$-lattice $L_{1}$ such that $L_{1}^{(i_{0})}=L_{1}^{(i_{0}+1)}$ for some positive integer $i_{0}$, then $M_{0}=\mathfrak{K}_{0}\cdot L_{1}^{(i_{0})}$ is a $\mathfrak{K}_{0}$-lattice of $V_{0}$ such that $M_{1}=M_{0}\cap V_{1}=L_{1}^{(i_{0})}$.  
	\end{proof}
	
	\begin{remark}\label{zigzag_rmk}
	If $\pi$ is integral, then for any $\mathfrak{K}_{1}$-lattice $L_{1}$ of $V_{1}$, the sequence $(L_{1}^{(i)})_{i}$ of $\mathfrak{K}_{1}$-lattices becomes stationary. Indeed, we know that there is a $\mathfrak{K}_{1}$-lattice $M_{1}$ of $V_{1}$ such that the the sequence $(M_{1}^{(i)})_{i}$ becomes stationary. We may assume $L_{1}\subseteq M_{1}$. Since $[M_{1}:L_{1}]$ is finite, it is clear that the increasing sequence $(L_{1}^{(i)})_{i}$ also becomes stationary.
	\end{remark}
	
	We record one lemma for later use:
	\begin{lemma}\label{surjectivity_on_h0}
		Let $L_{1}$ be a $\mathfrak{K}_{1}$-lattice in $V_{1}$ and $L_{0}^{(i)}=\mathfrak{K}_{0}\cdot L_{1}^{(i)}$ for all $i\geq 0$. Then the natural map $H_{0}(L_{1}^{(i)}\hookrightarrow L_{0}^{(i)})\rightarrow H_{0}(L_{1}^{(i+1)}\hookrightarrow L_{0}^{(i+1)})$ is surjective for all $i\geq 0$.
	\end{lemma}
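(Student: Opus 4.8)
The plan is to realise each homology group $H_0(L_1^{(i)}\hookrightarrow L_0^{(i)})$ as the cokernel of the boundary map $\partial^{(i)}\colon\text{c-Ind}_{\mathfrak{K}_1}^{G}(L_1^{(i)})^{\varepsilon}\to\text{c-Ind}_{\mathfrak{K}_0}^{G}L_0^{(i)}$ of the diagram $L_1^{(i)}\hookrightarrow L_0^{(i)}$, and then to prove the sharp statement that $\text{c-Ind}_{\mathfrak{K}_0}^{G}L_0^{(i+1)}=\text{c-Ind}_{\mathfrak{K}_0}^{G}L_0^{(i)}+\mathrm{im}\,\partial^{(i+1)}$. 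Granting this, the natural map on $H_0$ -- which is the one induced by the inclusion $\text{c-Ind}_{\mathfrak{K}_0}^{G}L_0^{(i)}\hookrightarrow\text{c-Ind}_{\mathfrak{K}_0}^{G}L_0^{(i+1)}$, available because $L_1^{(i)}\subseteq L_1^{(i+1)}$ forces $L_0^{(i)}=\mathfrak{K}_0\cdot L_1^{(i)}\subseteq\mathfrak{K}_0\cdot L_1^{(i+1)}=L_0^{(i+1)}$ -- is visibly surjective.

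Set $W^{(i)}:=L_0^{(i)}\cap V_1$. First I would record that $L_1^{(i)}\subseteq W^{(i)}$ (indeed $L_1^{(i)}\subseteq V_1$ and $L_1^{(i)}\subseteq\mathfrak{K}_0\cdot L_1^{(i)}=L_0^{(i)}$), so that $\mathfrak{K}_0\cdot W^{(i)}=L_0^{(i)}$, while the defining recursion of the zig-zag reads $L_1^{(i+1)}=W^{(i)}+tW^{(i)}$. Combining these gives the key identity
\[L_0^{(i+1)}=\mathfrak{K}_0\cdot L_1^{(i+1)}=\mathfrak{K}_0\cdot W^{(i)}+\mathfrak{K}_0\cdot tW^{(i)}=L_0^{(i)}+\mathfrak{K}_0\cdot tW^{(i)}.\]
Since compact induction is exact, $\text{c-Ind}_{\mathfrak{K}_0}^{G}L_0^{(i+1)}/\text{c-Ind}_{\mathfrak{K}_0}^{G}L_0^{(i)}\cong\text{c-Ind}_{\mathfrak{K}_0}^{G}\!\big(L_0^{(i+1)}/L_0^{(i)}\big)$, and by the key identity the latter is generated as an $\mathcal{O}G$-module by the classes of the elements $[1,tw]$ with $w\in W^{(i)}$ (using $[1,k\cdot v]=k\cdot[1,v]$ for $k\in\mathfrak{K}_0$).

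The heart of the matter is then the identity, valid for all $w\in W^{(i)}$,
\[[1,tw]=[t,w]+\partial^{(i+1)}\big([1,tw]\big)\qquad\text{in }\ \text{c-Ind}_{\mathfrak{K}_0}^{G}L_0^{(i+1)},\]
which is just the formula $\partial([1,v])=[1,r(v)]-[t,r(t^{-1}v)]$ evaluated at $v=tw$, with $r$ the inclusion $L_1^{(i+1)}\hookrightarrow L_0^{(i+1)}$; this is legitimate because $tw\in tW^{(i)}\subseteq L_1^{(i+1)}$ and $L_1^{(i+1)}=W^{(i)}+tW^{(i)}$ is a $\mathfrak{K}_1$-lattice, whence $t^{-1}(tw)=w$ also lies in it (any $\varepsilon$-sign attached to $[t,w]$ is harmless below). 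As $w\in W^{(i)}\subseteq L_0^{(i)}$, the term $[t,w]=t\cdot[1,w]$ lies in the $G$-subrepresentation $\text{c-Ind}_{\mathfrak{K}_0}^{G}L_0^{(i)}$. Now, given an arbitrary $x\in\text{c-Ind}_{\mathfrak{K}_0}^{G}L_0^{(i+1)}$, I would lift its image in $\text{c-Ind}_{\mathfrak{K}_0}^{G}(L_0^{(i+1)}/L_0^{(i)})$ to some $y=\sum_l g_l\cdot[1,tw_l]$ by the generation statement; then $x-y\in\text{c-Ind}_{\mathfrak{K}_0}^{G}L_0^{(i)}$, and the displayed identity together with the $G$-stability of $\text{c-Ind}_{\mathfrak{K}_0}^{G}L_0^{(i)}$ and of $\mathrm{im}\,\partial^{(i+1)}$ shows $y\in\text{c-Ind}_{\mathfrak{K}_0}^{G}L_0^{(i)}+\mathrm{im}\,\partial^{(i+1)}$, hence so does $x$. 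This yields the sharp statement, and the surjectivity of $H_0(L_1^{(i)}\hookrightarrow L_0^{(i)})\to H_0(L_1^{(i+1)}\hookrightarrow L_0^{(i+1)})$ follows at once.

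I do not anticipate a genuine obstacle here: the argument is essentially formal once the identity $L_0^{(i+1)}=L_0^{(i)}+\mathfrak{K}_0\cdot tW^{(i)}$ is in hand. The only points requiring real care are bookkeeping -- checking that both $tw$ and $w$ lie inside the $t$-stable lattice $L_1^{(i+1)}$ so that $\partial^{(i+1)}([1,tw])$ is a meaningful expression, and verifying that $\partial^{(i)}$ and $\partial^{(i+1)}$ are compatible under the inclusion of diagrams, so that the ``natural map'' on $H_0$ is indeed the one induced by $\text{c-Ind}_{\mathfrak{K}_0}^{G}L_0^{(i)}\hookrightarrow\text{c-Ind}_{\mathfrak{K}_0}^{G}L_0^{(i+1)}$.
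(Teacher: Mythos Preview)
Your proof is correct and follows essentially the same approach as the paper's: both arguments reduce to the generators $[1,tw]$ with $w\in W^{(i)}=(\mathfrak{K}_0\cdot L_1^{(i)})\cap V_1$ and then use the boundary formula $\partial([1,tw])=[1,tw]-[t,w]$ (the paper writes this as $\partial([\varpi_D t^{-1},v''])=[\varpi_D t^{-1},v'']-[1,tv'']$, noting $\varpi_D t^{-1}=t$) together with $w\in L_0^{(i)}$ to exhibit the required preimage. The only cosmetic difference is that you package the computation as the identity $\text{c-Ind}_{\mathfrak{K}_0}^{G}L_0^{(i+1)}=\text{c-Ind}_{\mathfrak{K}_0}^{G}L_0^{(i)}+\mathrm{im}\,\partial^{(i+1)}$, whereas the paper argues directly on homology classes.
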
	
	\begin{proof}
		By definition, $H_{0}(L_{1}^{(i+1)}\hookrightarrow L_{0}^{(i+1)})=\left(\text{c-Ind}_{\mathfrak{K}_{0}}^{G}(\mathfrak{K}_{0}\cdot L_{1}^{(i+1)})\right)/\text{Im}(\partial)$. We need to show that a homology class $[g,v]+\text{Im}(\partial)$ in $H_{0}(L_{1}^{(i+1)}\hookrightarrow L_{0}^{(i+1)})$ has a preimage in $H_{0}(L_{1}^{(i)}\hookrightarrow L_{0}^{(i)})$. As the natural map between homologies is $G$-equivariant, it is enough to consider a class $[1,v]+\text{Im}(\partial)$ in $H_{0}(L_{1}^{(i+1)}\hookrightarrow L_{0}^{(i+1)})$ with $v\in L_{1}^{(i+1)}$. An element $v\in L_{1}^{(i+1)}$ is of the form $v'+tv''$ with $v',v''\in(\mathfrak{K}_{0}\cdot L_{1}^{(i)})\cap V_{1}$. The homology class $[1,v']+\text{Im}(\partial)\in H_{0}(L_{1}^{(i+1)}\hookrightarrow L_{0}^{(i+1)})$ has a preimage $[1,v']+\text{Im}(\partial)\in H_{0}(L_{1}^{(i)}\hookrightarrow L_{0}^{(i)})$. Consider the homology class $[1,tv'']+\text{Im}(\partial)\in H_{0}(L_{1}^{(i+1)}\hookrightarrow L_{0}^{(i+1)})$. Note that $\partial([\varpi_{D}t^{-1},v''])=[\varpi_{D}t^{-1},v'']-[1,tv'']$. Thus $[1,tv'']+\text{Im}(\partial)=[\varpi_{D}t^{-1},v'']+\text{Im}(\partial)$ in $H_{0}(L_{1}^{(i+1)}\hookrightarrow L_{0}^{(i+1)})$ and the class $[\varpi_{D}t^{-1},v'']+\text{Im}(\partial)$ has a preimage $[\varpi_{D}t^{-1},v'']+\text{Im}(\partial)$ in $H_{0}(L_{1}^{(i)}\hookrightarrow L_{0}^{(i)})$ because $v''\in L_{0}^{(i)}$.
	\end{proof}
	
	The integrality criterion in Corollary \ref{zigzag_criterion} will be used in the following sections to show that Emerton's conditions are sufficient for the existence of integral structures in $\pi$ for which $\pi_{sm}$ is tamely ramified. 
	
	\section{Integrality of smooth principal series}\label{section3}
	
	Let $(\tau_1, W_1)$ and $(\tau_2, W_2)$ be two smooth absolutely irreducible tamely ramified representations of $D^\times$ over $E$. In this section, $\pi=\pi_{sm}={\rm Ind}_B^G(\tau_1\otimes \tau_2)$, a smooth principal series representation. Note that $\pi^{I(1)}\neq 0$. In fact, $\pi$ is generated by $\pi^{I(1)}$ as a $G$-representation. In order to describe the spaces $V_{0}=\pi^{K(1)}$ and $V_{1}=\pi^{I(1)}$, we define some explicit elements of the principal series $\pi$. 
	
	Let $s=\left(\begin{smallmatrix}0 & 1\\1 & 0\end{smallmatrix}\right)$ and $u_\lambda=\left(\begin{smallmatrix}1&[\lambda]\\0&1\end{smallmatrix}\right)$ where $[\lambda]\in\mathcal{O}_{D}$ is the Teichm\"{u}ller lift of $\lambda\in \mathbb{F}_{q^{d}}$. For $h\in G$ and $v\in W_1\otimes W_2$, we denote by $f^{h}_{v}$ the unique function in $\pi^{I(1)}$ supported on $BhI(1)$ such that $f^{h}_{v}(h)=v$. Similarly, we denote by $g^{h}_{v}$ the unique function in $\pi^{K(1)}$ supported on $BhK(1)$ such that $g^{h}_{v}(h)=v$. Note that $f^{1}_{v}=g^{1}_{v}$ because $BI(1)=BK(1)$, and $f^{s}_{v}=\sum_{\lambda\in\mathbb{F}_{q^{d}}}g^{su_{\lambda}}_{v}$ because $BsI(1)=\bigsqcup_{\lambda\in\mathbb{F}_{q^{d}}} Bsu_{\lambda}K(1)$. If $M\subseteq W_{1}\otimes W_{2}$ is an $\mathcal{O}$-submodule, it is convenient to write $f^{h}_{M}$ for the set $\lbrace f^{h}_{v}:v\in M\rbrace$ and similarly $g^{h}_{M}$ for the set $\lbrace g^{h}_{v}:v\in M\rbrace$. These sets are $\mathcal{O}$-modules isomorphic to $M$ for point-wise addition and scalar multiplication. As \[G=BI(1)\sqcup BsI(1)=BK(1)\sqcup\bigsqcup_{\lambda\in\mathbb{F}_{q^{d}}}Bsu_{\lambda}K(1),\] we have \begin{equation*}
	    V_{1}=f^{1}_{W_{1}\otimes W_{2}}\oplus f^{s}_{W_{1}\otimes W_{2}}\hspace{2mm}\text{and}\hspace{2mm}
	    V_{0}=g^{1}_{W_{1}\otimes W_{2}}\oplus\bigoplus_{\lambda\in\mathbb{F}_{q^{d}}} g^{su_{\lambda}}_{W_{1}\otimes W_{2}}\hspace{2mm}.
	\end{equation*}
	It is easy to check $tf^{1}_{v}=f^{s}_{(\tau_{1}(\varpi_{D})\otimes\mathrm{Id})(v)}$ and $tf^{s}_{v}=f^{1}_{(\mathrm{Id}\otimes\tau_{2}(\varpi_{D}))(v)}$. By letting $K(1)$ act trivially on $W_{1}\otimes W_{2}$, one can extend the action of $B\cap\mathfrak{K}_{0}$ on $W_{1}\otimes W_{2}$ to $I\varpi_{D}^{\mathbb{Z}}=(B\cap\mathfrak{K}_{0})K(1)$. Then, as $E\mathfrak{K}_{0}$-modules, $V_{0}\cong\mathrm{Ind}^{\mathfrak{K}_{0}}_{I\varpi_{D}^{\mathbb{Z}}}(\tau_{1}\otimes\tau_{2})$.
	
	Let $T_{0}=T\cap\mathfrak{K}_{0}$. The central character $\omega_{\pi}$ of $\pi$ equals $\omega_{\tau_{1}}\omega_{\tau_{2}}$. When $\omega_{\pi}$ is integral, there exists a $T_{0}$-lattice $\mathcal{L}\subseteq W_1\otimes W_2$. The main result of this section is the following:
	
	\begin{theorem}\label{main_thm_for_prin_ser}
	The tamely ramified principal series representation $\pi={\rm Ind}_B^G(\tau_1\otimes \tau_2)$ with integral central character is integral if and only if $q^{d^{2}}\omega_{\tau_{1}}(\varpi_{F})\in\mathcal{O}$ and $\omega_{\tau_{2}}(\varpi_{F})\in\mathcal{O}$.
	\end{theorem}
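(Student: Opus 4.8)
We first dispose of the ``only if'' direction: taking $\underline\lambda=\underline 0$ in \eqref{emerton_condition_ps}, the two displayed inequalities are exactly the conditions forced by Emerton's necessary criterion \eqref{emerton_condition} (via \cite[Lemma 4.4.2]{eme06}), so if $\pi$ admits a $G$-integral structure then $q^{d^{2}}\omega_{\tau_{1}}(\varpi_{F}),\omega_{\tau_{2}}(\varpi_{F})\in\mathcal O$. The substance of the theorem is the ``if'' direction, which I would establish via the zig-zag criterion of Corollary \ref{zigzag_criterion} applied to the diagram $\mathcal D(\pi)=(V_{1}\hookrightarrow V_{0})$; the hypotheses of \S\ref{section2} hold with $n=1$ since $\pi$ is generated by $\pi^{I(1)}$ and $V_{1}=f^{1}_{W_{1}\otimes W_{2}}\oplus f^{s}_{W_{1}\otimes W_{2}}$ is finite-dimensional.

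\textbf{The initial lattice.} Since $\omega_{\pi}=\omega_{\tau_{1}}\omega_{\tau_{2}}$ is integral there is a $T_{0}$-lattice $\mathcal L\subseteq W_{1}\otimes W_{2}$; because $\varpi_{D}I_{2}\in T_{0}$ acts by $\tau_{1}(\varpi_{D})\otimes\tau_{2}(\varpi_{D})$, this matrix fixes $\mathcal L$, and $\mathcal L$ is $(\mathcal O_{D}^{\times}\times\mathcal O_{D}^{\times})$-stable. From $tf^{1}_{v}=f^{s}_{(\tau_{1}(\varpi_{D})\otimes\mathrm{Id})v}$, $tf^{s}_{v}=f^{1}_{(\mathrm{Id}\otimes\tau_{2}(\varpi_{D}))v}$ and the fact that $\varpi_{D}$ normalizes $\mathcal O_{D}^{\times}$, one verifies that
\[ L_{1} \;:=\; f^{1}_{\mathcal L}\,\oplus\, f^{s}_{(\tau_{1}(\varpi_{D})\otimes\mathrm{Id})\mathcal L} \]
is a $\mathfrak K_{1}$-lattice of $V_{1}$: $I$ acts through its diagonal torus and preserves each summand, and the $t$-stability comes down precisely to $(\tau_{1}(\varpi_{D})\otimes\tau_{2}(\varpi_{D}))\mathcal L=\mathcal L$.

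\textbf{Running the zig-zag.} The plan is to prove by induction that every iterate is of the form $L_{1}^{(i)}=f^{1}_{\mathcal A_{i}}\oplus f^{s}_{\mathcal B_{i}}$ with $\mathcal A_{i},\mathcal B_{i}$ lattices stable under $\mathcal O_{D}^{\times}\times\mathcal O_{D}^{\times}$, and to make the transition explicit. The ingredients are: (i) under $V_{0}\cong\mathrm{Ind}^{\mathfrak K_{0}}_{I\varpi_{D}^{\mathbb Z}}(\tau_{1}\otimes\tau_{2})$ and $f^{1}_{v}=g^{1}_{v}$ one has $\mathfrak K_{0}\cdot f^{1}_{\mathcal A}=g^{1}_{\mathcal A^{+}}\oplus\bigoplus_{\lambda\in\mathbb F_{q^{d}}}g^{su_{\lambda}}_{\mathcal A^{+}}$, where $\mathcal A^{+}=\sum_{n=0}^{d-1}(\tau_{1}(\varpi_{D})\otimes\tau_{2}(\varpi_{D}))^{n}\mathcal A$ is the $T_{0}$-saturation; (ii) the $\mathfrak K_{0}$-module generated by the edge vector $f^{s}_{\mathcal B}=\sum_{\lambda}g^{su_{\lambda}}_{\mathcal B}$ is computed from the action of $\mathrm{GL}_{2}(\mathbb F_{q^{d}})=K/K(1)$ on the finite principal series $\mathrm{Ind}^{\mathrm{GL}_{2}(\mathbb F_{q^{d}})}_{B(\mathbb F_{q^{d}})}(\bar\tau_{1}\boxtimes\bar\tau_{2})$ (with $\bar\tau_{j}$ the reduction of $\tau_{j}|_{\mathcal O_{D}^{\times}}$ through $\mathcal O_{D}^{\times}/D(1)=\mathbb F_{q^{d}}^{\times}$) together with the rescaling by $\varpi_{D}$; (iii) one then intersects $\mathfrak K_{0}\cdot L_{1}^{(i)}$ with $V_{1}$ (using $f^{1}_{v}=g^{1}_{v}$, $f^{s}_{v}=\sum_{\lambda}g^{su_{\lambda}}_{v}$) and adds the $t$-translate, obtaining $\mathcal A_{i+1}=\mathcal A_{i}'+(\mathrm{Id}\otimes\tau_{2}(\varpi_{D}))\mathcal B_{i}'$ and $\mathcal B_{i+1}=\mathcal B_{i}'+(\tau_{1}(\varpi_{D})\otimes\mathrm{Id})\mathcal A_{i}'$, where $\mathcal A_{i}',\mathcal B_{i}'$ are the ``$V_{1}$-intersection'' modifications of the saturations $\mathcal A_{i}^{+},\mathcal B_{i}^{+}$ and the only denominator appearing is a power of $q$ bounded by $q^{d^{2}}=|\varpi_{F}|^{-1}$. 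Since $\tau_{j}(\varpi_{D})^{d}=\omega_{\tau_{j}}(\varpi_{F})\cdot\mathrm{Id}$, the hypotheses $q^{d^{2}}\omega_{\tau_{1}}(\varpi_{F})\in\mathcal O$ and $\omega_{\tau_{2}}(\varpi_{F})\in\mathcal O$ are exactly what is needed to keep the increasing chain $(\mathcal A_{i})_{i}$ (equivalently $(\mathcal B_{i})_{i}$) bounded above by a fixed lattice; then $(L_{1}^{(i)})_{i}$ stabilizes, and Corollary \ref{zigzag_criterion} yields a $G$-integral structure in $\pi$.

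\textbf{The main obstacle.} The delicate step is (ii): determining precisely the $\mathfrak K_{0}$-submodule of $V_{0}$ generated by the edge vectors $f^{s}_{\mathcal B}$ — equivalently, how the lattice on the standard edge propagates along the Bruhat--Tits tree — and locating exactly where the factor $q^{d^{2}}$ enters. This is a finite computation with the coset representatives $su_{\lambda}$ and the generators $s$, $u_{\mu}$ and $\varpi_{D}$ of $\mathfrak K_{0}$; relative to the $\mathrm{GL}_{2}(F)$ analysis of \cite{vig08} the new difficulties are that $\varpi_{D}$ does not commute with $\mathcal O_{D}^{\times}$ and that $\tau_{1},\tau_{2}$ may be higher-dimensional, but the computation produces only unit scalars and the single power $q^{d^{2}}$, after which termination of the zig-zag is the valuation bookkeeping above, controlled exactly by the two displayed inequalities. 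One may instead try to apply Theorem \ref{vigneras_criterion} directly, building $M_{0}$ from the standard $\mathfrak K_{0}$-lattice $\mathrm{Ind}^{\mathfrak K_{0}}_{I\varpi_{D}^{\mathbb Z}}(\mathcal L)$ by rescaling, on the vertices at odd distance from the standard one, a suitable proper $\mathrm{GL}_{2}(\mathbb F_{q^{d}})$-sublattice of the finite principal series, and checking $M_{0}\cap V_{1}$ is $t$-stable; the zig-zag route, in the spirit of \cite{vig08, hu20}, is the more systematic one.
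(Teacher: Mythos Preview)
Your framework is correct and matches the paper's: the zig-zag criterion of Corollary~\ref{zigzag_criterion} applied to an initial $\mathfrak{K}_{1}$-lattice of the shape $f^{1}_{\mathcal{L}}\oplus f^{s}_{(\tau_{1}(\varpi_{D})\otimes\mathrm{Id})\mathcal{L}}$. (Once one imposes the natural refinement $(\mathrm{Id}\otimes\tau_{2}(\varpi_{D}))\mathcal{L}\subseteq\mathcal{L}$ and $q^{d}(\tau_{1}(\varpi_{D})\otimes\mathrm{Id})\mathcal{L}\subseteq\mathcal{L}$ on the $T_{0}$-lattice --- which you do not state but should --- your $L_{1}$ coincides with the paper's $f^{1}_{\mathcal{L}}\oplus f^{s}_{\mathcal{L}+(\tau_{1}(\varpi_{D})\otimes\mathrm{Id})\mathcal{L}}$.) The substantive divergence, and the gap, is in how you close the argument.

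The paper does not show that the chain $(L_{1}^{(i)})$ is bounded above and hence eventually stationary; it proves the sharp equality $L_{1}^{(1)}=L_{1}^{(0)}$, i.e.\ $(\mathfrak{K}_{0}\cdot L_{1})\cap V_{1}=L_{1}$. You locate the difficulty in step~(ii), computing $\mathfrak{K}_{0}\cdot f^{s}_{\mathcal{B}}$, but that part is in fact a clean coset calculation: with $F^{x}_{v}:=u_{x}s\cdot f^{s}_{v}$ one gets $F^{x}_{v}=g^{1}_{v}+\omega_{\tau_{1}}(-1)\sum_{\lambda\neq 0}g^{su_{(1/\lambda)-x}}_{\xi_{\lambda}(v)}$ and $\sum_{x}F^{x}_{v}=f^{1}_{q^{d}v}+(\text{terms in }f^{s})$, so the scalar that appears is $q^{d}$, not $q^{d^{2}}$. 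The hard step is your~(iii), which you treat as bookkeeping: given an $I(1)$-invariant element $l+\sum'_{x,v}F^{x}_{v}$ with $l\in L_{0}$, show it already lies in $L_{1}$. The paper does this by a Fourier-theoretic argument on $\mathbb{F}_{q^{d}}$ in the spirit of \cite[\S3]{vig08}: picking an eigenbasis $\{v_{i}\}$ for the operators $\xi_{\lambda}$, the $I(1)$-invariance becomes a convolution identity $c_{i}\Delta=\omega_{\tau_{1}}(-1)(a_{i}\ast\psi_{i})+b_{i}$ in functions on $\mathbb{F}_{q^{d}}$, and one extracts $\widehat{a_{i}}(0)v_{i}\in\mathcal{L}$ by manipulating Fourier transforms and the Gauss-sum relation $\widehat{\psi_{i}}\widehat{\psi_{i}^{-1}}=\psi_{i}(-1)q^{d}(\Delta-\delta_{0})$, with separate treatment of trivial and non-trivial $\psi_{i}$. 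This is precisely where the hypothesis $q^{d}(\tau_{1}(\varpi_{D})\otimes\mathrm{Id})\mathcal{L}\subseteq\mathcal{L}$ is \emph{consumed}, to conclude $q^{d}v_{i}\in\mathcal{L}$.

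Your alternative route --- keep the chain bounded above by a fixed lattice and invoke noetherianity --- does not avoid this. Each pass through your steps (ii)--(iii) a priori introduces a factor $q^{-d}$ in the $f^{1}$-component; unless you know that the intersection with $V_{1}$ kills this growth, the denominators compound and no fixed upper bound is visible. But ``the intersection kills the growth'' is exactly the statement $(\mathfrak{K}_{0}\cdot L_{1})\cap V_{1}\subseteq L_{1}$, which is the Fourier computation. So the boundedness claim is not easier than immediate stabilization; it is the same computation, and your outline is missing it.
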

	\begin{proof}

	$(\implies)$ Though the necessity is known due to Emerton, we provide a proof to set up the notation for the next part. Let \[L_{0}:=\mathrm{Ind}^{\mathfrak{K}_{0}}_{I\varpi_{D}^{\mathbb{Z}}}\mathcal{L}=g^{1}_{\mathcal{L}}\oplus\bigoplus_{\lambda\in \mathbb{F}_{q^{d}}}g^{su_{\lambda}}_{\mathcal{L}}\] be a $\mathfrak{K}_{0}$-lattice of $V_{0}$. Then, \[L_{0}\cap V_{1}=L_{0}^{I(1)}=f^{1}_{\mathcal{L}}\oplus f^{s}_{\mathcal{L}}.\] Let \[L_{1}=L_{1}^{(0)}:=L_{0}\cap V_{1}+t(L_{0}\cap V_{1}).\] As $\mathcal{L}$ is stable under the diagonal action of $\varpi_{D}$, $L_{1}$ is stable under the action of $\mathfrak{K}_{1}$ and so it is a $\mathfrak{K}_{1}$-lattice of $V_{1}$. One computes that \[L_{1}=f^{1}_{\mathcal{L}+(\mathrm{Id}\otimes\tau_{2}(\varpi_{D}))\mathcal{L}}\oplus f^{s}_{\mathcal{L}+(\tau_{1}(\varpi_{D})\otimes\mathrm{Id})\mathcal{L}}.\]
	Thus, \begin{equation}\label{eq1}
	    \mathfrak{K}_{0}\cdot L_{1}=\mathfrak{K}_{0}\cdot f^{1}_{\mathcal{L}+(\mathrm{Id}\otimes\tau_{2}(\varpi_{D}))\mathcal{L}}+\mathfrak{K}_{0}\cdot f^{s}_{\mathcal{L}+(\tau_{1}(\varpi_{D})\otimes\mathrm{Id})\mathcal{L}}.
	\end{equation}
	Since $u_{-\lambda}s\cdot f^{1}_{v}=u_{-\lambda}s\cdot g^{1}_{v}=g^{su_{\lambda}}_{v}$, the first summand $\mathfrak{K}_{0}\cdot f^{1}_{\mathcal{L}+(\mathrm{Id}\otimes\tau_{2}(\varpi_{D}))\mathcal{L}}$ in \eqref{eq1} is \[g^{1}_{\mathcal{L}+(\mathrm{Id}\otimes\tau_{2}(\varpi_{D}))\mathcal{L}}\oplus\bigoplus_{\lambda\in \mathbb{F}_{q^{d}}}g^{su_{\lambda}}_{\mathcal{L}+(\mathrm{Id}\otimes\tau_{2}(\varpi_{D}))\mathcal{L}}.\] To describe the second summand $\mathfrak{K}_{0}\cdot f^{s}_{\mathcal{L}+(\tau_{1}(\varpi_{D})\otimes\mathrm{Id})\mathcal{L}}$, let
	\[F^{x}_{v}:=u_{x}s\cdot f^{s}_{v} \hspace{3mm} \text{for $x\in \mathbb{F}_{q^{d}}$ and $v\in\mathcal{L}+(\tau_{1}(\varpi_{D})\otimes\mathrm{Id})\mathcal{L}$}.\]
	The lattice $f^{s}_{\mathcal{L}+(\tau_{1}(\varpi_{D})\otimes\mathrm{Id})\mathcal{L}}$ is stable under the action of $I\varpi_{D}^{\mathbb{Z}}$. The set $\lbrace 1, u_{x}s:x\in\mathbb{F}_{q^{d}}\rbrace$ forms a set of representatives of $\mathfrak{K}_{0}/I\varpi_{D}^{\mathbb{Z}}$. Thus \begin{equation}\label{eq2}
	    \mathfrak{K}_{0}\cdot f^{s}_{\mathcal{L}+(\tau_{1}(\varpi_{D})\otimes\mathrm{Id})\mathcal{L}}=f^{s}_{\mathcal{L}+(\tau_{1}(\varpi_{D})\otimes\mathrm{Id})\mathcal{L}}+\langle\sideset{}{'}\sum_{x,v}F^{x}_{v}\rangle,
	\end{equation} where $\sum'_{x,v}$ denotes a sum over finitely many pairs $(x,v)$ with $x\in \mathbb{F}_{q^{d}}$ and  $v\in\mathcal{L}+(\tau_{1}(\varpi_{D})\otimes\mathrm{Id})\mathcal{L}$ and $\langle\sideset{}{'}\sum_{x,v}F^{x}_{v}\rangle$ is the $\mathcal{O}$-module generated by all such sums.
	Using $su_{c}s=\left(\begin{smallmatrix} -1/[c] & 1 \\ 0 & [c]\end{smallmatrix}\right)su_{1/c}$ for $c\neq 0$, we get \[F^{x}_{v}=u_{x}s\cdot\sum_{\lambda\in \mathbb{F}_{q^{d}}}g^{su_{\lambda}}_{v}=g^{1}_{v}+\omega_{\tau_{1}}(-1)\sum_{\lambda\in \mathbb{F}_{q^{d}}^{\times}}g^{su_{(1/\lambda)-x}}_{\xi_{\lambda}(v)}\] where $\xi_{\lambda}=\tau_{1}(\lambda)\otimes\tau_{2}(1/\lambda)$. For a fixed but arbitrary $v\in\mathcal{L}+(\tau_{1}(\varpi_{D})\otimes\mathrm{Id})\mathcal{L}$, consider \begin{align*}
	    \sum_{x\in \mathbb{F}_{q^{d}}}F^{x}_{v}&=q^{d}g^{1}_{v}+\omega_{\tau_{1}}(-1)\sum_{x\in \mathbb{F}_{q^{d}}}\sum_{\lambda\in \mathbb{F}_{q^{d}}^{\times}}g^{su_{(1/\lambda)-x}}_{\xi_{\lambda}(v)}\\
	    &=f^{1}_{q^{d}v}+\omega_{\tau_{1}}(-1)\sum_{\lambda\in \mathbb{F}_{q^{d}}^{\times}}\sum_{x\in \mathbb{F}_{q^{d}}}g^{su_{(1/\lambda)-x}}_{\xi_{\lambda}(v)}\\
	    &=f^{1}_{q^{d}v}+\omega_{\tau_{1}}(-1)\sum_{\lambda\in \mathbb{F}_{q^{d}}^{\times}}f^{s}_{\xi_{\lambda}(v)}.
	\end{align*}
	Since $\xi_{\lambda}(v)\in\mathcal{L}+(\tau_{1}(\varpi_{D})\otimes\mathrm{Id})\mathcal{L}$, it follows that $f^{1}_{q^{d}v}\in\mathfrak{K}_{0}\cdot f^{s}_{\mathcal{L}+(\tau_{1}(\varpi_{D})\otimes\mathrm{Id})\mathcal{L}}$. Therefore, we have \[f^{1}_{q^{d}(\tau_{1}(\varpi_{D})\otimes\mathrm{Id})\mathcal{L}}\subseteq (\mathfrak{K}_{0}\cdot L_{1})^{I(1)}\subseteq L_{1}^{(1)},\hspace{1mm}\text{and thus}\hspace{2mm}f^{s}_{q^{d}(\tau_{1}(\varpi_{D})\otimes\mathrm{Id})\mathcal{L}}\subseteq (\mathfrak{K}_{0}\cdot L_{1}^{(1)})^{I(1)}.\] Letting $\mathcal{L}^{(i)}_{1}=(\mathrm{Id}\otimes\tau_{2}(\varpi_{D}))^{i}\mathcal{L}$ and $\mathcal{L}^{(i)}_{2}=(q^{d}(\tau_{1}(\varpi_{D})\otimes\mathrm{Id}))^{i}\mathcal{L}$, we have \[f^{1}_{\mathcal{L}^{(1)}_{1}}\oplus f^{s}_{\mathcal{L}^{(1)}_{2}}\subseteq (\mathfrak{K}_{0}\cdot L_{1}^{(1)})^{I(1)}.\] Hence, as above \[f^{1}_{\mathcal{L}^{(1)}_{1}+(\mathrm{Id}\otimes\tau_{2}(\varpi_{D}))\mathcal{L}^{(1)}_{1}}\oplus f^{s}_{\mathcal{L}^{(1)}_{2}+(\tau_{1}(\varpi_{D})\otimes\mathrm{Id})\mathcal{L}^{(1)}_{2}}\subseteq L_{1}^{(2)}.\] Then, by repeating the above argument, we find $f^{1}_{\mathcal{L}^{(2)}_{1}}\subseteq L_{1}^{(3)}$ and $f^{1}_{\mathcal{L}^{(2)}_{2}}\subseteq L_{1}^{(3)}$. Repeatedly using this argument gives, in general,
	\[f^{1}_{\mathcal{L}^{(i)}_{1}}\subseteq L_{1}^{(2i-1)}\hspace{2mm}\text{and}\hspace{2mm}f^{1}_{\mathcal{L}^{(i)}_{2}}\subseteq L_{1}^{(2i-1)}\hspace{2mm}\text{for all $i\geq 0$}.\]
	By Remark \ref{zigzag_rmk}, the existence of an integral structure in $\pi$ implies that the sequence of $\mathfrak{K}_{1}$-lattices $L_{1}^{(i)}$ stabilizes. This implies that the linear maps $\mathrm{Id}\otimes\tau_{2}(\varpi_{D})$ and $q^{d}(\tau_{1}(\varpi_{D})\otimes\mathrm{Id})$ stabilize some lattices in $W_{1}\otimes W_{2}$. Taking the $d$-th power of these maps, we get $\omega_{\tau_{2}}(\varpi_{F})\in\mathcal{O}$ and $q^{d^{2}}\omega_{\tau_{1}}(\varpi_{F})\in\mathcal{O}$.  
	
	$(\impliedby)$
	The assumptions on $\pi$ that its central character is integral and \[ \omega_{\tau_{2}}(\varpi_{F}),q^{d^{2}}\omega_{\tau_{1}}(\varpi_{F})\in\mathcal{O}\] imply that there exists a $T_{0}$-lattice $\mathcal{L}\subseteq W_{1}\otimes W_{2}$ such that \begin{equation}\label{eq3}
          (\mathrm{Id}\otimes\tau_{2}(\varpi_{D}))\mathcal{L}\subseteq\mathcal{L}
          \hspace{2mm}\text{and}
          \hspace{2mm}
          q^{d}(\tau_{1}(\varpi_{D})\otimes\mathrm{Id})\mathcal{L}\subseteq\mathcal{L}.    
	\end{equation}
	Using $\mathcal{L}$, we define the lattices $L_{0}$ and $L_{1}$ as before. Because of \eqref{eq3},
	\[L_{1}=f^{1}_{\mathcal{L}}\oplus f^{s}_{\mathcal{L}+(\tau_{1}(\varpi_{D})\otimes\mathrm{Id})\mathcal{L}}.\] We will prove that $L^{(1)}_{1}=L^{(0)}_{1}=L_{1}$ which implies the integrality of $\pi$ by Corollary \ref{zigzag_criterion}. This is equivalent to proving that  \[(\mathfrak{K}_{0}\cdot L_{1})\cap V_{1}=(\mathfrak{K}_{0}\cdot L_{1})^{I(1)}=L_{1}.\] By \eqref{eq1} and \eqref{eq2}, it is enough to show that if \[\text{$l+\sideset{}{'}\sum_{x,v}F^{x}_{v}\in(\mathfrak{K}_{0}\cdot L_{1})^{I(1)}$ with $l\in L_{0}$,}\] then $l+\sum'_{x,v}F^{x}_{v}\in L_{1}$. Since $q^{d}-1$ is invertible in $\mathcal{O}$, we can choose an $\mathcal{O}$-basis $\{v_{1},\ldots,v_{n}\}$ of the lattice $\mathcal{L}+(\tau_{1}(\varpi_{D})\otimes\mathrm{Id})\mathcal{L}$ that is an eigenbasis for the operators $\xi_{\lambda}$ and such that some suitable scalar multiples of the $v_{i}$'s form an $\mathcal{O}$-basis of the sub-lattice $\mathcal{L}$. Let $\psi_{i}(\lambda)$ be the eigenvalue for the action of $\xi_{1/\lambda}$ on $v_{i}$. Then $\psi_{i}$ defines a character on $\mathbb{F}_{q^{d}}^{\times}$. Extend $\psi_{i}$ to a function on $\mathbb{F}_{q^{d}}$ by defining $\psi_{i}(0)=0$. 
	
	Let $l+\sideset{}{'}\sum_{x,v}F^{x}_{v}\in(\mathfrak{K}_{0}\cdot L_{1})^{I(1)}$ with $l\in L_{0}$. We write \begin{align*}
	    \sideset{}{'}\sum_{x,v}F^{x}_{v}&=\sum_{x\in \mathbb{F}_{q^{d}}}\left(\sum_{i=1}^{n}a_{x,i}F^{x}_{v_{i}}\right)\\&=\sum_{i=1}^{n}\sum_{x\in \mathbb{F}_{q^{d}}}a_{x,i}F^{x}_{v_{i}}\\&=\sum_{i=1}^{n}\left(\left(\sum_{x\in \mathbb{F}_{q^{d}}}a_{x,i}\right) g^{1}_{v_{i}}+\omega_{\tau_{1}}(-1)\sum_{x\in \mathbb{F}_{q^{d}}}a_{x,i}\sum_{\lambda\in \mathbb{F}_{q^{d}}^{\times}}g^{su_{(1/\lambda)-x}}_{\xi_{\lambda}(v_{i})}\right)\\
	    &=\sum_{i=1}^{n}\left(f^{1}_{\left(\sum_{x\in \mathbb{F}_{q^{d}}}a_{x,i}\right)(v_{i})}+\omega_{\tau_{1}}(-1)\sum_{x,\lambda\in \mathbb{F}_{q^{d}}}a_{x,i}\psi_{i}(\lambda)g^{su_{\lambda-x}}_{v_{i}}\right).
	\end{align*}
	Let us write $a_{x,i}=a_{i}(-x)$ to view it as a function on $\mathbb{F}_{q^{d}}$, and let \[S_{1}=\sum_{i=1}^{n}f^{1}_{\left(\sum_{x\in \mathbb{F}_{q^{d}}}a_{i}(-x)\right)(v_{i})}\hspace{2mm}\text{and}\hspace{2mm}S_{2}=\omega_{\tau_{1}}(-1)\sum_{i=1}^{n}\sum_{x,\lambda\in \mathbb{F}_{q^{d}}}a_{i}(-x)\psi_{i}(\lambda)g^{su_{\lambda-x}}_{v_{i}}.\] Thus \[\sideset{}{'}\sum_{x,v}F^{x}_{v}=S_{1}+S_{2}.\] We may take $l\in\bigoplus_{\lambda\in\mathbb{F}_{q^{d}}}g^{su_{\lambda}}_{\mathcal{L}}$ and thus write \[l=\sum_{i=1}^{n}\sum_{\lambda\in \mathbb{F}_{q^{d}}}b_{i}(\lambda)g^{su_{\lambda}}_{v_{i}}.\] Recall that \[V_{1}=f^{1}_{W_{1}\otimes W_{2}}\oplus f^{s}_{W_{1}\otimes W_{2}}\hspace{2mm}\text{and}\hspace{2mm} L_{1}=f^{1}_{\mathcal{L}}\oplus f^{s}_{\mathcal{L}+(\tau_{1}(\varpi_{D})\otimes\mathrm{Id})\mathcal{L}}.\] Note that $S_{1}$ is invariant under the action of $I(1)$. Therefore, $l+S_{2}$ is also invariant under the action of $I(1)$. Further, the function $l+S_{2}$ is \emph{not} supported on $BI(1)$. Hence $l+S_{2}\in f^{s}_{W_{1}\otimes W_{2}}$. Writing
	\[l+S_{2}=\sum_{y\in \mathbb{F}_{q^{d}}}c_{1}g^{su_{y}}_{v_{1}}+\ldots+\sum_{y\in \mathbb{F}_{q^{d}}}c_{n}g^{su_{y}}_{v_{n}}\] gives that the function \[y\mapsto\omega_{\tau_{1}}(-1)\left(\sum_{x\in \mathbb{F}_{q^{d}}}a_{i}(-x)\psi_{i}(x+y)\right)+b_{i}(y)\hspace{3mm}\text{on $\mathbb{F}_{q^{d}}$}\] is the constant function $c_{i}$ for all $1\leq i\leq n$. Thus $c_{i}\in\mathcal{O}$ for all $i$, and $l+S_{2}=f^{s}_{\sum_{i}c_{i}v_{i}}\in f^{s}_{\mathcal{L}+(\tau_{1}(\varpi_{D})\otimes\mathrm{Id})\mathcal{L}}\subseteq L_{1}$. 
	
	To show that $S_{1}\in L_{1}$, we use Fourier theoretic methods as in \cite[\S 3]{vig08}. We assume that our coefficient field $E$ contains $p$-roots of unity so that there exists a non-trivial additive character $\eta$ on $\mathbb{F}_{q^{d}}$ and the Fourier transform of $\mathcal{O}$-valued functions on $\mathbb{F}_{q^{d}}$ is well-defined. Given a function $f:\mathbb{F}_{q^{d}}\rightarrow\mathcal{O}$, its Fourier transform $\widehat{f}$ is defined to be  \[\widehat{f}(x)=\sum_{y\in\mathbb{F}_{q^{d}}}\eta(xy)f(y).\] Given two functions $f,g:\mathbb{F}_{q^{d}}\rightarrow\mathcal{O}$, their convolution product $f\ast g$ is the function $(f\ast g)(x)=\sum_{x=y+z}f(y)g(z)$. One has $\widehat{\widehat{f}}=q^{d}f$ and $\widehat{f\ast g}=\widehat{f}\widehat{g}$. Let $\Delta$ denote the constant function $1$ on $\mathbb{F}_{q^{d}}$ and $\delta_{0}$ denote the characteristic function of $0$. Then $\widehat{\Delta}=q^{d}\delta_{0}$. In this language, we have $S_{1}=f^{1}_{\sum_{i=1}^{n}\widehat{a_{i}}(0)v_{i}}$. We show that $\widehat{a_{i}}(0)v_{i}\in\mathcal{L}$ for each $1\leq i\leq n$. Indeed, for each $i$, we have from the previous paragraph \begin{equation}\label{eqn4}
		c_{i}\Delta=\omega_{\tau_{1}}(-1)(a_{i}\ast\psi_{i})+b_{i}.
	\end{equation} If $\psi_{i}$ is trivial, then it follows that $c_{i}=\omega_{\tau_{1}}(-1)(\widehat{a_{i}}(0)-a_{i}(y))+b_{i}(y)$ for all $y\in\mathbb{F}_{q^{d}}$. Hence \[(\widehat{a_{i}}(0)-\omega_{\tau_{1}}(-1)c_{i})v_{i}+\omega_{\tau_{1}}(-1)b_{i}(y)v_{i}=a_{i}(y)v_{i}\hspace{2mm}\text{for all $y\in\mathbb{F}_{q^{d}}$}.\] Adding over $y\in\mathbb{F}_{q^{d}}$ gives \[(\widehat{a_{i}}(0)-\omega_{\tau_{1}}(-1)c_{i})q^{d}v_{i}+\omega_{\tau_{1}}(-1)\widehat{b_{i}}(0)v_{i}=\widehat{a_{i}}(0)v_{i}.\] By \eqref{eq3}, $(\widehat{a_{i}}(0)-\omega_{\tau_{1}}(-1)c_{i})q^{d}v_{i}\in\mathcal{L}$. Further, recall that $\sum_{i=1}^{n}b_{i}(y)v_{i}\in\mathcal{L}$. The choice of the basis $\{v_{i}\}$ implies that each $b_{i}(y)v_{i}\in\mathcal{L}$. Thus $\widehat{b_{i}}(0)v_{i}\in\mathcal{L}$. Hence $\widehat{a_{i}}(0)v_{i}\in\mathcal{L}$. If $\psi_{i}$ is non-trivial, then applying the Fourier transform on both sides of \eqref{eqn4} gives \[c_{i}q^{d}\delta_{0}=\omega_{\tau_{1}}(-1)\widehat{a_{i}}\widehat{\psi_{i}}+\widehat{b_{i}}.\] 
	Multiplying both sides by $\widehat{\psi_{i}^{-1}}$ and using that $\widehat{\psi_{i}}\widehat{\psi_{i}^{-1}}=\psi_{i}(-1)q^{d}(\Delta-\delta_{0})$, we get \[c_{i}q^{d}\delta_{0}\widehat{\psi_{i}^{-1}}=\omega_{\tau_{1}}(-1)\psi_{i}(-1)q^{d}(\Delta-\delta_{0})\widehat{a_{i}}+\widehat{b_{i}}\widehat{\psi_{i}^{-1}}.\] Thus 
	\[c_{i}q^{d}\delta_{0}\widehat{\psi_{i}^{-1}}=\omega_{\tau_{1}}(-1)\psi_{i}(-1)q^{d}\widehat{a_{i}}-\omega_{\tau_{1}}(-1)\psi_{i}(-1)q^{d}\delta_{0}\widehat{a_{i}}+\widehat{b_{i}}\widehat{\psi_{i}^{-1}}.\] Rearranging the terms, we have 
	\[q^{d}\widehat{a_{i}}=q^{d}\widehat{a_{i}}(0)\delta_{0}+\widehat{\psi_{i}^{-1}}\widehat{\phi}=q^{d}\widehat{a_{i}}(0)\delta_{0}+\widehat{{\psi_{i}^{-1}}\ast\phi}\] where $\phi=\omega_{\tau_{1}}(-1)\psi_{i}(-1)(c_{i}\Delta-b_{i})$. By the Fourier transform again, we get \[q^{d}a_{i}=\widehat{a_{i}}(0)\Delta+\psi_{i}^{-1}\ast\phi.\] Hence \[\widehat{a_{i}}(0)v_{i}=a_{i}(0)q^{d}v_{i}-(\psi_{i}^{-1}\ast\phi)(0)v_{i}.\] Note that \begin{align*}
		(\psi_{i}^{-1}\ast\phi)(0)v_{i}&=\omega_{\tau_{1}}(-1)\psi_{i}(-1)\left(\sum_{x\in \mathbb{F}_{q^{d}}}\psi_{i}^{-1}(-x)(c_{i}-b_{i}(x))\right)v_{i}\\&=\omega_{\tau_{1}}(-1)\psi_{i}(-1)\left(c_{i}\widehat{\psi_{i}^{-1}}(0)-\sum_{x\in \mathbb{F}_{q^{d}}}\psi_{i}^{-1}(-x)b_{i}(x)\right)v_{i}\\&=-\omega_{\tau_{1}}(-1)\psi_{i}(-1)\sum_{x\in \mathbb{F}_{q^{d}}}\psi_{i}^{-1}(-x)b_{i}(x)v_{i}\in\mathcal{L}.
	\end{align*} Also $q^{d}v_{i}\in\mathcal{L}$ by \eqref{eq3}. Therefore $\widehat{a_{i}}(0)v_{i}\in \mathcal{L}$. 
	
	It follows that  $S_{1}=f^{1}_{\sum_{i=1}^{n}\widehat{a_{i}}(0)v_{i}}\in f^{1}_{\mathcal{L}}\subseteq L_{1}$. Therefore, $l+\sum'_{x,v}F^{x}_{v}=l+S_{1}+S_{2}\in L_{1}$.
	\end{proof}
	
	Let $E$ be large enough to contain $\sqrt{q}$, and let \[\tau_{1}\times\tau_{2}:=\mathrm{Ind}_{B}^{G}\left(\tau_{1}|\cdot|^\frac{1}{2}\otimes\tau_{2}|\cdot|^{-\frac{1}{2}}\right)\] be the normalized parabolic induction over $E$. The integrality criterion in Theorem \ref{main_thm_for_prin_ser} is symmetric for the normalized induction:
	
	\begin{theorem}\label{crit_norm_para_ind}
	Let $\tau_{1}$ and $\tau_{2}$ be smooth irreducible tamely ramified representations of $D^{\times}$. The representation $\tau_{1}\times\tau_{2}$ with integral central character admits an integral structure if and only if $\omega_{\tau_{1}}(\varpi_{F})q^{\frac{d^{2}}2},\omega_{\tau_{2}}(\varpi_{F})q^{\frac{d^{2}}2}\in\mathcal{O}$. 
	\end{theorem}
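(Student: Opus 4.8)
The plan is to deduce this directly from Theorem \ref{main_thm_for_prin_ser} by absorbing the normalizing twist into the inducing characters. Set $\tau_{1}':=\tau_{1}|\cdot|^{\frac12}$ and $\tau_{2}':=\tau_{2}|\cdot|^{-\frac12}$, so that $\tau_{1}\times\tau_{2}=\mathrm{Ind}_{B}^{G}(\tau_{1}'\otimes\tau_{2}')$ by definition. First I would check that $\tau_{1}'$ and $\tau_{2}'$ are again smooth absolutely irreducible tamely ramified representations of $D^{\times}$: twisting by a character preserves absolute irreducibility, and since $\mathrm{Nrd}$ sends $\mathcal{O}_{D}^{\times}$ into $\mathcal{O}_{F}^{\times}$ the absolute value $|\cdot|$ is trivial on $\mathcal{O}_{D}^{\times}\supseteq D(1)$, so the characters $|\cdot|^{\pm\frac12}$ — which are defined over $E$ precisely because $\sqrt{q}\in E$ — are unramified; hence a $D(1)$-fixed vector of $\tau_{i}$ remains a $D(1)$-fixed vector of $\tau_{i}'$.

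Next I would record the relevant central values. Since $\varpi_{F}$ lies in the center $F^{\times}$ of $D^{\times}$ and $|\varpi_{F}|=q^{-d^{2}}$, one has $\omega_{\tau_{1}'}(\varpi_{F})=q^{-\frac{d^{2}}{2}}\omega_{\tau_{1}}(\varpi_{F})$ and $\omega_{\tau_{2}'}(\varpi_{F})=q^{\frac{d^{2}}{2}}\omega_{\tau_{2}}(\varpi_{F})$, while $\omega_{\tau_{1}'}\omega_{\tau_{2}'}=\omega_{\tau_{1}}\omega_{\tau_{2}}$ as characters of $Z\cong F^{\times}$. In particular the central character of $\tau_{1}\times\tau_{2}$ equals $\omega_{\tau_{1}}\omega_{\tau_{2}}$, so the hypothesis ``$\tau_{1}\times\tau_{2}$ has integral central character'' is exactly the hypothesis needed to apply Theorem \ref{main_thm_for_prin_ser} to $\mathrm{Ind}_{B}^{G}(\tau_{1}'\otimes\tau_{2}')$.

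Then I would invoke Theorem \ref{main_thm_for_prin_ser}: $\mathrm{Ind}_{B}^{G}(\tau_{1}'\otimes\tau_{2}')$ is integral if and only if $q^{d^{2}}\omega_{\tau_{1}'}(\varpi_{F})\in\mathcal{O}$ and $\omega_{\tau_{2}'}(\varpi_{F})\in\mathcal{O}$. Substituting the values above, $q^{d^{2}}\omega_{\tau_{1}'}(\varpi_{F})=q^{\frac{d^{2}}{2}}\omega_{\tau_{1}}(\varpi_{F})$ and $\omega_{\tau_{2}'}(\varpi_{F})=q^{\frac{d^{2}}{2}}\omega_{\tau_{2}}(\varpi_{F})$, so the criterion becomes $\omega_{\tau_{1}}(\varpi_{F})q^{\frac{d^{2}}{2}},\,\omega_{\tau_{2}}(\varpi_{F})q^{\frac{d^{2}}{2}}\in\mathcal{O}$, which is the asserted symmetric condition. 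There is no genuine obstacle here: all the work lies in Theorem \ref{main_thm_for_prin_ser}, and the present statement is a bookkeeping reformulation observing that the criterion becomes symmetric in $\tau_{1}$ and $\tau_{2}$ once phrased via the normalized induction; the only points deserving a word of care are that $\sqrt{q}\in E$ (so the twist is defined) and that the twist does not disturb tame ramification.
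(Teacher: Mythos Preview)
Your proposal is correct and matches the paper's approach exactly: the paper does not even write out a proof, merely noting that ``the integrality criterion in Theorem \ref{main_thm_for_prin_ser} is symmetric for the normalized induction,'' which is precisely the substitution $\tau_i'=\tau_i|\cdot|^{\pm 1/2}$ you carry out. Your additional remarks (that the twist preserves tame ramification and that $\sqrt{q}\in E$ is needed) make explicit what the paper leaves implicit.
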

	
	As a consequence of Theorem \ref{crit_norm_para_ind}, we obtain that when $\tau_{1}\times\tau_{2}$ with integral central character is reducible, its irreducible subquotients are always integral. Indeed, enlarging $E$ if necessary, we assume that all irreducible subquotients of $\tau_{1}\times\tau_{2}$ are defined over $E$. We recall a result of Tadi\'{c} which says that $(\tau_{1}\times\tau_{2})_{\mathbb{C}}$, which is isomorphic to $(\tau_{1})_{\mathbb{C}}\times(\tau_{2})_{\mathbb{C}}$, is reducible if and only if $(\tau_{2})_{\mathbb{C}}\cong(\tau_{1})_{\mathbb{C}}|\cdot|^{\pm\frac{a}d}_{\mathbb{C}}$, and in this case, it is multiplicity-free and has length $2$ \cite[Lemmas 2.5, 4.2, Proposition 4.3]{tad90}. Here $a$ is the length of the segment of the essentially square-integrable representation of $\mathrm{GL}_{d}(F)$ associated to $(\tau_{1})_{\mathbb{C}}$ under the Jacquet-Langlands correspondence. It follows that $\tau_{1}\times\tau_{2}$ is reducible over $E$ if and only if $\tau_{2}\cong\tau_{1}|\cdot|^{\pm\frac{a}{d}}$ as $E$-linear representations (again after enlarging $E$ if necessary). If $\tau_{1}$ is tamely ramified of dimension  $d'$, then $d=ad'$.
	
	Let $\tau=\tau_{1}|\cdot|^{\frac{a}{2d}}$. Denoting by $\mathrm{St}(\tau)$ and $\mathrm{Sp}(\tau)$ the $E$-models of the \emph{Steinberg} $\mathrm{St}(\tau_{\mathbb{C}})$ and the \emph{Speh} $\mathrm{Sp}(\tau_{\mathbb{C}})$ representation respectively, one has the following short exact sequence of smooth $E$-linear representations 
	\begin{equation}\label{spstseq}
	0\longrightarrow\mathrm{Sp}(\tau)\longrightarrow\tau|\cdot|^{-\frac{a}{2d}}\times\tau|\cdot|^{\frac{a}{2d}}\longrightarrow\mathrm{St}(\tau)\longrightarrow 0
	\end{equation} (see \cite[Theorem 2.2]{rag07}).
	We remark that $\mathrm{Sp}(\tau)$ is infinite-dimensional if and only if $\tau$ has dimension $>1$ \cite[Remark 2.4]{rag07}. 
	
	\begin{theorem}\label{main_thm_for_st_and_sp}
	Let $\tau$ be a smooth absolutely irreducible tamely ramified representation of $D^{\times}$ over $E$. The representation $\mathrm{St}(\tau)$ with integral central character admits an integral structure. The representation $\mathrm{Sp}(\tau)$ with integral central character admits an integral structure.
	\end{theorem}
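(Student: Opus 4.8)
The plan is to deduce both statements from Theorem \ref{crit_norm_para_ind} by realizing $\mathrm{St}(\tau)$ and $\mathrm{Sp}(\tau)$ as a quotient and a subrepresentation of one and the same tamely ramified principal series. First I would enlarge $E$ so that it contains $\sqrt{q}$ and set $\tau' := \tau|\cdot|^{-\frac{a}{2d}}$ and $\tau'' := \tau|\cdot|^{\frac{a}{2d}}$; since $|\cdot|$ is an unramified character of $D^{\times}$, being trivial on $\mathcal{O}_{D}^{\times}\supseteq D(1)$, the twists $\tau'$ and $\tau''$ are again smooth absolutely irreducible tamely ramified representations of $D^{\times}$ over $E$. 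Let $P := \tau'\times\tau''$. The exact sequence \eqref{spstseq} exhibits $\mathrm{Sp}(\tau)$ as a subrepresentation of $P$ and $\mathrm{St}(\tau)$ as a quotient of $P$. An integral structure $P^{\circ}$ of $P$ then yields $P^{\circ}\cap\mathrm{Sp}(\tau)$, a $G$-stable $\mathcal{O}$-submodule spanning $\mathrm{Sp}(\tau)$ over $E$, and the image of $P^{\circ}$ in $\mathrm{St}(\tau)$, a $G$-stable $\mathcal{O}$-submodule spanning $\mathrm{St}(\tau)$ over $E$; both are automatically free over $\mathcal{O}$ because $\mathrm{Sp}(\tau)$ and $\mathrm{St}(\tau)$ are admissible and a $G$-stable lattice in an admissible smooth $E$-representation is automatically $\mathcal{O}$-free. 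So it suffices to show that $P$ is integral.

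Next I would check the hypotheses of Theorem \ref{crit_norm_para_ind} for $P$. Since the twists by $|\cdot|^{\mp a/2d}$ cancel on the center $Z\cong F^{\times}$, one has $\omega_{P} = \omega_{\mathrm{St}(\tau)} = \omega_{\mathrm{Sp}(\tau)} = (\omega_{\tau}|_{F^{\times}})^{2}$. Thus the hypothesis that $\mathrm{St}(\tau)$ (resp. $\mathrm{Sp}(\tau)$) has integral central character means exactly that $\omega_{\tau}(\varpi_{F})^{2}\in\mathcal{O}^{\times}$, which forces $\mathrm{val}_{E}(\omega_{\tau}(\varpi_{F})) = 0$, i.e. $\omega_{\tau}(\varpi_{F})\in\mathcal{O}^{\times}$; in particular $P$ has integral central character.

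Then comes the numerical check. Using $|\varpi_{F}| = q^{-d^{2}}$ on $D^{\times}$ one computes $\omega_{\tau'}(\varpi_{F}) = \omega_{\tau}(\varpi_{F})\,q^{\frac{ad}{2}}$ and $\omega_{\tau''}(\varpi_{F}) = \omega_{\tau}(\varpi_{F})\,q^{-\frac{ad}{2}}$, hence
\[
\omega_{\tau'}(\varpi_{F})\,q^{\frac{d^{2}}{2}} = \omega_{\tau}(\varpi_{F})\,q^{\frac{d(d+a)}{2}}, \qquad \omega_{\tau''}(\varpi_{F})\,q^{\frac{d^{2}}{2}} = \omega_{\tau}(\varpi_{F})\,q^{\frac{d(d-a)}{2}}.
\]
Since $\tau$ is absolutely irreducible tamely ramified of some dimension $d'\mid d$ and $a = d/d'$, we have $1\le a\le d$, so both exponents $\tfrac{d(d+a)}{2}$ and $\tfrac{d(d-a)}{2}$ are nonnegative integers; together with $\omega_{\tau}(\varpi_{F})\in\mathcal{O}^{\times}$ this shows that both quantities lie in $\mathcal{O}$. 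By Theorem \ref{crit_norm_para_ind}, $P$ is integral, and therefore so are $\mathrm{St}(\tau)$ and $\mathrm{Sp}(\tau)$.

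I do not expect a genuine obstacle: this is essentially a formal corollary of Theorem \ref{crit_norm_para_ind}. The only points that need care are (i) that integrality descends to the subrepresentation and to the quotient with the freeness condition intact, which is handled by admissibility, and (ii) the precise translation of ``$\mathrm{St}(\tau)$ (resp. $\mathrm{Sp}(\tau)$) has integral central character'' into the symmetric criterion of Theorem \ref{crit_norm_para_ind}, which rests on the inequality $a\le d$ — equivalently, on the redundancy of the second half of \eqref{emerton_condition_st_sp} already observed in the introduction.
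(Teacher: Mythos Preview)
Your proof is correct and follows essentially the same route as the paper: verify the symmetric criterion of Theorem~\ref{crit_norm_para_ind} for $\Pi=\tau|\cdot|^{-a/2d}\times\tau|\cdot|^{a/2d}$ using $d\pm a\ge 0$ and $\omega_{\tau}(\varpi_{F})\in\mathcal{O}^{\times}$, and then pass to the subquotients $\mathrm{Sp}(\tau)$ and $\mathrm{St}(\tau)$.

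The only point of divergence is the treatment of $\mathrm{St}(\tau)$. You take it as a \emph{quotient} of $\Pi$ and invoke the (true, but not entirely trivial) fact that an integral structure in an admissible smooth representation induces one on a quotient---the delicate step being that the image of $P^{\circ}$ contains no $E$-line, after which freeness follows from admissibility as you say. The paper instead appeals to the sequence dual to~\eqref{spstseq}, which realizes $\mathrm{St}(\tau)$ as a \emph{subrepresentation} of the swapped principal series $\tau|\cdot|^{a/2d}\times\tau|\cdot|^{-a/2d}$; this swapped induction satisfies the identical symmetric hypotheses of Theorem~\ref{crit_norm_para_ind}, so it is integral, and then one only needs the sub case, where freeness is immediate because submodules of free modules over the PID~$\mathcal{O}$ are free. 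Your route is perfectly valid; the paper's is slightly cleaner on the module-theoretic bookkeeping.
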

	
	\begin{proof}
	Let $\Pi=\tau|\cdot|^{-\frac{a}{2d}}\times\tau|\cdot|^{\frac{a}{2d}}$. 
	From the sequence \eqref{spstseq}, we see that $\omega_{\mathrm{Sp}(\tau)}=\omega_{\mathrm{St}(\tau)}=\omega_{\Pi}=\omega_{\tau}^{2}$. If $\omega_{\mathrm{Sp}(\tau)}$ is integral, then $\omega_{\tau}(\varpi_{F})\in\mathcal{O}^{\times}$ and thus $\omega_{\Pi}$ is integral. Further, note that \[\text{$\omega_{\tau}(\varpi_{F})q^{\frac{d(d+a)}{2}},\omega_{\tau}(\varpi_{F})q^{\frac{d(d-a)}{2}}\in\mathcal{O}$ because $d+a\geq 0$ and $d-a\geq 0$.}\] Hence, by Theorem \ref{crit_norm_para_ind}, $\Pi$ has an integral structure and thus its irreducible subrepresentation $\mathrm{Sp}(\tau)$ also has an integral structure. One similarly shows that $\mathrm{St}(\tau)$ with integral central character $\omega_{\mathrm{St}(\tau)}$ has an integral structure by considering the exact sequence dual to \eqref{spstseq}. 
	\end{proof}
	
	\begin{corollary} Let $\tau$ be a smooth absolutely irreducible tamely ramified representation of $D^{\times}$ over $E$. Then $\mathrm{St}(\tau)$ is integral if and only if $\mathrm{Sp}(\tau)$ is integral.\hspace{4.85cm}\qedsymbol
	\end{corollary}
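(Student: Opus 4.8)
The plan is to deduce the statement formally from Theorem \ref{main_thm_for_st_and_sp} together with the necessity half of Emerton's condition \eqref{emerton_condition}; there is essentially no computation involved. The one observation that makes everything work is that $\mathrm{St}(\tau)$ and $\mathrm{Sp}(\tau)$ have \emph{the same} central character, so that the hypothesis ``with integral central character'' in Theorem \ref{main_thm_for_st_and_sp} is literally the same hypothesis for both representations.

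Concretely, I would first record that $\omega_{\mathrm{St}(\tau)} = \omega_{\mathrm{Sp}(\tau)} = \omega_{\tau}^{2}$, which is immediate from the short exact sequence \eqref{spstseq} (both $\mathrm{Sp}(\tau)$ and $\mathrm{St}(\tau)$ are subquotients of $\tau|\cdot|^{-\frac{a}{2d}}\times\tau|\cdot|^{\frac{a}{2d}}$, whose central character is $\omega_{\tau}^{2}$), exactly as noted in the proof of Theorem \ref{main_thm_for_st_and_sp}. In particular the assertions ``$\omega_{\mathrm{St}(\tau)}$ is integral'' and ``$\omega_{\mathrm{Sp}(\tau)}$ is integral'' are one and the same. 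Next I would invoke the two directions linking integrality of a representation to integrality of its central character: on one side, if $\mathrm{St}(\tau)$ (resp. $\mathrm{Sp}(\tau)$) admits an integral structure, then taking $P=G$ in \eqref{emerton_condition} --- the case in which the condition says exactly that the central character is integral, as recalled in the introduction --- the character $\omega_{\mathrm{St}(\tau)}$ (resp. $\omega_{\mathrm{Sp}(\tau)}$) is integral; on the other side, Theorem \ref{main_thm_for_st_and_sp} provides the converse, that integrality of $\omega_{\mathrm{St}(\tau)}$ (resp. $\omega_{\mathrm{Sp}(\tau)}$) is already sufficient for the existence of an integral structure. Chaining these gives $\mathrm{St}(\tau)$ integral $\Rightarrow$ $\omega_{\tau}^{2}$ integral $\Rightarrow$ $\mathrm{Sp}(\tau)$ integral, and the reverse implication is obtained by the symmetric chain.

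The hard part, such as it is, has already been done: it lives entirely in Theorem \ref{main_thm_for_st_and_sp}, and the corollary is just the remark that the resulting numerical criterion (integrality of $\omega_{\tau}^{2}$) does not see the difference between $\mathrm{St}(\tau)$ and $\mathrm{Sp}(\tau)$. The only thing to check with any care is that both representations genuinely satisfy the hypotheses of Theorem \ref{main_thm_for_st_and_sp}, which is automatic since $\tau$ is assumed smooth, absolutely irreducible and tamely ramified.
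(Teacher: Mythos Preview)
Your proposal is correct and is exactly the argument the paper intends: the corollary is stated with only a \qedsymbol, meaning it is taken as immediate from Theorem \ref{main_thm_for_st_and_sp} together with the equality $\omega_{\mathrm{St}(\tau)}=\omega_{\mathrm{Sp}(\tau)}=\omega_{\tau}^{2}$ already noted there. The only minor remark is that the necessity of the integral central character is in fact elementary (if $\pi$ has a $G$-lattice then $Z$ preserves it, forcing $\omega_{\pi}$ to be $\mathcal{O}^{\times}$-valued) and does not require invoking the full Jacquet-module formalism, but your citation of \eqref{emerton_condition} with $P=G$ is perfectly valid.
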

	
	\section{Integrality of locally algebraic representations}\label{section4}
	
	In this section, $\pi=\pi_{sm}\otimes\pi_{alg}$ where $\pi_{alg}$ is a non-trivial irreducible algebraic representation of $G$  over $E$. We begin with a simple generalization of \cite[Proposition 2.2]{hu20} of Hu on diagrams of $k$-vector spaces with trivial $0$-th homology.
	We say that a diagram $\mathcal{D}_{1}\xrightarrow{r}\mathcal{D}_{0}$ admits a central character if $Z$ acts on $\mathcal{D}_{0}$ and $\mathcal{D}_{1}$ by a character.
	
	\begin{lemma}\label{h0diagrams}
	Let $\mathcal{D}$ be a diagram $\mathcal{D}_{1}\xrightarrow{r}\mathcal{D}_{0}$ of smooth $k$-representations admitting a central character such that $H_{0}(\mathcal{D})=0$ and $\mathcal{D}_{1}$ is an irreducible representation of $\mathfrak{K}_{1}$. Then \[\mathrm{dim}_{k}\mathcal{D}_{0}\leq\frac{q^{d}+1}{2}\mathrm{dim}_{k}\mathcal{D}_{1}.\] Moreover, if $\mathrm{dim}_{k}\mathcal{D}_{0}=\frac{q^{d}+1}{2}\mathrm{dim}_{k}\mathcal{D}_{1}$, then $\mathcal{D}_{0}\cong\mathrm{Ind}_{I\varpi_{D}^{\mathbb{Z}}}^{\mathfrak{K}_{0}}r(\mathcal{D}_{1})$.
	\end{lemma}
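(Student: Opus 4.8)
The plan is to adapt the proof of \cite[Proposition 2.2]{hu20}, replacing the prime $p$ there by $q^{d}$: the only geometric input is that the standard vertex $v_{0}$ of the Bruhat--Tits tree of $G$ has $q^{d}+1$ neighbours, parametrised by $\mathfrak{K}_{0}/(\mathfrak{K}_{0}\cap\mathfrak{K}_{1})=\mathfrak{K}_{0}/I\varpi_{D}^{\mathbb{Z}}\cong\mathbb{P}^{1}(\mathbb{F}_{q^{d}})$, so the local combinatorics are unchanged. \emph{Step 1 (a cyclicity statement).} Since $H_{0}(\mathcal{D})=0$ the boundary map $\partial$ is surjective. Put $W:=r(\mathcal{D}_{1})\subseteq\mathcal{D}_{0}$, which is an $I\varpi_{D}^{\mathbb{Z}}$-submodule as $r$ is $I\varpi_{D}^{\mathbb{Z}}$-equivariant, and let $U:=\mathfrak{K}_{0}\cdot W$. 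The inclusion of diagrams $(\mathcal{D}_{1}\xrightarrow{r}U)\hookrightarrow(\mathcal{D}_{1}\xrightarrow{r}\mathcal{D}_{0})$ has quotient diagram $(0\to\mathcal{D}_{0}/U)$, and the long exact homology sequence of the corresponding short exact sequence of coefficient systems yields a surjection $0=H_{0}(\mathcal{D})\twoheadrightarrow H_{0}(0\to\mathcal{D}_{0}/U)=\text{c-Ind}_{\mathfrak{K}_{0}}^{G}(\mathcal{D}_{0}/U)$; hence $\mathcal{D}_{0}/U=0$, i.e. $\mathcal{D}_{0}=\mathfrak{K}_{0}\cdot r(\mathcal{D}_{1})$. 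Consequently the canonical $\mathfrak{K}_{0}$-equivariant map $\mathrm{Ind}_{I\varpi_{D}^{\mathbb{Z}}}^{\mathfrak{K}_{0}}r(\mathcal{D}_{1})\to\mathcal{D}_{0}$ is surjective, so $\dim_{k}\mathcal{D}_{0}\leq(q^{d}+1)\dim_{k}r(\mathcal{D}_{1})$.

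\emph{Step 2 (the factor $\tfrac12$).} If $\mathcal{D}_{0}=0$ the inequality is trivial, so assume $\mathcal{D}_{0}\neq0$; I claim $r$ is not injective. Suppose it were. Then both endpoint maps of the standard edge, namely $r$ and $v\mapsto r(t^{-1}v)$ (whose kernel is $t\cdot\ker r$), are injective, and hence so is every endpoint map of every edge, being a $G$-translate of one of these. Let $\xi$ be a finitely supported $1$-chain with $\partial\xi$ supported on $v_{0}$ and let $n\geq0$ be the maximal distance from $v_{0}$ of an edge in $\mathrm{supp}(\xi)$. If $n\geq1$, pick such an edge $f$ at distance $n$ with far endpoint $w$ (at distance $n+1$): since $f$ is the only edge of $\mathrm{supp}(\xi)$ incident to $w$, the $w$-component of $\partial\xi$ is the injective far endpoint map of $f$ applied to the coefficient of $f$, and it must vanish because $w\neq v_{0}$, forcing that coefficient to be $0$ --- a contradiction. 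So $\xi$ is supported on the $q^{d}+1$ edges incident to $v_{0}$; writing $\partial$ on such $\xi$ via $\partial([1,v])=[1,r(v)]-[t,r(t^{-1}v)]$ and noting that the ``far'' contributions sit at the $q^{d}+1$ distinct neighbours of $v_{0}$, injectivity of $v\mapsto r(t^{-1}v)$ forces $\xi=0$, hence $\partial\xi=0$. Thus the summand of $\text{c-Ind}_{\mathfrak{K}_{0}}^{G}\mathcal{D}_{0}$ supported on $\mathfrak{K}_{0}$ meets $\mathrm{im}\,\partial$ trivially, so $\mathcal{D}_{0}$ embeds into $H_{0}(\mathcal{D})=0$ --- impossible. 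Therefore $\ker r\neq0$, while $\ker r\neq\mathcal{D}_{1}$ (otherwise $r=0$, $\partial=0$, $H_{0}(\mathcal{D})=\text{c-Ind}_{\mathfrak{K}_{0}}^{G}\mathcal{D}_{0}\neq0$). Now apply Clifford theory to the open normal subgroup $I\varpi_{D}^{\mathbb{Z}}$ of index $2$ in $\mathfrak{K}_{1}$ (with $t$ representing the nontrivial coset): $\mathcal{D}_{1}|_{I\varpi_{D}^{\mathbb{Z}}}$ is semisimple and is either irreducible, or $\sigma\oplus\sigma^{t}$ with $\sigma\not\cong\sigma^{t}$, or $\sigma^{\oplus2}$ with $\sigma\cong\sigma^{t}$; in each of the latter two cases every proper nonzero submodule has dimension exactly $\tfrac12\dim_{k}\mathcal{D}_{1}$ (the first case is excluded by the existence of such a $\ker r$). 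Hence $\dim_{k}r(\mathcal{D}_{1})=\tfrac12\dim_{k}\mathcal{D}_{1}$, and combining with Step 1 gives $\dim_{k}\mathcal{D}_{0}\leq\tfrac{q^{d}+1}{2}\dim_{k}\mathcal{D}_{1}$.

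\emph{Step 3 (equality) and the main difficulty.} If $\dim_{k}\mathcal{D}_{0}=\tfrac{q^{d}+1}{2}\dim_{k}\mathcal{D}_{1}$ then (the case $\mathcal{D}_{1}=0$ being trivial) both inequalities above are equalities, so the surjection $\mathrm{Ind}_{I\varpi_{D}^{\mathbb{Z}}}^{\mathfrak{K}_{0}}r(\mathcal{D}_{1})\twoheadrightarrow\mathcal{D}_{0}$ of Step 1 has source and target of the same dimension, hence is an isomorphism, proving $\mathcal{D}_{0}\cong\mathrm{Ind}_{I\varpi_{D}^{\mathbb{Z}}}^{\mathfrak{K}_{0}}r(\mathcal{D}_{1})$. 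I expect the only real point to be the non-injectivity of $r$ in Step 2: this is where the acyclicity (absence of cycles) of the Bruhat--Tits tree enters essentially, and it is precisely what improves the naive bound $q^{d}+1$ to $\tfrac{q^{d}+1}{2}$; the remaining ingredients --- the homology long exact sequence, the surjection from the induced module, and the (standard) structure of $\mathcal{D}_{1}|_{I\varpi_{D}^{\mathbb{Z}}}$ provided by Clifford's theorem, including the multiplicity-free and isotypic cases --- are routine.
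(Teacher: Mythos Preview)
Your proof is correct and reaches the same conclusion as the paper, but Step~2 proceeds by a genuinely different mechanism. The paper does not invoke Clifford theory at all: after asserting $\ker r\neq 0$ (which you prove carefully via the tree), it picks an $I/I(1)$-eigenvector $v\in\ker r$, sets $\mathcal{D}_1^0=\mathrm{span}\{\varpi_D^i v:0\le i\le d-1\}$, and observes that $\mathcal{D}_1=\mathcal{D}_1^0+t\mathcal{D}_1^0$ (using irreducibility and that $t$ normalises $I$ while $t^{2d}\in Z$), so $\dim\mathcal{D}_1^0\ge\tfrac12\dim\mathcal{D}_1$; since $\mathcal{D}_1^0\subseteq\ker r$ by $\varpi_D^{\mathbb Z}$-equivariance, this yields only the inequality $\dim r(\mathcal{D}_1)\le\tfrac12\dim\mathcal{D}_1$. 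Your route is more structural: Clifford's theorem for the index-$2$ normal subgroup $I\varpi_D^{\mathbb Z}\lhd\mathfrak{K}_1$ forces $\mathcal{D}_1|_{I\varpi_D^{\mathbb Z}}$ to be semisimple with at most two simple summands (via the embedding $\mathcal{D}_1\hookrightarrow\mathrm{Ind}_{I\varpi_D^{\mathbb Z}}^{\mathfrak{K}_1}\sigma$), so any proper nonzero submodule --- in particular $\ker r$ --- has dimension exactly $\tfrac12\dim\mathcal{D}_1$. Your argument thus gives the sharper equality $\dim r(\mathcal{D}_1)=\tfrac12\dim\mathcal{D}_1$ and works uniformly without singling out an eigenvector, at the cost of appealing to Clifford theory (note, incidentally, that your case $\sigma^{\oplus 2}$ does not actually occur, though including it is harmless). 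The paper's argument is more elementary and self-contained, exploiting directly the concrete shape of $\mathfrak{K}_1=\langle I,t\rangle$ with $t^2=\varpi_D$. Steps~1 and~3 in your proposal match the paper's reasoning exactly.
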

	
	\begin{proof}
	Since $H_{0}(\mathcal{D})=0$, $\mathrm{Ker}(r)\neq 0$. Pick a non-zero $I/I(1)$-eigenvector $v\in\mathrm{Ker}(r)$. Let $\mathcal{D}'\subseteq\mathcal{D}$ be the subdiagram $\mathcal{D}'_{1}\xrightarrow{r'}\mathcal{D}'_{0}$ where \begin{equation}\label{gradedpieces}
	    \mathcal{D}'_{1}=\mathfrak{K}_{1}\cdot v, \hspace{1mm} \mathcal{D}_{0}'=\mathfrak{K}_{0}\cdot r(\mathcal{D}_{1}'), \hspace{1mm} \mathrm{and} \hspace{1mm} r'=r|_{\mathcal{D}_{1}'}.
	\end{equation} Since $\mathcal{D}_{1}$ is irreducible, $\mathcal{D}_{1}/\mathcal{D}_{1}'=0$. Further, $H_{0}(\mathcal{D})=0$ implies that $H_{0}(\mathcal{D}/\mathcal{D}')=0$. Therefore, $\mathcal{D}_{0}/\mathcal{D}_{0}'=0$. Consequently, $\mathcal{D}'=\mathcal{D}$, and thus there is a surjection \begin{equation}\label{surjection}
	    \mathrm{Ind}_{I\varpi_{D}^{\mathbb{Z}}}^{\mathfrak{K}_{0}}r(\mathcal{D}_{1})\twoheadrightarrow\mathcal{D}_{0}.
	\end{equation} 
	Let $\mathcal{D}_{1}^{0}\subseteq\mathcal{D}_{1}$ be the $k$-span of the vectors $\varpi_{D}^{i}v$ for $0\leq i\leq d-1$. Then $\mathcal{D}_{1}=\mathcal{D}_{1}^{0}+t\mathcal{D}_{1}^{0}$. Note that $t$ is a linear isomorphism. So \[\frac{\mathrm{dim}_{k}\mathcal{D}_{1}}{2}\leq \mathrm{dim}_{k}\mathcal{D}_{1}^{0}.\] Moreover, as $r(v)=0$ and $r$ is $\varpi_{D}^{\mathbb{Z}}$-linear, we have $\mathcal{D}_{1}^{0}\subseteq\mathrm{Ker}(r)$. Thus \[\frac{\mathrm{dim}_{k}\mathcal{D}_{1}}{2}\leq\mathrm{dim}_{k}\mathcal{D}_{1}^{0}\leq \mathrm{dim}_{k}\mathrm{Ker}(r).\] 
	Therefore, $\mathrm{dim}_{k}\mathcal{D}_{1}=\mathrm{dim}_{k}r(\mathcal{D}_{1})+\mathrm{dim}_{k}\mathrm{Ker}(r)\geq\mathrm{dim}_{k}r(\mathcal{D}_{1})+\frac{\mathrm{dim}_{k}\mathcal{D}_{1}}{2}$. Hence $\mathrm{dim}_{k}r(\mathcal{D}_{1})\leq\frac{\mathrm{dim}_{k}\mathcal{D}_{1}}{2}$. Now it follows from \eqref{surjection} that 
	\[\mathrm{dim}_{k}\mathcal{D}_{0}\leq\frac{q^{d}+1}{2}\mathrm{dim}_{k}\mathcal{D}_{1}\]
	because $[\mathfrak{K}_{0}:I\varpi_{D}^{\mathbb{Z}}]=q^{d}+1$.
	
	If $\mathrm{dim}_{k}\mathcal{D}_{0}=\frac{q^{d}+1}{2}\mathrm{dim}_{k}\mathcal{D}_{1}$, then from \eqref{surjection}, we get that $\frac{\mathrm{dim}_{k}\mathcal{D}_{1}}{2}\leq\mathrm{dim}_{k}r(\mathcal{D}_{1})$. By the previous paragraph, this implies $\frac{\mathrm{dim}_{k}\mathcal{D}_{1}}{2}=\mathrm{dim}_{k}r(\mathcal{D}_{1})$ and thus $\mathcal{D}_{0}\cong\mathrm{Ind}_{I\varpi_{D}^{\mathbb{Z}}}^{\mathfrak{K}_{0}}r(\mathcal{D}_{1})$.
	\end{proof}
	
	\begin{remark}\label{rmkh0diagrams}
	In the above lemma, if $\mathcal{D}_{1}$ is not irreducible, then the diagram $\mathcal{D}$ has a filtration by subdiagrams whose graded pieces are the diagrams of the form \eqref{gradedpieces}. Thus the same dimension relation as in the lemma holds if $\mathrm{dim}_{k}\mathcal{D}_{1}<\infty$. Further, if $\mathrm{dim}_{k}\mathcal{D}_{0}=\frac{q^{d}+1}{2}\mathrm{dim}_{k}\mathcal{D}_{1}$, then for any graded piece $\mathcal{Q}_{1}\xrightarrow{\overline{r}}\mathcal{Q}_{0}$ of $\mathcal{D}$, $\mathcal{Q}_{0}\cong\mathrm{Ind}_{I\varpi_{D}^{\mathbb{Z}}}^{\mathfrak{K}_{0}}\overline{r}(\mathcal{Q}_{1})$.
	\end{remark}
	
	The next lemma is well-known: 
	\begin{lemma}\label{finite_homothey_classes_lemma}
	    Suppose a group $H$ acting on a finite-dimensional $\mathbb{Q}_{p}$-vector space $V$ stabilizes a lattice in $V$. Then $H$ stabilizes finitely many homothety classes of lattices in $V$ if and only if its action on $V$ is irreducible.
	\end{lemma}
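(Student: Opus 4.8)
The plan is to treat the two implications separately, using throughout that the $H$-stable lattices in $V$ are exactly the finitely generated $\mathbb{Z}_{p}[H]$-submodules of $V$ that span $V$ over $\mathbb{Q}_{p}$, and that the hypothesis supplies at least one such lattice $L_{0}$.

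For the implication ``$H$ stabilizes finitely many homothety classes $\implies$ $V$ irreducible'' I would argue by contraposition. Assume $0\neq W\subsetneq V$ is an $H$-stable subspace. Write $\overline{L_{0}}$ for the image of $L_{0}$ in $V/W$ and, for $n\geq 0$, let $M_{n}\subseteq L_{0}$ be the preimage of $p^{n}\overline{L_{0}}$ under the $H$-equivariant surjection $L_{0}\twoheadrightarrow\overline{L_{0}}$. Each $M_{n}$ is an $H$-stable lattice sitting in a short exact sequence $0\to L_{0}\cap W\to M_{n}\to p^{n}\overline{L_{0}}\to 0$, so $M_{n}\cap W=L_{0}\cap W$ for every $n$, while $M_{0}\supsetneq M_{1}\supsetneq\cdots$ because $V/W\neq 0$. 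If $M_{m}=cM_{n}$ with $c\in\mathbb{Q}_{p}^{\times}$ and $m>n$, then intersecting with the $c$-stable subspace $W$ gives $L_{0}\cap W=c(L_{0}\cap W)$, which forces $c\in\mathbb{Z}_{p}^{\times}$ and hence $M_{m}=M_{n}$, a contradiction. So the $M_{n}$ are pairwise non-homothetic and $H$ stabilizes infinitely many homothety classes.

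For the converse, assume the action is irreducible, so $V$ is a simple $\mathbb{Q}_{p}[H]$-module and $\mathbb{Q}_{p}[H]v=V$ for every nonzero $v$. Fix an $H$-stable lattice $L_{0}$. For $v\in L_{0}\setminus pL_{0}$ the module $\mathbb{Z}_{p}[H]v$ is a lattice (a finitely generated $\mathbb{Z}_{p}$-submodule of $L_{0}$ spanning $V$), so there is a least $N_{v}$ with $p^{N_{v}}L_{0}\subseteq\mathbb{Z}_{p}[H]v$. The crucial point is that $v\mapsto N_{v}$ is bounded on the compact set $L_{0}\setminus pL_{0}$: if $v'\in v+p^{N_{v}+1}L_{0}$, then using $\mathbb{Z}_{p}[H]L_{0}=L_{0}$ one gets $\mathbb{Z}_{p}[H]v\subseteq\mathbb{Z}_{p}[H]v'+p^{N_{v}+1}L_{0}$, hence $p^{N_{v}}L_{0}\subseteq\mathbb{Z}_{p}[H]v'+p\cdot(p^{N_{v}}L_{0})$, and Nakayama's lemma yields $p^{N_{v}}L_{0}\subseteq\mathbb{Z}_{p}[H]v'$; thus $N_{\bullet}\leq N_{v}$ on the open neighbourhood $v+p^{N_{v}+1}L_{0}$, and a finite subcover gives a uniform bound $N$. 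Now let $L$ be any $H$-stable lattice; rescaling, we may assume $L\subseteq L_{0}$ and $L\not\subseteq pL_{0}$, and choosing $v\in L\setminus pL_{0}$ we obtain $p^{N}L_{0}\subseteq\mathbb{Z}_{p}[H]v\subseteq L\subseteq L_{0}$. Hence every homothety class of $H$-stable lattices has a representative among the subgroups of the finite group $L_{0}/p^{N}L_{0}$, so there are only finitely many.

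I expect the converse to be the main obstacle; within it, the pointwise bound $p^{N_{v}}L_{0}\subseteq\mathbb{Z}_{p}[H]v$ is immediate from simplicity of $V$, but making $N$ independent of $v$ requires the Nakayama-plus-compactness argument above. Alternatively, the converse can be deduced from a form of the Jordan--Zassenhaus theorem applied to the $\mathbb{Z}_{p}$-order $\mathbb{Z}_{p}[H]$ in the simple algebra $\mathbb{Q}_{p}[H]/\mathrm{Ann}(V)$, combined with the finiteness of the relevant unit groups modulo scalars; but the elementary route above is self-contained. The forward implication is a routine explicit construction.
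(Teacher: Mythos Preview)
Your proof is correct. For the forward implication your construction is essentially the paper's, phrased intrinsically: the paper picks coordinates so that the image of $H$ in $\mathrm{GL}_n(\mathbb Z_p)$ lies in a standard parabolic and rescales one block by $p^{-m}$, whereas you work with the quotient map $L_0\twoheadrightarrow L_0/(L_0\cap W)$ and pull back $p^n\overline{L_0}$; these are dual descriptions of the same family of non-homothetic $H$-stable lattices.

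For the converse the two arguments are genuinely different. The paper argues by contraposition: given infinitely many $H$-stable homothety classes with integral representatives $g_\alpha L_0\subseteq L_0$, it shows that some coordinate projection of $\bigcap_\alpha g_\alpha L_0$ vanishes, so the $\mathbb Q_p$-span of this $H$-stable intersection is a proper invariant subspace. Your argument is direct and quantitative: irreducibility gives $p^{N_v}L_0\subseteq\mathbb Z_p[H]v$ for each $v\in L_0\setminus pL_0$, the Nakayama step makes $v\mapsto N_v$ locally bounded, and compactness of $L_0\setminus pL_0$ yields a uniform $N$, whence every $H$-stable lattice has a representative trapped between $p^NL_0$ and $L_0$. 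Your route is a little longer but entirely self-contained, gives an explicit bound on the number of classes, and sidesteps the verification (left implicit in the paper's approach) that the intersection $\bigcap_\alpha g_\alpha L_0$ actually has nonzero span.
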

	\begin{proof}
	The group $H$ acts irreducibly on $V$ if and only if its image $\overline{H}$ in $\mathrm{GL}(V)$ is not contained in a proper parabolic subgroup of $\mathrm{GL}(V)\cong\mathrm{GL}_{n}(\mathbb{Q}_{p})$. Suppose $\overline{H}$ is contained in a proper parabolic subgroup of $\mathrm{GL}_{n}(\mathbb{Q}_{p})$. Since $H$ stabilizes a lattice, without loss of generality, we may assume that $\overline{H}$ is a subgroup of a standard proper parabolic subgroup of $\mathrm{GL}_{n}(\mathbb{Z}_{p})$ corresponding to the partition $n=n_{1}+\ldots+n_{k}$. For $m\in\mathbb{N}$, consider the lattice $L_{m}$ given by the direct sum of $n_{1}$ copies of $\frac{1}{p^{m}}\mathbb{Z}_{p}$ and $n_{2}+\ldots+n_{k}$ copies of $\mathbb{Z}_{p}$. Then $H$ stabilizes the infinite family $\lbrace[L_{m}]\rbrace_{m\in\mathbb{N}}$ of homothety classes of lattices.
	
	To prove the converse, fix a set $\{g_\alpha\}$ of coset representatives for ${\rm GL}_n(\mathbb{Q}_p)/\mathbb{Q}_p^\times{\rm GL}_n(\mathbb{Z}_p)$ such that $g_{\alpha}=(g_{ij}^\alpha)\in\mathrm{M}_{n}(\mathbb{Z}_{p})$. If $L_{0}=\mathbb{Z}_{p}\oplus\ldots\oplus\mathbb{Z}_{p}$ denotes the standard lattice in $\mathbb{Q}_{p}^{n}\cong V$, then 
	\[g_\alpha L_{0}=\bigoplus_{i=1}^n p^{\min_j\{\mathrm{val}_{p}(g_{ij}^\alpha)\}}\mathbb{Z}_p.\]
	Suppose $H$ stabilizes a family $\{g_{\alpha}L_{0}:\alpha\in\mathcal{I}\}$ of lattices where $\mathcal{I}$ is not finite. Then there exists $i$, $1\leq i\leq n$, such that \[\max_{\alpha\in \mathcal{I}}\{\min_{j}\{\mathrm{val}_{p}(g_{ij}^\alpha)\}\}\] is not bounded. Consequently, we find that $\bigcap_{\alpha\in \mathcal{I}} g_\alpha L_{0}$ is contained in a proper subspace of $V$ stable under the action of $H$, i.e., the $H$-action on $V$ is reducible.
    \end{proof}

	Let $\tau=\mathrm{Ind}^{D^{\times}}_{D(1)D_{d'}^{\times}}\theta$ be a smooth absolutely irreducible tamely ramified representation of $D^{\times}$ of dimension $d'$. 
	
	\begin{lemma}\label{I(1)-invariants of St and Sp} 
	(i) As $I/I(1)$-representations, \[\mathrm{St}(\tau)^{I(1)}\cong\mathrm{Sp}(\tau)^{I(1)}\cong(\theta\oplus\theta^{q}\oplus\ldots\oplus\theta^{q^{d'-1}})\otimes(\theta\oplus\theta^{q}\oplus\ldots\oplus\theta^{q^{d'-1}}).\] 
	(ii) The representations $\mathrm{St}(\tau)^{I(1)}$ and $\mathrm{Sp}(\tau)^{I(1)}$ are irreducible as $\mathfrak{K}_{1}$-representations if and only if $d'=1,2$.
	\end{lemma}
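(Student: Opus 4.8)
The plan is to read off both statements from the description of the $I(1)$-invariants of a tamely ramified principal series given in Section~\ref{section3}, together with the exact sequence \eqref{spstseq}. First note that the functor $(-)^{I(1)}$ is exact on smooth $E$-representations, since $I(1)$ is pro-$p$ and $\mathrm{char}(E)=0$, and that it takes values in $I/I(1)$-modules; as $I/I(1)\cong\mathbb{F}_{q^{d}}^{\times}\times\mathbb{F}_{q^{d}}^{\times}$ is a finite group, every short exact sequence of $I/I(1)$-modules splits. Applying $(-)^{I(1)}$ to \eqref{spstseq} with $\Pi:=\tau|\cdot|^{-a/2d}\times\tau|\cdot|^{a/2d}$ yields an isomorphism of $I/I(1)$-modules $\Pi^{I(1)}\cong\mathrm{Sp}(\tau)^{I(1)}\oplus\mathrm{St}(\tau)^{I(1)}$. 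Since $\Pi$ is a tamely ramified principal series whose two inducing characters are unramified twists of $\tau$, Section~\ref{section3} gives $\Pi^{I(1)}=f^{1}_{W\otimes W}\oplus f^{s}_{W\otimes W}$ with $W$ the ($d'$-dimensional) space of $\tau$, and the formulas there for the $I$-action (write $i\in I$ as an upper-triangular matrix times an element of $I(1)$, and similarly after conjugating past $s$) show that $I/I(1)$ acts on each summand through $\overline{\tau}\boxtimes\overline{\tau}$, where $\overline{\tau}:=\tau|_{\mathcal{O}_{D}^{\times}}$ is trivial on $D(1)$ and hence a representation of $\mathbb{F}_{q^{d}}^{\times}$. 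A Mackey/Clifford computation identifies $\overline{\tau}$: writing $\tau=\mathrm{Ind}_{D(1)D_{d'}^{\times}}^{D^{\times}}\theta$ and using that $\mathcal{O}_{D}^{\times}$ is normal in $D^{\times}$ with $\mathcal{O}_{D}^{\times}\backslash D^{\times}/D(1)D_{d'}^{\times}$ represented by $1,\varpi_{D},\dots,\varpi_{D}^{d'-1}$, that $D(1)\mathcal{O}_{D_{d'}}^{\times}=\mathcal{O}_{D}^{\times}$, and that conjugation by $\varpi_{D}$ induces the Frobenius $x\mapsto x^{q}$ on the residue field $\mathbb{F}_{q^{d}}$, one obtains $\overline{\tau}\cong\theta\oplus\theta^{q}\oplus\dots\oplus\theta^{q^{d'-1}}$ as a representation of $\mathbb{F}_{q^{d}}^{\times}$ (here $\theta$ stands, as in the statement, for the character $\theta\circ\mathrm{Nrd}$ of $\mathbb{F}_{q^{d}}^{\times}$). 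The hypothesis that the Galois conjugates of $\theta$ are distinct makes $\overline{\tau}$ multiplicity-free, so $\Pi^{I(1)}\cong(\overline{\tau}\boxtimes\overline{\tau})^{\oplus 2}$ is a direct sum of $(d')^{2}$ pairwise distinct characters of $I/I(1)$, each with multiplicity $2$.

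By the above, $\mathrm{Sp}(\tau)^{I(1)}$ and $\mathrm{St}(\tau)^{I(1)}$ are complementary $I/I(1)$-submodules of $(\overline{\tau}\boxtimes\overline{\tau})^{\oplus 2}$, so to prove (i) it remains to show that each character $\theta^{q^{j}}\boxtimes\theta^{q^{k}}$ occurs in each of them (necessarily then with multiplicity exactly one). I expect this to be the main obstacle. I would argue it by realizing $\mathrm{Sp}(\tau)$ as the kernel of the standard intertwining operator $N\colon\Pi\to\Pi'$, where $\Pi':=\tau|\cdot|^{a/2d}\times\tau|\cdot|^{-a/2d}$ is again a tamely ramified principal series (so $(\Pi')^{I(1)}\cong(\overline{\tau}\boxtimes\overline{\tau})^{\oplus 2}$ as $I/I(1)$-modules), with $\ker N$ the unique irreducible submodule $\mathrm{Sp}(\tau)$ of $\Pi$ --- this because $\Pi'$ is the ``reverse'' principal series, with $\mathrm{St}(\tau)$ as socle and $\mathrm{Sp}(\tau)$ as cosocle while $\Pi$ has them in the opposite order, so a nonzero $N$ cannot be an isomorphism. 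By exactness, $N^{I(1)}$ is an $I/I(1)$-equivariant map between two copies of $(\overline{\tau}\boxtimes\overline{\tau})^{\oplus 2}$ with $\ker N^{I(1)}=\mathrm{Sp}(\tau)^{I(1)}$, hence block-diagonal along the isotypic characters, and an explicit evaluation of $N^{I(1)}$ in the $f^{1}_{v},f^{s}_{v}$ coordinates --- a Gauss-sum type $2\times 2$ matrix on each block, built from the operators $\xi_{\lambda}$ of Section~\ref{section3} --- should show that each block has rank exactly one: not zero, since $N$ does not vanish on any isotypic piece, and not two, since $N$ is not an isomorphism. Then each character contributes one copy to $\mathrm{Sp}(\tau)^{I(1)}=\ker N^{I(1)}$ and one to $\mathrm{St}(\tau)^{I(1)}\cong\mathrm{im}\,N^{I(1)}$, giving (i). (Alternatively, for $d'\geq 2$ both $\mathrm{St}(\tau)$ and $\mathrm{Sp}(\tau)$ are generic, which forces the multiplicities, and for $d'=1$ the representation $\mathrm{Sp}(\tau)$ is a character, so (i) is immediate.)

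Granting (i), part (ii) is combinatorial. By (i), $\mathrm{St}(\tau)^{I(1)}$ --- and likewise $\mathrm{Sp}(\tau)^{I(1)}$ --- is a $\mathfrak{K}_{1}$-module whose restriction to $I$ is the multiplicity-free sum of the $(d')^{2}$ distinct characters $\chi_{j,k}:=\theta^{q^{j}}\boxtimes\theta^{q^{k}}$ with $0\leq j,k\leq d'-1$. Since $\mathfrak{K}_{1}=\langle I,t\rangle$ and each character occurs with multiplicity one, every $\mathfrak{K}_{1}$-submodule is the span of the $\chi_{j,k}$-lines over a subset of index pairs stable under conjugation by $t$. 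From $t=\left(\begin{smallmatrix}0&1\\\varpi_{D}&0\end{smallmatrix}\right)$ one computes $t\,\mathrm{diag}(a,d)\,t^{-1}=\mathrm{diag}(d,\varpi_{D}a\varpi_{D}^{-1})$, and, reducing modulo $I(1)$ and using once more that $\varpi_{D}$-conjugation is the Frobenius, $t$ matches the line $\chi_{j,k}$ with the line $\chi_{k,j-1}$, i.e. it acts on index pairs by $\sigma\colon(j,k)\mapsto(k,j-1)$ in $(\mathbb{Z}/d')^{2}$. Hence $\mathrm{St}(\tau)^{I(1)}$ is $\mathfrak{K}_{1}$-irreducible precisely when $\sigma$ is transitive on $(\mathbb{Z}/d')^{2}$. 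As $\sigma^{2}$ is the translation by $(-1,-1)$, the $\sigma$-orbit of $(0,0)$ is $\{(m,m):m\}\cup\{(m,m-1):m\}$, which has $2d'$ elements for $d'\geq 2$ and one element for $d'=1$; this is all of $(\mathbb{Z}/d')^{2}$ exactly when $d'\in\{1,2\}$ and is a proper subset when $d'\geq 3$. Therefore $\mathrm{St}(\tau)^{I(1)}$ and $\mathrm{Sp}(\tau)^{I(1)}$ are irreducible $\mathfrak{K}_{1}$-representations if and only if $d'=1,2$, which is (ii). The genuinely hard point in the whole argument is the rank-one statement for the blocks of $N^{I(1)}$ used in (i); everything else is formal or a direct computation in the coordinates of Section~\ref{section3}.
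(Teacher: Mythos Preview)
Your argument for (ii) is essentially the paper's, just made more explicit: the paper observes that the $\mathfrak{K}_{1}$-submodule generated by any $I/I(1)$-eigenvector has dimension at most $2d'$ (your orbit computation for $\sigma$), hence reducibility for $d'>2$, and checks the $d'=2$ case directly.

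For (i), however, your route is genuinely different and contains a real gap. The paper does \emph{not} split the principal series at all. Instead it invokes a result of Moy--Prasad \cite[Proposition 6.7]{mp96}: the natural map $\mathrm{St}(\tau)\twoheadrightarrow\mathrm{St}(\tau)_{N}$ to the Jacquet module induces a $(T\cap I)$-equivariant isomorphism $\mathrm{St}(\tau)^{I(1)}\xrightarrow{\sim}(\mathrm{St}(\tau)_{N})^{T\cap I(1)}$. Since the Jacquet module is known to be $\tau|\cdot|^{(a+d)/2d}\otimes\tau|\cdot|^{-(a+d)/2d}$ and $T\cap I(1)=D(1)\times D(1)$ acts trivially on it, the answer drops out immediately from the decomposition $\tau|_{\mathcal{O}_{D}^{\times}/D(1)}\cong\theta\oplus\theta^{q}\oplus\dots\oplus\theta^{q^{d'-1}}$. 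The same argument applies verbatim to $\mathrm{Sp}(\tau)$ using its Jacquet module. This bypasses entirely the problem you identify as ``the genuinely hard point''.

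By contrast, your approach reduces (i) to showing that the intertwining operator $N^{I(1)}$ has rank exactly one on each $2$-dimensional isotypic block, and you do not prove this; you say an explicit evaluation ``should show'' it. The justification you sketch---$N$ is neither zero nor an isomorphism---only tells you the \emph{global} rank is strictly between $0$ and $2(d')^{2}$, not that each individual block has rank one. Nor does the alternative appeal to genericity give the character-by-character multiplicity without further work. So as written, (i) is incomplete; the Moy--Prasad shortcut is what makes the paper's proof go through cleanly.
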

	\begin{proof}
	We prove the lemma for $\mathrm{St}(\tau)$; the proof for $\mathrm{Sp}(\tau)$ is similar.
	
	(i) By \cite[Proposition 6.7]{mp96}, the natural $T$-equivariant surjective map \[\mathrm{St}(\tau_{\mathbb{C}})\twoheadrightarrow\mathrm{St}(\tau_{\mathbb{C}})_{N}\] from the Steinberg representation to its smooth Jacquet module induces a $(T\cap I)$-equivariant isomorphism \[\mathrm{St}(\tau_{\mathbb{C}})^{I(1)}\rightarrow(\mathrm{St}(\tau_{\mathbb{C}})_{N})^{T\cap I(1)}.\] By \cite[Theorem 2.2 (ii)]{rag07}, $\mathrm{St}(\tau_{\mathbb{C}})_{N}\cong\tau_{\mathbb{C}}|\cdot|_{\mathbb{C}}^{\frac{a+d}{2d}}\otimes\tau_{\mathbb{C}}|\cdot|_{\mathbb{C}}^{-\frac{a+d}{2d}}$ as $T$-representations. So the group $T\cap I(1)=D(1)\times D(1)$ acts trivially on $\mathrm{St}(\tau_{\mathbb{C}})_{N}$ because $\tau_{\mathbb{C}}$ is tamely ramified, i.e., \[(\mathrm{St}(\tau_{\mathbb{C}})_{N})^{T\cap I(1)}=\mathrm{St}(\tau_{\mathbb{C}})_{N}.\] Since all the representations are defined over $E$, it follows that \[\mathrm{St}(\tau)^{I(1)}\cong\tau|\cdot|^{\frac{a+d}{2d}}\otimes\tau|\cdot|^{-\frac{a+d}{2d}}\] as $E$-linear $I/I(1)$-representations. Now, (i) follows from the isomorphisms \[\tau|\cdot|^{\frac{a+d}{2d}}\cong\tau|\cdot|^{-\frac{a+d}{2d}}\cong\theta\oplus\theta^{q}\oplus\ldots\oplus\theta^{q^{d'-1}}\] as $\mathcal{O}_{D}^{\times}/D(1)$-representations.
	
	(ii) The $I/I(1)$-representation $\mathrm{St}(\tau)^{I(1)}$ is a sum of $d'^{2}$ distinct $I/I(1)$-characters because the Galois conjugates of $\theta$ are distinct. Recall that $\mathfrak{K}_{1}$ is generated by $I$ and $t$. So the $\mathfrak{K}_{1}$-subrepresentation of $\mathrm{St}(\tau)^{I(1)}$ generated by a non-zero $I/I(1)$-eigenvector has dimension at most $2d'$. Hence $\mathrm{St}(\tau)^{I(1)}$ is reducible if $d'>2$. Conversely, if $d'=2$, it is easy to check that any of the four $I/I(1)$-characters in $\mathrm{St}(\tau)^{I(1)}$ generate the whole representation under the action of $t$.
	\end{proof}
	
	By Frobenius reciprocity, there are non-zero maps of $K$-representations  \[\mathrm{Ind}_{I}^{K}(\theta^{q^{i}}\otimes\theta^{q^{j}})\rightarrow\mathrm{St}(\tau)^{K(1)}\hspace{2mm}\text{and}\hspace{2mm}\mathrm{Ind}_{I}^{K}(\theta^{q^{i}}\otimes\theta^{q^{j}})\rightarrow\mathrm{Sp}(\tau)^{K(1)}\]  for all $0\leq i,j\leq d'-1$. The representations $\mathrm{Ind}_{I}^{K}(\theta^{q^{i}}\otimes\theta^{q^{j}})$ are irreducible if $i\neq j$. Thus, \begin{equation}\label{K-irr in St and Sp}
	    \mathrm{Ind}_{I}^{K}(\theta^{q^{i}}\otimes\theta^{q^{j}})\subseteq\mathrm{St}(\tau)^{K(1)}\hspace{2mm}\text{and}\hspace{2mm}\mathrm{Ind}_{I}^{K}(\theta^{q^{i}}\otimes\theta^{q^{j}})\subseteq\mathrm{Sp}(\tau)^{K(1)}
	\end{equation}  for all $0\leq i< j\leq d'-1$. If $i=j$, then $\mathrm{Ind}_{I}^{K}(\theta^{q^{i}}\otimes\theta^{q^{i}})$ is a sum of 2 irreducible subrepresentations  $\theta^{q^{i}}\circ\mathrm{det}(\overline{\hspace{0.5mm}\cdot\hspace{0.5mm}})$ and $\mathrm{st}(\theta^{q^{i}})$. Here, $\mathrm{det}(\overline{\hspace{0.5mm}\cdot\hspace{0.5mm}})$ is the composition of the determinant character of $\mathrm{GL}_{2}(\mathbb{F}_{q^{d}})$ and the natural surjection $K\twoheadrightarrow\mathrm{GL}_{2}(\mathbb{F}_{q^{d}})$.
	
	
	\begin{lemma}\label{K(1)-invariants of St and Sp}
	As $K$-representations \begin{align*}
	&\mathrm{St}(\tau)^{K(1)}\cong\bigoplus_{i< j}\mathrm{Ind}_{I}^{K}(\theta^{q^{i}}\otimes\theta^{q^{j}})\oplus\bigoplus_{i}\mathrm{st}(\theta^{q^{i}})\hspace{2mm}\text{and}\\&\mathrm{Sp}(\tau)^{K(1)}\cong\bigoplus_{i< j}\mathrm{Ind}_{I}^{K}(\theta^{q^{i}}\otimes\theta^{q^{j}})\oplus\bigoplus_{i}(\theta^{q^{i}}\circ\mathrm{det}(\overline{\hspace{0.5mm}\cdot\hspace{0.5mm}})).
	\end{align*}
	\end{lemma}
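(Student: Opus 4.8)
The plan is to take $K(1)$-invariants in the short exact sequence \eqref{spstseq} and then track how the $\mathrm{GL}_{2}(\mathbb{F}_{q^{d}})$-constituents distribute between the sub and the quotient. Since $K(1)$ is pro-$p$ and we work over the characteristic-zero field $E$, the functor $(-)^{K(1)}$ is exact; writing $\Pi=\tau|\cdot|^{-\frac{a}{2d}}\times\tau|\cdot|^{\frac{a}{2d}}$, sequence \eqref{spstseq} yields a short exact sequence of $K/K(1)\cong\mathrm{GL}_{2}(\mathbb{F}_{q^{d}})$-representations
\[
0\longrightarrow\mathrm{Sp}(\tau)^{K(1)}\longrightarrow\Pi^{K(1)}\longrightarrow\mathrm{St}(\tau)^{K(1)}\longrightarrow 0 .
\]
The middle term is computed by \S\ref{section3}: viewing $\Pi$ as the unnormalized induction $\mathrm{Ind}_{B}^{G}(\tau|\cdot|^{\frac{d-a}{2d}}\otimes\tau|\cdot|^{-\frac{d-a}{2d}})$ of two tamely ramified representations, \S\ref{section3} identifies $\Pi^{K(1)}$ with $\mathrm{Ind}_{I\varpi_{D}^{\mathbb{Z}}}^{\mathfrak{K}_{0}}(\tau|\cdot|^{\frac{d-a}{2d}}\otimes\tau|\cdot|^{-\frac{d-a}{2d}})$ as a $\mathfrak{K}_{0}$-module; restricting to $K$ by Mackey (using $\mathfrak{K}_{0}=K\cdot I\varpi_{D}^{\mathbb{Z}}$ with $K\cap I\varpi_{D}^{\mathbb{Z}}=I$) and using that $|\cdot|$ is trivial on $\mathcal{O}_{D}^{\times}$ while $\tau|_{\mathcal{O}_{D}^{\times}}\cong\bigoplus_{k}\theta^{q^{k}}$, one gets $\Pi^{K(1)}\cong\bigoplus_{i,j}\mathrm{Ind}_{I}^{K}(\theta^{q^{i}}\otimes\theta^{q^{j}})$.

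I would then decompose and do the bookkeeping. For $i\neq j$ the module $\mathrm{Ind}_{I}^{K}(\theta^{q^{i}}\otimes\theta^{q^{j}})$ is irreducible and isomorphic to $\mathrm{Ind}_{I}^{K}(\theta^{q^{j}}\otimes\theta^{q^{i}})$, so each such irreducible occurs in $\Pi^{K(1)}$ with multiplicity exactly two; for $i=j$ one has $\mathrm{Ind}_{I}^{K}(\theta^{q^{i}}\otimes\theta^{q^{i}})\cong(\theta^{q^{i}}\circ\mathrm{det})\oplus\mathrm{st}(\theta^{q^{i}})$. By \eqref{K-irr in St and Sp} each of $\mathrm{St}(\tau)^{K(1)}$ and $\mathrm{Sp}(\tau)^{K(1)}$ contains $\mathrm{Ind}_{I}^{K}(\theta^{q^{i}}\otimes\theta^{q^{j}})$ for $i<j$; its total multiplicity in $\Pi^{K(1)}$ being two, it occurs exactly once in each. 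Moreover $\Pi^{K(1)}$, being an $\mathrm{Ind}_{I}^{K}$ of a sum of characters, has no cuspidal constituent, hence neither does $\mathrm{St}(\tau)^{K(1)}$ or $\mathrm{Sp}(\tau)^{K(1)}$; and by Lemma \ref{I(1)-invariants of St and Sp}(i) the character $\theta^{q^{i}}\otimes\theta^{q^{i}}$ occurs with multiplicity one in $\mathrm{St}(\tau)^{I(1)}=(\mathrm{St}(\tau)^{K(1)})^{I(1)/K(1)}$, so exactly one of $\theta^{q^{i}}\circ\mathrm{det}$ and $\mathrm{st}(\theta^{q^{i}})$ lies in $\mathrm{St}(\tau)^{K(1)}$ and the other in $\mathrm{Sp}(\tau)^{K(1)}$. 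Thus the lemma reduces to the single claim that, for every $i$, it is $\mathrm{st}(\theta^{q^{i}})$ that lies in $\mathrm{St}(\tau)^{K(1)}$.

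This last claim is the main obstacle, since $\mathrm{St}(\tau)$ and $\mathrm{Sp}(\tau)$ have the same $I(1)$-invariants as $I/I(1)$-modules and their smooth Jacquet modules agree after restriction to $T\cap K$, so none of the invariants used so far separates them. To settle it I would bring in the standard (normalized) intertwining operator from $\tau|\cdot|^{\frac{a}{2d}}\times\tau|\cdot|^{-\frac{a}{2d}}$ to $\Pi$, whose image is the submodule $\mathrm{Sp}(\tau)$: restricting this operator to the one-dimensional $(\theta^{q^{i}}\circ\mathrm{det})$-line inside $K(1)$-invariants and showing the restriction is nonzero forces $\theta^{q^{i}}\circ\mathrm{det}\subseteq\mathrm{Sp}(\tau)^{K(1)}$, whence $\mathrm{st}(\theta^{q^{i}})\subseteq\mathrm{St}(\tau)^{K(1)}$; since conjugation by $\varpi_{D}\in\mathfrak{K}_{0}$ permutes the constituents cyclically, it suffices to do this for one $i$. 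The sign in the computation is pinned down by the baseline case $d'=1$, where $\tau$ is a character of $D^{\times}$, $\mathrm{Sp}(\tau)$ is the one-dimensional representation of $G$ occurring in \eqref{spstseq} and has $K(1)$-invariants $\theta\circ\mathrm{det}$ (the character $\theta$ in the sense of Lemma \ref{I(1)-invariants of St and Sp}), while $\mathrm{St}(\tau)$ is the twisted Steinberg with $\mathrm{St}(\tau)^{K(1)}=\mathrm{st}(\theta)$. Alternatively one can invoke the depth-zero type formalism used via \cite{mp96} in the proof of Lemma \ref{I(1)-invariants of St and Sp}, under which the trivial and Steinberg types of $\mathrm{GL}_{2}(\mathbb{F}_{q^{d}})$ attach respectively to the ``trivial-end'' $\mathrm{Sp}(\tau)$ and the ``Steinberg-end'' $\mathrm{St}(\tau)$ of the reducible principal series. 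Feeding this back into the bookkeeping gives the two displayed decompositions.
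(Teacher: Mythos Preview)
Your reduction is exactly the one the paper makes: compute the constituents of $\Pi^{K(1)}$, use multiplicity-freeness of the $I(1)$-invariants (Lemma \ref{I(1)-invariants of St and Sp}) together with \eqref{K-irr in St and Sp} to see that the only undetermined bit is which of $\theta^{q^{i}}\circ\det(\overline{\,\cdot\,})$ and $\mathrm{st}(\theta^{q^{i}})$ lands in $\mathrm{Sp}(\tau)^{K(1)}$, and reduce to a single $i$ via the $\varpi_{D}$-action. The paper then settles that single claim by a direct citation to \cite[Proposition~7.21(1)]{ms14}, which is precisely the depth-zero type statement you allude to in your second alternative.

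Your first alternative (restricting the standard intertwining operator to the $(\theta^{q^{i}}\circ\det)$-line and checking nonvanishing) is a legitimate route, but as written it is not complete: the ``sign pinned down by $d'=1$'' remark is only a consistency check, not a proof for general $d'$. To make it go through you would have to actually compute the intertwiner on that spherical line (e.g.\ via a Gindikin--Karpelevich-type integral over $N$, or by identifying the relevant Hecke eigenvalue) and verify it is nonzero; this is doable but is exactly the work that the citation to \cite{ms14} absorbs. So your proposal and the paper's proof agree in structure, and your type-theoretic option coincides with what the paper invokes; the intertwining-operator option is a genuine alternative that would require one more explicit computation.
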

	\begin{proof}

	It suffices to show that $\theta\circ\mathrm{det}(\overline{\hspace{0.5mm}\cdot\hspace{0.5mm}})\subseteq\mathrm{Sp}(\tau)^{K(1)}$. Indeed, if $\theta\circ\mathrm{det}(\overline{\hspace{0.5mm}\cdot\hspace{0.5mm}})\subseteq\mathrm{Sp}(\tau)^{K(1)}$, then the diagonal action of $\varpi_{D}$ gives $\theta^{q^{i}}\circ\mathrm{det}(\overline{\hspace{0.5mm}\cdot\hspace{0.5mm}})\subseteq\mathrm{Sp}(\tau)^{K(1)}$ for all $i$, and the multiplicity-freeness of $\mathrm{St}(\tau)^{I(1)}$ and $\mathrm{Sp}(\tau)^{I(1)}$ then implies that $\mathrm{st}(\theta^{q^{i}})\subseteq\mathrm{St}(\tau)^{K(1)}$ for all $i$. We may use \cite[Proposition 7.21(1)]{ms14} to conclude that $\theta\circ\mathrm{det}(\overline{\hspace{0.5mm}\cdot\hspace{0.5mm}})\subseteq\mathrm{Sp}(\tau)^{K(1)}$. 
\end{proof}
	
	\begin{theorem}\label{main_thm__for_la_st_and_sp}
	Let $\tau$ be a smooth absolutely irreducible tamely ramified representation of $D^{\times}$ of dimension $d'\leq 2$. Let $\pi=\mathrm{St}(\tau)\otimes\pi_{alg}$ be an irreducible locally algebraic representation with integral central character. Then $\pi$ admits an integral structure.
	\end{theorem}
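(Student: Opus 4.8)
The plan is to apply Corollary \ref{zigzag_criterion}: it suffices to produce a $\mathfrak{K}_{1}$-lattice $L_{1}$ in $V_{1}=\mathrm{St}(\tau)^{I(1)}\otimes\pi_{alg}$ whose zig-zag sequence $(L_{1}^{(i)})_{i}$ becomes stationary, for then $H_{0}\bigl(L_{1}^{(i_{0})}\hookrightarrow\mathfrak{K}_{0}\cdot L_{1}^{(i_{0})}\bigr)$ is a $G$-integral structure on $\pi$. One should note at the outset that the naive reduction fails: twisting $\mathrm{St}(\tau)$ so that its own central character becomes integral destroys the integrality of $\omega_{\pi_{alg}}$, and in general neither factor of $\pi=\mathrm{St}(\tau)\otimes\pi_{alg}$ is integral on its own, so one genuinely has to work on the tree. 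The first real input is that $V_{1}$ is \emph{irreducible} as a $\mathfrak{K}_{1}$-representation. Indeed, $d'\le 2$ forces $\mathrm{St}(\tau)^{I(1)}$ to be $\mathfrak{K}_{1}$-irreducible by Lemma \ref{I(1)-invariants of St and Sp}(ii), and it is trivial on $I(1)$; since $I(1)$ is Zariski dense in $G$, the restriction $\pi_{alg}|_{I(1)}$ is absolutely irreducible, so $V_{1}|_{I(1)}$ is $\pi_{alg}|_{I(1)}$-isotypic, and as $\pi_{alg}|_{I(1)}$ extends to $\mathfrak{K}_{1}$, the functor $W\mapsto\mathrm{Hom}_{I(1)}(\pi_{alg},W)$ identifies the $\mathfrak{K}_{1}$-subrepresentations of $V_{1}$ with those of $\mathrm{St}(\tau)^{I(1)}$; hence $V_{1}$ is $\mathfrak{K}_{1}$-irreducible.

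Since $\varpi_{F}I_{2}$ acts invertibly on any $G$-stable lattice, the integrality hypothesis means $\omega_{\pi}(\varpi_{F})\in\mathcal{O}^{\times}$. Now $t^{2d}=\varpi_{F}v\,I_{2}$ for some $v\in\mathcal{O}_{D}^{\times}$, and this element acts on $\pi$ as $\omega_{\pi}(\varpi_{F})$ times the action of the element $vI_{2}\in I$; consequently $t^{2d}$ preserves every $I$-stable lattice in $V_{1}$, and (as $t$ normalizes $I$) a suitable finite sum of $t$-translates of an $I$-stable lattice is $\mathfrak{K}_{1}$-stable. Thus $\mathfrak{K}_{1}$ stabilizes a lattice in $V_{1}$, and by Lemma \ref{finite_homothey_classes_lemma} (together with the irreducibility just established) it stabilizes only \emph{finitely many} homothety classes of lattices in $V_{1}$. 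The zig-zag operation $L\mapsto(\mathfrak{K}_{0}\cdot L\cap V_{1})+t(\mathfrak{K}_{0}\cdot L\cap V_{1})$ commutes with scaling by $\varpi$; combined with the finiteness of homothety classes this shows that, after replacing $L_{1}$ by some $L_{1}^{(N)}$, there are integers $\rho\ge 1$ and $e\ge 0$ with $L_{1}^{(i+\rho)}=\varpi^{-e}L_{1}^{(i)}$ for all $i\ge 0$. Because the sequence is increasing, $e=0$ is equivalent to $(L_{1}^{(i)})$ being stationary, i.e.\ to $\mathfrak{K}_{0}\cdot L_{1}^{(i_{0})}\cap V_{1}=L_{1}^{(i_{0})}$ for the period lattice; so the whole proof reduces to ruling out $e>0$.

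Suppose $e>0$. Then the homothety-periodicity propagates to the $\mathfrak{K}_{0}$-lattices $L_{0}^{(i)}=\mathfrak{K}_{0}\cdot L_{1}^{(i)}$ and hence to the mod-$\varpi$ diagrams $\overline{\mathcal{D}^{(i)}}:=\bigl(L_{1}^{(i)}/\varpi\to L_{0}^{(i)}/\varpi\bigr)$ of finite-dimensional $k$-vector spaces, which satisfy $\overline{\mathcal{D}^{(i+\rho)}}\cong\overline{\mathcal{D}^{(i)}}$, so $H_{0}(\overline{\mathcal{D}^{(i+\rho)}})\cong H_{0}(\overline{\mathcal{D}^{(i)}})$; moreover, reducing the $G$-equivariant surjections of Lemma \ref{surjectivity_on_h0} modulo $\varpi$ yields surjections $H_{0}(\overline{\mathcal{D}^{(i)}})\twoheadrightarrow H_{0}(\overline{\mathcal{D}^{(i+1)}})$. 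The main obstacle is to turn this periodic tower of surjections into a contradiction. The idea, generalizing Hu's argument, is to pass to the ``difference'' diagram $\mathcal{E}:=\bigl(L_{1}^{(i_{0}+\rho)}/L_{1}^{(i_{0})}\to L_{0}^{(i_{0}+\rho)}/L_{0}^{(i_{0})}\bigr)$, again of finite-dimensional $k$-vector spaces (annihilated by $\varpi^{e}$): one isolates inside it a graded subquotient of the shape considered in Lemma \ref{h0diagrams}, with irreducible $\mathfrak{K}_{1}$-part and vanishing $0$-th homology, applies the dimension inequality of that lemma together with Remark \ref{rmkh0diagrams} — here one also uses the $I/I(1)$-decomposition of $\mathrm{St}(\tau)^{I(1)}$ from Lemma \ref{I(1)-invariants of St and Sp}(i) and the $K$-decomposition of $\mathrm{St}(\tau)^{K(1)}$ from Lemma \ref{K(1)-invariants of St and Sp} to compare $\dim_{E}V_{0}$ with $\dim_{E}V_{1}$ — and then feeds the resulting bound back through the periodicity to see that it is incompatible with the tower of surjections above unless the difference diagrams are eventually zero, i.e.\ $e=0$. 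A subtle point is that the integral homology modules $H_{0}(L_{1}^{(i)}\hookrightarrow L_{0}^{(i)})$ may carry $\varpi$-torsion: were one of them torsion-free it would already be the sought $G$-integral structure in $\pi$, so the delicate case is that torsion persists, and it is exactly against this that the $\mathfrak{K}_{1}$-irreducibility of $V_{1}$ — equivalently the hypothesis $d'\le 2$, shown to be sharp in Lemma \ref{I(1)-invariants of St and Sp}(ii) — is used. Once $e=0$ is proved, $L_{1}^{(i_{0})}=L_{1}^{(i_{0}+1)}$ for $i_{0}\gg 0$, and Corollary \ref{zigzag_criterion} yields the integral structure.
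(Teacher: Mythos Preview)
Your overall strategy matches the paper's: show $V_{1}$ is $\mathfrak{K}_{1}$-irreducible (via Lemma \ref{I(1)-invariants of St and Sp}(ii)), produce a $\mathfrak{K}_{1}$-lattice, invoke Lemma \ref{finite_homothey_classes_lemma} to get finitely many homothety classes, and then argue by contradiction that the zig-zag cannot grow forever. The setup through the periodicity $L_{1}^{(i_{0}+\rho)}=\varpi^{-e}L_{1}^{(i_{0})}$ is fine; the paper does the same (with $\rho$ playing the role of $i_{0}$ and $-e$ the role of $j$).

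The final paragraph, however, is muddled and misses the clean mechanism that actually produces the contradiction. Two concrete points:

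\emph{First}, you never justify why your difference diagram $\mathcal{E}$ (or any graded piece of it) has vanishing $H_{0}$. This does \emph{not} come from reducing the surjections of Lemma \ref{surjectivity_on_h0} mod $\varpi$ and then using periodicity of the $H_{0}(\overline{\mathcal{D}^{(i)}})$: those are infinite-dimensional $kG$-modules, so a surjective periodic tower tells you nothing. The correct argument is much simpler and stays at the integral level: Lemma \ref{surjectivity_on_h0} gives a surjection $H_{0}(\mathcal{D}_{\mathcal{O}})\twoheadrightarrow H_{0}(\varpi^{-e}\mathcal{D}_{\mathcal{O}})$ induced by the \emph{inclusion} of diagrams, so by right-exactness of $H_{0}$ one gets $H_{0}(\varpi^{-e}\mathcal{D}_{\mathcal{O}}/\mathcal{D}_{\mathcal{O}})=0$, i.e.\ $H_{0}(\mathcal{E})=0$; then d\'evissage (again right-exactness) gives $H_{0}(\mathcal{D}_{k})=0$ for the single mod-$\varpi$ layer $\mathcal{D}_{k}=\mathcal{D}_{\mathcal{O}}\otimes_{\mathcal{O}}k$.

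\emph{Second}, once $H_{0}(\mathcal{D}_{k})=0$, the contradiction is immediate and purely numerical --- there is no ``feeding the bound back through the periodicity'' and no role for the tower of surjections. Lemma \ref{h0diagrams} (with Remark \ref{rmkh0diagrams}) yields $\dim_{E}V_{0}\le\frac{q^{d}+1}{2}\dim_{E}V_{1}$, whereas Lemmas \ref{I(1)-invariants of St and Sp}(i) and \ref{K(1)-invariants of St and Sp} give $\dim_{E}\mathrm{St}(\tau)^{I(1)}=d'^{2}$ and $\dim_{E}\mathrm{St}(\tau)^{K(1)}=\tfrac{1}{2}(d'^{2}-d')(q^{d}+1)+d'q^{d}$; for $d'\in\{1,2\}$ one checks directly that this forces $\dim_{E}V_{0}>\frac{q^{d}+1}{2}\dim_{E}V_{1}$, a contradiction. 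Your remarks about $\varpi$-torsion in the integral $H_{0}$ are a red herring and can be deleted.
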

	\begin{proof}
	Let $V_{1}=\mathrm{St}(\tau)^{I(1)}\otimes\pi_{alg}$ and $V_{0}=\mathrm{St}(\tau)^{K(1)}\otimes\pi_{alg}$. The group $\mathfrak{K}_{1}$ is isomorphic to $I\rtimes t^{\mathbb{Z}}$. Since $I$ is compact, $t^{2d}\in Z$, and $Z$ acts on $V_{1}$ by an integral character, it follows that $V_{1}$ contains a $\mathfrak{K}_{1}$-lattice $L_{1}$. Moreover, as $\mathrm{St}(\tau)^{I(1)}$ is an irreducible $\mathfrak{K}_{1}$-representation by Lemma \ref{I(1)-invariants of St and Sp} (ii) and $\pi_{alg}$ is also an irreducible $\mathfrak{K}_{1}$-representation, $V_{1}$ is an irreducible locally algebraic representation of $\mathfrak{K}_{1}$. Thus $V_{1}$ contains finitely many homothety classes of $\mathfrak{K}_{1}$-lattices by Lemma \ref{finite_homothey_classes_lemma}. 
	
	Suppose $\pi$ is not integral. Then, by Corollary \ref{zigzag_criterion}, the increasing sequence of $\mathfrak{K}_{1}$-lattices $(L_{1}^{(i)})_{i}$ of $V_{1}$ does not become stationary. By the previous paragraph, there is $i_{0}>0$ such that $L_{1}^{(i_{0})}$ and $L_{1}$ are in the same homothety class, i.e., $L_{1}^{(i_{0})}=\varpi^{j}L_{1}$ for some $j<0$. Let $L_{0}=\mathfrak{K}_{0}\cdot L_{1}$ and $L_{0}^{(i_{0})}=\mathfrak{K}_{0}\cdot L_{1}^{(i_{0})}=\varpi^{j}L_{0}$. Let \[\mathcal{D}_{\mathcal{O}}=L_{1}\hookrightarrow L_{0}\hspace{3mm}\text{and}\hspace{3mm}\mathcal{D}_{\mathcal{O}}^{(i_{0})}=L_{1}^{(i_{0})}\hookrightarrow L_{0}^{(i_{0})}\] be the corresponding diagrams of $\mathcal{O}$-free $\mathcal{O}\mathfrak{K}_{i}$-modules. The natural map $H_{0}(\mathcal{D}_{\mathcal{O}})\rightarrow H_{0}(\mathcal{D}_{\mathcal{O}}^{(i_{0})})$ is surjective by Lemma \ref{surjectivity_on_h0}. As the diagram $\mathcal{D}_{\mathcal{O}}^{(i_{0})}$ is equal to the diagram $\varpi^{j}\mathcal{D}_{\mathcal{O}}$, this gives $H_{0}(\varpi^{j}\mathcal{D}_{\mathcal{O}}/\mathcal{D}_{\mathcal{O}})=0$. By d\'{e}vissage, we have $H_{0}(\mathcal{D}_{k})=0$ where  $\mathcal{D}_{k}=\mathcal{D}_{\mathcal{O}}\otimes_{\mathcal{O}}k=\varpi^{-1}\mathcal{D}_{\mathcal{O}}/\mathcal{D}_{\mathcal{O}}$. By Lemma \ref{h0diagrams} and Remark \ref{rmkh0diagrams}, we get that $\mathrm{dim}_{k}(L_{0}\otimes_{\mathcal{O}} k)\leq\frac{q^{d}+1}2\mathrm{dim}_{k}(L_{1}\otimes_{\mathcal{O}} k)$. Since $\mathrm{dim}_{k}(L_{0}\otimes_{\mathcal{O}} k)=\mathrm{dim}_{E}V_{0}$ and $\mathrm{dim}_{k}(L_{1}\otimes_{\mathcal{O}} k)=\mathrm{dim}_{E}V_{1}$, we obtain $\mathrm{dim}_{E}V_{0}\leq\frac{q^{d}+1}2\mathrm{dim}_{E}V_{1}$. However, it follows from Lemma \ref{I(1)-invariants of St and Sp} (i) that \[\mathrm{dim}_{E}\mathrm{St}(\tau)^{I(1)}=d'^{2},\] and from Lemma \ref{K(1)-invariants of St and Sp} that \begin{align*}
	    &\mathrm{dim}_{E}\mathrm{St}(\tau)^{K(1)}=\frac{1}{2}(d'^{2}-d')(q^{d}+1)+d'q^{d}.
	\end{align*}
	This implies that $\mathrm{dim}_{E}V_{0}>\frac{q^{d}+1}{2}\mathrm{dim}_{E}V_{1}$. A contradiction. 
	\end{proof}

	We conclude with an example of an integral locally algebraic Speh representation. For simplicity, we take $D$ to be the quaternionic division algebra. Let $\tau=\mathrm{Ind}^{D^{\times}}_{D(1)D_{2}^{\times}}\theta$ be a smooth absolutely irreducible tamely ramified representation of $D^{\times}$ over $E$. Note that $\tau$ has dimension $2$ and hence $\mathrm{Sp}(\tau)$ is infinite-dimensional. Consider the following irreducible locally algebraic representation \[\pi:=\mathrm{Sp}(\tau)\otimes (\mathrm{Sym}^{1}E^{4}\otimes\mathrm{det}^{-\frac{1}{4}})\] 
	of $G$. Here, $G$ acts on the algebraic representation via $G\hookrightarrow\mathrm{GL}_{4}(F_{2})\hookrightarrow\mathrm{GL}_{4}(E)$ induced by the map $D\rightarrow\mathrm{M}_{2}(F_2)$, $\alpha+\beta\varpi_{D}\mapsto\left(\begin{smallmatrix} \alpha & \beta\varpi_{F} \\ \sigma(\beta) & \sigma(\alpha) \end{smallmatrix}\right)$ where $\sigma$ is the unique non-trivial automorphism in $\mathrm{Gal}(F_2/F)$. We assume that $\omega_{\tau}$ is integral so that the  central character $\omega_{\pi}$ of $\pi$ is integral. We now show that $\pi$ \emph{is} integral which, in particular, implies that Emerton's conditions are sufficient for the integrality of $\pi$. We take $s=\left(\begin{smallmatrix}0 & 1\\-1 & 0\end{smallmatrix}\right)$ so that $\mathrm{det}(\overline{s})=1$.
	
	\begin{theorem}\label{la_sp_int}
	The representation $\pi=\mathrm{Sp}(\tau)\otimes (\mathrm{Sym}^{1}E^{4}\otimes\mathrm{det}^{-\frac{1}{4}})$ admits an integral structure.
	\end{theorem}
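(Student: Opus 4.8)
The plan is to apply the zig-zag criterion of Corollary~\ref{zigzag_criterion}. Write $V_{1}=\mathrm{Sp}(\tau)^{I(1)}\otimes\pi_{alg}$ and $V_{0}=\mathrm{Sp}(\tau)^{K(1)}\otimes\pi_{alg}$ for the spaces of the diagram $\mathcal{D}(\pi)$; it suffices to exhibit one $\mathfrak{K}_{1}$-lattice $L_{1}\subseteq V_{1}$ for which the increasing sequence $(L_{1}^{(i)})_{i}$ of $\mathfrak{K}_{1}$-lattices becomes stationary, and then $H_{0}(L_{1}^{(i_{0})}\hookrightarrow\mathfrak{K}_{0}\cdot L_{1}^{(i_{0})})$ is a $G$-integral structure of $\pi$. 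The first task is to make $V_{0}$ and $V_{1}$ explicit. Realizing $\mathrm{Sp}(\tau)$ as the irreducible submodule of $\Pi=\tau|\cdot|^{-\frac14}\times\tau|\cdot|^{\frac14}$ via the exact sequence \eqref{spstseq} (here $d=d'=2$, $a=1$), one transports the function model of \S\ref{section3}: the vectors $f^{h}_{v}\in\Pi^{I(1)}$, $g^{h}_{v}\in\Pi^{K(1)}$ and the formulas for the $G$-action. By Lemma~\ref{I(1)-invariants of St and Sp} (i), $\mathrm{Sp}(\tau)^{I(1)}$ is $4$-dimensional over $E$, and by Lemma~\ref{K(1)-invariants of St and Sp}, $\mathrm{Sp}(\tau)^{K(1)}\cong\mathrm{Ind}_{I}^{K}(\theta\otimes\theta^{q})\oplus(\theta\circ\mathrm{det}(\overline{\,\cdot\,}))\oplus(\theta^{q}\circ\mathrm{det}(\overline{\,\cdot\,}))$; tensoring with $\pi_{alg}=\mathrm{Sym}^{1}E^{4}\otimes\mathrm{det}^{-\frac14}$ then pins down $V_{1}$ and $V_{0}$ as $\mathfrak{K}_{1}$- and $\mathfrak{K}_{0}$-modules. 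Note that the crude dimension estimate that settled the Steinberg case in Theorem~\ref{main_thm__for_la_st_and_sp} is unavailable here, since $\mathrm{dim}_{E}V_{0}<\frac{q^{d}+1}{2}\mathrm{dim}_{E}V_{1}$; a genuine construction of the lattice is forced.

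Next I would fix the initial lattice $L_{1}^{(0)}$. Since $\omega_{\pi}=\omega_{\tau}^{2}$ is integral, $t^{4}=\varpi_{F}I_{2}\in Z$ acts on $V_{1}$ by the integral unit $\omega_{\tau}^{2}(\varpi_{F})$, so $V_{1}$ admits a $\mathfrak{K}_{1}$-lattice; I would build it from the standard algebraic lattice $M_{0}=\mathrm{Sym}^{1}\mathcal{O}^{4}\subseteq\mathrm{Sym}^{1}E^{4}$ (with the $\mathrm{det}^{-\frac14}$-twisted action, which requires $\sqrt{\varpi_{F}}\in E$) together with a $T_{0}$-stable lattice in $\mathrm{Sp}(\tau)^{I(1)}$. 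The fact that one may use the \emph{standard} algebraic lattice, rather than an unbalanced one, is the lattice-theoretic shadow of the redundancy of the second part of Emerton's condition \eqref{emerton_condition_st_sp} for $\mathrm{Sp}(\tau)$ noted in the introduction.

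The heart of the proof is the explicit computation of the zig-zag, carried out using the matrices
\[
st=\left(\begin{smallmatrix}0&\varpi_{F}&0&0\\-1&0&0&0\\0&0&1&0\\0&0&0&1\end{smallmatrix}\right),\qquad
t^{2}=\left(\begin{smallmatrix}0&\varpi_{F}&0&0\\-1&0&0&0\\0&0&0&\varpi_{F}\\0&0&-1&0\end{smallmatrix}\right),\qquad
v_{x}t=\left(\begin{smallmatrix}0&0&1&0\\0&0&0&1\\0&\varpi_{F}&x&0\\-1&0&0&x^{q}\end{smallmatrix}\right)\quad(x\in\mathbb{F}_{q^{2}}^{\times})
\]
acting on $\mathrm{Sym}^{1}E^{4}$, together with the action of representatives of $\mathfrak{K}_{0}/I\varpi_{D}^{\mathbb{Z}}$ and of $K/I(1)$ on the functions $g^{h}_{v}$. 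One computes $\mathfrak{K}_{0}\cdot L_{1}^{(0)}$, extracts $L_{1}^{(1)}=\bigl(\mathfrak{K}_{0}\cdot L_{1}^{(0)}\cap V_{1}\bigr)+t\bigl(\mathfrak{K}_{0}\cdot L_{1}^{(0)}\cap V_{1}\bigr)$, iterates once more to obtain $L_{1}^{(2)}$, and shows $L_{1}^{(2)}=L_{1}^{(3)}$; the essential containment to be verified has the shape $\bigl((M_{0}+st\,M_{0})+t^{2}(M_{0}+st\,M_{0})\bigr)\cap\bigl((M_{0}+v_{x}t\,M_{0})+t^{2}(M_{0}+v_{x}t\,M_{0})\bigr)\subseteq M_{0}$, suitably twisted by the smooth data. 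As in the proof of Theorem~\ref{main_thm_for_prin_ser}, the sums over $\mathbb{F}_{q^{2}}$ that arise from the $\sum_{x}F^{x}_{v}$-type identities are handled by the Fourier transform on $\mathbb{F}_{q^{2}}$, using $\widehat{\widehat{f}}=q^{d}f$, $\widehat{f\ast g}=\widehat{f}\,\widehat{g}$, and the distinctness of $\theta$ and $\theta^{q}$ on $\mathcal{O}_{D}^{\times}$.

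I expect this last computation to be the main obstacle. Unlike the smooth principal series of \S\ref{section3}, where a single iteration of the zig-zag already stabilizes, the nontrivial algebraic factor $\mathrm{Sym}^{1}E^{4}$ produces a genuine enlargement at the first step---the $\varpi_{F}$-entries on the diagonals of $st$, $t^{2}$ and $v_{x}t$ introduce new denominators---so the bookkeeping of $\varpi_{F}$-valuations, tracking the interplay between the algebraic lattice $M_{0}$, the smooth lattice, and the Fourier coefficients, must be pushed through a \emph{second} iteration, after which the denominators exactly cancel and no further growth occurs. Showing this cancellation, rather than any conceptual point, is where the work lies; Corollary~\ref{zigzag_criterion} (together with Lemma~\ref{surjectivity_on_h0}) then yields the integral structure.
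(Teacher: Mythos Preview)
Your overall strategy---apply the zig-zag criterion of Corollary~\ref{zigzag_criterion} to an explicit $\mathfrak{K}_{1}$-lattice in $V_{1}=\mathrm{Sp}(\tau)^{I(1)}\otimes\pi_{alg}$---is exactly the paper's. The execution, however, differs in several respects, and the paper's route is considerably shorter than the one you sketch.

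Rather than transporting the function model of \S\ref{section3} or invoking Fourier analysis, the paper works directly with an abstract basis $\{e_{1},e_{2},e_{0},f_{0}\}$ of $\mathrm{Sp}(\tau)^{I(1)}$ chosen so that $te_{1}=e_{0}$, $t^{2}e_{1}=e_{2}$, and $t^{2}e_{0}=\varepsilon\varpi^{\nu}f_{0}$ for a unit $\varepsilon$. The single numerical observation that drives the whole computation is that $\sum_{x}u_{x}s\cdot f_{0}=q^{2}e_{0}$ forces $\nu=-\mathrm{val}_{\varpi}(q)\leq -e\leq -4$; this bound means the non-trivial part of the $\mathfrak{K}_{0}$-action on the $(e_{0},f_{0})$-component is already absorbed by the initial lattice, so no Fourier-theoretic cancellation is needed. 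The paper then takes $L_{1}^{(0)}=(\mathcal{O}e_{1}\oplus\mathcal{O}e_{2}\oplus\mathcal{O}e_{0}\oplus\varpi^{\nu}\mathcal{O}f_{0})\otimes M_{1}$ with $M_{1}=(M_{0}+tM_{0})+t^{2}(M_{0}+tM_{0})$ (not $M_{0}$ itself), records the $t$- and $u_{x}s$-actions in a compressed ``exponent'' notation, and checks in two short steps that $L_{1}^{(2)}=L_{1}^{(1)}$. The computation decouples cleanly: on the $(e_{1},e_{2})$-half, $s$ merely swaps $X_{j}\leftrightarrow Y_{j}$; on the $(e_{0},f_{0})$-half, the $\nu\leq -4$ bound does all the work.

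Your proposed route---the containment involving $v_{x}t$-translates of $M_{0}$, the Fourier identities, stabilization only at $L_{1}^{(3)}$---would in principle succeed, but it entangles the smooth and algebraic data that the paper's choice of basis keeps separate, and it imports machinery (the Fourier transform on $\mathbb{F}_{q^{2}}$) that turns out to be unnecessary here. What your plan buys is uniformity with \S\ref{section3}; what the paper's buys is a two-line verification once $\nu\leq -4$ is in hand. (A small slip: the twist $\mathrm{det}^{-1/4}$ requires $\varpi_{F}^{1/4}\in E$, not just $\sqrt{\varpi_{F}}$; the paper uses this to get $4\mid e$.)
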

	\begin{proof}
	Recall from Lemma \ref{K(1)-invariants of St and Sp} that \begin{align*}
	    &\mathrm{Sp}(\tau)^{K(1)}=(\theta\circ\mathrm{det}(\overline{\hspace{0.5mm}\cdot\hspace{0.5mm}}))\oplus(\theta^{q}\circ\mathrm{det}(\overline{\hspace{0.5mm}\cdot\hspace{0.5mm}}))\oplus\mathrm{Ind}_{I}^{K}(\theta\otimes\theta^{q}),
	    \\&\mathrm{Sp}(\tau)^{I(1)}=(\theta\otimes\theta)\oplus(\theta^{q}\otimes\theta^{q})\oplus(\theta\otimes\theta^{q})\oplus(\theta^{q}\otimes\theta).
	\end{align*} Let $e_{1}$ be a non-zero vector in the underlying space of the character $\theta\circ\mathrm{det}(\overline{\hspace{0.5mm}\cdot\hspace{0.5mm}})$. Then $e_{2}:=t^{2}e_{1}$ and $e_{0}:=te_{1}$ are the non-zero vectors in the underlying spaces of the characters $\theta^{q}\circ\det$ and $\theta\otimes\theta^{q}$ of $K$ and $I$ respectively. Note that $(\theta\circ\mathrm{det}(\overline{\hspace{0.5mm}\cdot\hspace{0.5mm}}))|_{I}=\theta\otimes\theta$ and
	$(\theta^{q}\circ\mathrm{det}(\overline{\hspace{0.5mm}\cdot\hspace{0.5mm}}))|_{I}=\theta^{q}\otimes\theta^{q}$. The $K$-representation $\mathrm{Ind}_{I}^{K}Ee_{0}$ is stable under the action of $t^{2}$. Thus $t^{2}e_{0}=\varepsilon\varpi^{\nu}f_{0}$ for some $\varepsilon\in \mathcal{O}^{\times}$ and $\nu\in\mathbb{Z}$ where $f_{0}=\sum_{x}u_{x}se_{0}\in(\mathrm{Ind}_{I}^{K}Ee_{0})^{I(1)}$ is a function supported on $IsI(1)$. Let $q=\varepsilon'\varpi^{\nu'}$ where $\varepsilon'\in \mathcal{O}^{\times}$ and $\nu'\in\mathbb{Z}$. The evaluation of the $I(1)$-invariant function $\sum_{x}u_{x}sf_{0}$ on $1$ is $q^{2}$ and on $s$ is $0$. Thus $\sum_{x}u_{x}sf_{0}=q^{2}e_{0}$. Using that $t^{2}\circ(\sum_{x}u_{x}s)=(\sum_{x}u_{x}s)\circ t^{2}$ and that the action of $t^{4}$ is by multiplication by a unit, one obtains that $\nu=-\nu'$. 
	
	Consider the $K$-lattice \[M_{0}=\mathrm{Sym}^{1}\mathcal{O}^{4}\otimes\mathrm{det}^{-\frac{1}{4}}=\mathcal{O}X_{1}\oplus\mathcal{O}X_{2}\oplus\mathcal{O}Y_{1}\oplus\mathcal{O}Y_{2}\] in the representation $\mathrm{Sym}^{1}E^{4}\otimes\mathrm{det}^{-\frac{1}{4}}$. Then $M_{1}=(M_{0}+tM_{0})+t^{2}(M_{0}+tM_{0})$ is a $\mathfrak{K}_{1}$-lattice in $\mathrm{Sym}^{1}E^{4}\otimes\mathrm{det}^{-\frac{1}{4}}$. One computes that \begin{align*}
	    M_{1}&=\varpi_{F}^{\frac{-1}{4}}\mathcal{O}X_{1}\oplus\varpi_{F}^{\frac{-3}{4}}\mathcal{O}X_{2}\oplus\varpi_{F}^{\frac{0}{4}}\mathcal{O}Y_{1}\oplus\varpi_{F}^{\frac{-2}{4}}\mathcal{O}Y_{2}\\&=\varpi^{\frac{-e}{4}}\mathcal{O}X_{1}\oplus\varpi^{\frac{-3e}{4}}\mathcal{O}X_{2}\oplus\varpi^{0}\mathcal{O}Y_{1}\oplus\varpi^{\frac{-2e}{4}}\mathcal{O}Y_{2}\hspace{3mm}
	\end{align*} where $e$ is the ramification index $e(E/F)$.
	Note that $4$ divides $e$ because $E$ is taken large enough to contain $\varpi_{F}^{\frac{1}{4}}$. Hence $\nu'\geq e\geq 4$ and $\nu\leq -4$.
	
	Consider the following $\mathfrak{K}_{1}$-lattice  of $V_{1}$: \begin{align*}
	    L_{1}^{(0)}=L_{1}&:=(\varpi^{0}\mathcal{O}e_{1}\oplus\varpi^{0}\mathcal{O}e_{2}\oplus\varpi^{0}\mathcal{O}e_{0}\oplus\varpi^{\nu}\mathcal{O}f_{0})\otimes M_{1}.
	\end{align*}
For the ease of computation, we write $L_{1}^{(0)}$ as follows: \begin{align*}
		L_{1}^{(0)}&=(0e_{1}\oplus 0e_{2})\otimes(\tfrac{-e}{4}X_{1}\oplus\tfrac{-3e}{4}X_{2}\oplus0Y_{1}\oplus\tfrac{-2e}{4}Y_{2})\\&\oplus (0e_{0}\oplus\nu f_{0})\otimes(\tfrac{-e}{4}X_{1}\oplus\tfrac{-3e}{4}X_{2}\oplus0Y_{1}\oplus\tfrac{-2e}{4}Y_{2}).
	\end{align*} Let us record the actions of $u_{x}s$ and $t$:
\begin{align*}
&u_{x}s((ae_{1}\oplus be_{2})\otimes(n_{1}X_{1}\oplus n_{2}X_{2}\oplus m_{1}Y_{1}\oplus m_{2}Y_{2}))\\&=(ae_{1}\oplus be_{2})\otimes(n_{1}([x]X_{1}+Y_{1})\oplus n_{2}([x^{q}]X_{2}+Y_{2})\oplus m_{1}X_{1}\oplus m_{2}X_{2}),\\&t((ae_{1}\oplus be_{2})\otimes(n_{1}X_{1}\oplus n_{2}X_{2}\oplus m_{1}Y_{1}\oplus m_{2}Y_{2}))\\&=(ae_{0}\oplus(b+\nu) f_{0})\otimes((n_{1}+\tfrac{-e}{4})Y_{2}\oplus (n_{2}+\tfrac{3e}{4})Y_{1}\oplus (m_{1}+\tfrac{-e}{4})X_{1}\oplus (m_{2}+\tfrac{-e}{4})X_{2}),\\&t((ae_{0}\oplus bf_{0})\otimes(n_{1}X_{1}\oplus n_{2}X_{2}\oplus m_{1}Y_{1}\oplus m_{2}Y_{2}))\\&=((b-\nu) e_{1}\oplus ae_{2})\otimes((n_{1}+\tfrac{-e}{4})Y_{2}\oplus (n_{2}+\tfrac{3e}{4})Y_{1}\oplus (m_{1}+\tfrac{-e}{4})X_{1}\oplus (m_{2}+\tfrac{-e}{4})X_{2}).
\end{align*}
		We thus have
	\begin{align*}
	&\sum_{x}u_{x}s((ae_{0}\oplus bf_{0})\otimes(n_{1}X_{1}\oplus n_{2}X_{2}\oplus m_{1}Y_{1}\oplus m_{2}Y_{2}))\\&=  ((b-2\nu)e_{0}\oplus af_{0})\otimes((m_{1}-2\nu)X_{1}\oplus (m_{2}-2\nu)X_{2})\oplus(n_{1}-2\nu)Y_{1}\oplus (n_{2}-2\nu)Y_{2}.
	\end{align*}
	
	We now compute $L_{1}^{(1)}$.
	
	\noindent {\bf Step 1}: 
	We first compute that
	\begin{align*}
	    (\mathfrak{K}_{0}\cdot L_{1}^{(0)})\cap V_{1}&=(0e_{1}\oplus 0e_{2})\otimes(\tfrac{-e}{4}X_{1}\oplus\tfrac{-3e}{4}X_{2}\oplus\tfrac{-e}{4}Y_{1}\oplus\tfrac{-3e}{4}Y_{2})\\&\oplus(0e_{0}\oplus \nu f_{0})\otimes(\tfrac{-e}{4}X_{1}\oplus\tfrac{-3e}{4}X_{2}\oplus0Y_{1}\oplus\tfrac{-2e}{4}Y_{2}).
	\end{align*}
	In this computation, we used $s(e_{i}\otimes X_j)=e_{i}\otimes Y_{j}$ for the first half of the lattice $(\mathfrak{K}_{0}\cdot L_{1}^{(0)})\cap V_{1}$. That the second half of the lattice $(\mathfrak{K}_{0}\cdot L_{1}^{(0)})\cap V_{1}$ is the same as that of $L_{1}^{(0)}$ follows because $\nu\leq -4$ and thus the contribution from the action of $\sum_{x}u_{x}s$ is already in the lattice $L_{1}^{(0)}$. 
	
	\noindent {\bf Step 2}: 
	The action of $t$ on $ (\mathfrak{K}_{0}\cdot L_{1}^{(0)})\cap V_{1}$ gives 
	\begin{align*}
	    t((\mathfrak{K}_{0}\cdot L_{1}^{(0)})\cap V_{1})&=(0e_{0}\oplus \nu f_{0})\otimes(\tfrac{-2e}{4}Y_{2}\oplus0Y_{1}\oplus\tfrac{-2e}{4}X_{1}\oplus\tfrac{-4e}{4}X_{2})\\&\oplus(0e_{1}\oplus 0e_{2})\otimes(\tfrac{-2e}{4}Y_{2}\oplus0Y_{1}\oplus\tfrac{-e}{4}X_{1}\oplus\tfrac{-3e}{4}X_{2}). 
	\end{align*}
Therefore,
	 \begin{align*}
	  L_{1}^{(1)}&=(\mathfrak{K}_{0}\cdot L_{1}^{(0)})\cap V_{1}+t((\mathfrak{K}_{0}\cdot L_{1}^{(0)})\cap V_{1})\\&=(0e_{1}\oplus 0e_{2})\otimes(\tfrac{-e}{4}X_{1}\oplus\tfrac{-3e}{4}X_{2}\oplus\tfrac{-e}{4}Y_{1}\oplus\tfrac{-3e}{4}Y_{2})\\&\oplus(0e_{0}\oplus \nu f_{0})\otimes(\tfrac{-2e}{4}X_{1}\oplus\tfrac{-4e}{4}X_{2}\oplus0Y_{1}\oplus\tfrac{-2e}{4}Y_{2}).
	\end{align*}
	\vspace{1mm}
	
	It follows that 
	\begin{align*}
	  (\mathfrak{K}_{0}\cdot L_{1}^{(1)})\cap V_{1}&=(0e_{1}\oplus 0e_{2})\otimes(\tfrac{-e}{4}X_{1}\oplus\tfrac{-3e}{4}X_{2}\oplus\tfrac{-e}{4}Y_{1}\oplus\tfrac{-3e}{4}Y_{2})\\&\oplus(0e_{0}\oplus \nu f_{0})\otimes(\tfrac{-2e}{4}X_{1}\oplus\tfrac{-4e}{4}X_{2}\oplus0Y_{1}\oplus\tfrac{-2e}{4}Y_{2})=L_{1}^{(1)}. 
	  \end{align*}
	Hence $L_{1}^{(2)}=L_{1}^{(1)}$ and $\pi$ is integral by Corollary \ref{zigzag_criterion}.
	\end{proof}

\bibliography{integral_structures}
\bibliographystyle{amsalpha}

\end{document}